\pdfoutput=1
\documentclass[microtype]{gtpart}
\usepackage{graphicx}
\usepackage[mathscr]{eucal}
\usepackage{amssymb}
\usepackage[parfill]{parskip}
\usepackage[dvipsnames]{xcolor}
\usepackage{enumerate}
\usepackage{hyperref}
\usepackage[all]{xy}

%%%%%%%%%%%%%%%%%%%%%%%%%%%%%%%%%%%%%%%%%%%%%%
%  Begin user defined commands

\newcommand{\br}{\mathbb{R}}
\newcommand{\bc}{\mathbb C}
\newcommand{\bz}{\mathbb Z}
\newcommand{\bn}{\mathbb N}

\newcommand{\si}{\sigma}
\newcommand{\vp}{\varphi}

\newcommand{\sd}{\mathscr D}

\newcommand{\ssm}{\smallsetminus}
\newcommand{\into}{\hookrightarrow}

%\newcommand{\co}{\colon\thinspace}

 % Symmetric group
\DeclareMathOperator{\homeo}{Homeo}
\DeclareMathOperator{\Ends}{Ends} %Ends of a surface
\DeclareMathOperator{\genus}{genus}
\DeclareMathOperator{\Aut}{Aut}
\DeclareMathOperator{\mcg}{Map}
\DeclareMathOperator{\pmcg}{PMap}
\DeclareMathOperator{\out}{Out}

%  End user defined commands
%%%%%%%%%%%%%%%%%%%%%%%%%%%%%%%%%%%%%%%%%%%%%%

%%%%%%%%%%%%%%%%%%%%%%%%%%%%%%%%%%%%%%%%%%%%%%
% These establish different environments for stating Theorems, Lemmas, Remarks, etc.

\newtheorem{Thm}{Theorem}[section]
\newtheorem{Thm*}{Theorem}
\newtheorem{Prop}[Thm]{Proposition}
\newtheorem{Lem}[Thm]{Lemma}
\newtheorem{Cor}[Thm]{Corollary}
\newtheorem{Cor*}[Thm*]{Corollary}
\newtheorem{Conj}[Thm]{Conjecture}
\newtheorem{Question}[Thm]{Question}
\newtheorem{Problem}[Thm]{Problem}
\newtheorem*{Remark*}{Remark}

\theoremstyle{definition}
\newtheorem{Def}[Thm]{Definition}

%For commenting:

% End environments 
%%%%%%%%%%%%%%%%%%%%%%%%%%%%%%%%%%%%%%%%%%%%%%%

\title[Algebraic and topological properties of big mapping class groups]{Algebraic and topological properties \\ of big mapping class groups}

\author{Priyam Patel}
\address{Department of Mathematics, University of California \\ Santa Barbara, CA 93106}
\email{patel@math.ucsb.edu}

\author{Nicholas G. Vlamis}
\address{Department of Mathematics, University of Michigan \\ Ann Arbor, MI 48109}
\email{vlamis@umich.edu}

\begin{document}  

\begin{abstract}
Let $S$ be an orientable, connected surface with infinitely-generated fundamental group.
The main theorem states that if the genus of \( S \) is finite and at least 4, then the isomorphism type of the pure mapping class group associated to $S$, denoted $\pmcg(S)$, detects the homeomorphism type of $S$.  
As a corollary, every automorphism of $\pmcg(S)$ is induced by a homeomorphism, which extends a theorem of Ivanov from the finite-type setting.
In the process of proving these results, we show that $\pmcg(S)$ is residually finite if and only if $S$ has finite genus, demonstrating that the algebraic structure of $\pmcg(S)$ can distinguish finite- and infinite-genus surfaces. 
As an independent result, we also show that $\mcg(S)$ fails to be residually finite for any infinite-type surface $S$. 
In addition, we give a topological generating set for $\pmcg(S)$ equipped with the compact-open topology.
In particular, if $S$ has at most one end accumulated by genus, then $\pmcg(S)$ is topologically generated by Dehn twists, otherwise the Dehn twists along with handle shifts topologically generate.
\end{abstract}

\maketitle

%%%%%%%%%%%%%%%%
% Introduction
%%%%%%%%%%%%%%%%

\section{Introduction}

A surface is of finite type if its fundamental group is finitely generated; otherwise, it is of infinite type.
Throughout, all surfaces are assumed to be connected, orientable, and to have compact (possibly empty) boundary.

The \emph{mapping class group}, denoted $\mcg(S)$, of a surface $S$ is the group of orientation preserving homeomorphisms of $S$ up to isotopy, where we require all homeomorphisms to fix $\partial S$ pointwise. 
The algebraic and geometric structure of mapping class groups of finite-type surfaces is generally well understood. In contrast, very little is known about \emph{big} mapping class groups, i.e. mapping class groups of infinite-type surfaces.  

Big mapping class groups arise naturally from the study of group actions on surfaces (e.g. \cite{CalegariCircular}), taut foliations of 3-manifolds (e.g. \cite{CantwellEndperiodic}), and the Artinization of groups (e.g. \cite{FunarUniversal, FunarBraided}). 
(See \cite{CalegariBig2} for a detailed discussion of these connections.)
There has been a recent trend aimed at  understanding infinite-type surfaces and their mapping class groups.
For instance, recent work has investigated big mapping class group actions on hyperbolic graphs (e.g. \cite{BavardHyperbolic, AramayonaArc, DurhamGraphs}).

This article focuses on the algebraic structure of big mapping class groups.
Our methods use the language of topological groups and initiates the study of big mapping class groups in this category.
The motivation for our work is a type of algebraic rigidity question for mapping class groups:

\begin{Question} \label{ques:mainquestion}
Does the isomorphism type of $\mcg(S)$ determine the topology of $S$?
In particular, if $\mcg(S)$ is isomorphic to $\mcg(S')$, then are $S$ and $S'$ homeomorphic?
\end{Question}

\begin{Remark*}
Since the original submission of this article, Question \ref{ques:mainquestion} has been answered in the affirmative by Bavard--Dowdall--Rafi \cite{BavardIsomorphism}.
Their techniques are different than the those used in this article in the proof of Theorem \ref{thm:main}.
\end{Remark*}

Restricting to the finite-type setting -- with the exception of low-complexity cases -- this question has a positive answer.
To see this observe that the rank of the center of $\mcg(S)$, the virtual cohomological dimension of $\mcg(S)$ \cite{HarerVirtual}, and the rank of a maximal abelian subgroup in $\mcg(S)$ \cite{BirmanAbelian} together determine the genus, number of punctures, and the number of boundary components of $S$ and thus its topological type.

Observe that $\mcg(S)$ is countable if and only if $S$ is of finite type. Therefore, the isomorphism type of $\mcg(S)$ can distinguish between finite- and infinite-type surfaces.  
This reduces the question to considering specifically infinite-type surfaces, where little is known.
The invariants mentioned for the finite-type setting, with the exception of the rank of the center, are all infinite for big mapping class groups.  

In progress towards this question, we change our focus to a natural subgroup of $\mcg(S)$.
In particular, we will work with the \emph{pure mapping class group}, denoted $\pmcg(S)$, consisting of mapping classes acting trivially on the topological ends of $S$.
In this setting we address the same question and provide a partial answer:

\begin{Thm*}
\label{thm:main}
Let $S$ be a surface whose genus is finite and at least 4.
If $S'$ is a surface, then any isomorphism between $\pmcg(S)$ and $\pmcg(S')$ is induced by a homeomorphism.
\end{Thm*}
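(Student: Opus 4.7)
My plan is to reduce the theorem to Ivanov's classical finite-type rigidity via two complementary reductions, using the paper's residual-finiteness result to pin down the topological type of $S'$.

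\textbf{Step 1 (Force $S'$ to have finite genus).} The paper establishes that $\pmcg(S)$ is residually finite if and only if $S$ has finite genus. Residual finiteness is an isomorphism invariant, so any isomorphism $\phi\colon \pmcg(S) \to \pmcg(S')$ forces $\pmcg(S')$ to be residually finite, and hence $S'$ to have finite genus. In particular, no end of either surface is accumulated by genus, and $\pmcg(S)$ and $\pmcg(S')$ are both topologically generated by Dehn twists alone.

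\textbf{Step 2 (Algebraic characterization of Dehn twists).} Following the spirit of Ivanov, I would give a purely algebraic characterization of a Dehn twist in $\pmcg(S)$: for example, as an infinite-order element whose centralizer has prescribed structure (an infinite cyclic center generated by the element itself, virtual extensions controlled by the mapping class group of the complement, etc.). The hypothesis $\genus(S) \geq 4$ leaves ample room for this characterization to go through as in the finite-type setting, despite the presence of infinitely many ends. It follows that $\phi$ sends Dehn twists to Dehn twists, producing a $\phi$-equivariant bijection between isotopy classes of essential simple closed curves on $S$ and $S'$. Since two Dehn twists commute precisely when the underlying curves are disjoint or equal, this bijection is a simplicial isomorphism of curve graphs $\mathcal{C}(S) \to \mathcal{C}(S')$.

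\textbf{Step 3 (Assembling the homeomorphism).} Exhaust $S$ by finite-type compact subsurfaces $\Sigma_n$ of genus at least $3$ with connected complement. Each inclusion induces an injection $\mcg(\Sigma_n) \hookrightarrow \pmcg(S)$, and the image can be characterized algebraically as the subgroup generated by Dehn twists along a specific finite curve system. Consequently $\phi$ carries it isomorphically to the analogous subgroup attached to some compact finite-type $\Sigma_n' \subset S'$ supplied by Step~2. Ivanov's classical rigidity theorem then produces a homeomorphism $f_n\colon \Sigma_n \to \Sigma_n'$ inducing $\phi|_{\mcg(\Sigma_n)}$. Nesting of the $\Sigma_n$ gives compatibility of the $f_n$ up to isotopy, and the bijection on curves from Step~2 lets me promote this compatibility to an actual coherent system. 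Taking a direct limit yields a homeomorphism of $\bigcup_n \Sigma_n$ which, after matching end spaces, extends to $f\colon S \to S'$ inducing $\phi$.

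\textbf{Main obstacle.} I expect the hardest step to be the last one: upgrading the coherent homeomorphisms of compact subsurfaces to a genuine homeomorphism of $S$ and $S'$, which requires a bijection between their end spaces that is continuous in the topology of ends. Ends are not seen by any single Dehn twist, so one must extract them from the filtration of $\pmcg(S)$ by stabilizers of nested finite-type subsurfaces. Because $\phi$ is only an abstract isomorphism and is not known a priori to be continuous for the compact-open topology, verifying that the induced bijection on ends is a homeomorphism is the central technical difficulty, and is where the algebraic characterizations from Step~2 must be pushed hardest.
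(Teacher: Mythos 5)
Your skeleton (use residual finiteness to force $S'$ to have finite genus, show the isomorphism preserves Dehn twists, then assemble a homeomorphism from an exhaustion by finite-type subsurfaces) matches the paper's strategy in outline, but two steps are asserted rather than proved, and the first of these is precisely the point the paper had to invent new machinery for.

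First, Step 1 only gives you that $S'$ has \emph{finite} genus; it does not give you $\genus(S')=\genus(S)$, which you need before any finite-type rigidity can be applied. The paper closes this gap with Proposition \ref{thm:genus}: for each finite subset $\lambda$ of $\Ends(S')$ one composes with the forgetful map $\vp_\lambda$ and applies the Aramayona--Souto triviality criterion for homomorphisms from higher- to lower-genus finite-type mapping class groups, then uses the Hausdorff property of $\tau_w$ (Lemma \ref{lem:Hausdorff}) to conclude the whole composition kills $\pmcg(F)$ for a maximal-genus subsurface $F$. Without this, your Step 3 has no reason to produce subsurfaces $\Sigma_n'\subset S'$ homeomorphic to $\Sigma_n$.

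Second, and more seriously, Step 2 assumes that Ivanov's purely algebraic, centralizer-based characterization of Dehn twists ``goes through as in the finite-type setting.'' It is not known to, and the obvious versions fail: those characterizations lean on the maximal rank of free abelian subgroups of the centralizer (which is $3g-3+n$ in finite type but infinite here), on canonical reduction systems, and on the finiteness of the complement of a curve --- none of which survive when the centralizer of $T_c$ contains the big mapping class group of $S\ssm c$. This is exactly why the paper introduces the topology $\tau_w$: the characterization it actually proves (Proposition \ref{prop:characterization}) is that $f\in\overline{\sd}$ if and only if $\vp_\lambda(f)$ is a twist or trivial for every finite set of ends $\lambda$, which rests on computing the closure of $\sd$ in $\tau_w$ (Lemma \ref{lem:closure}); the fact that an isomorphism preserves this condition is then extracted not from centralizers but from Aramayona--Souto's theorem that nontrivial homomorphisms between finite-type mapping class groups of the relevant genera are induced by embeddings. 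Your Step 3 has a related soft spot: invoking ``Ivanov's classical rigidity'' for $\mcg(\Sigma_n)\to\mcg(\Sigma_n')$ requires first knowing that the target subsurface is homeomorphic to $\Sigma_n$ (same genus, boundary count, and puncture count); the paper spends most of Proposition \ref{prop:homeo} establishing exactly this, via centers of centralizers, Lemma \ref{lem:center}, and a theorem of Castel, before the direct-limit assembly you describe can be run. The end-space difficulty you flag at the end is, by contrast, handled almost for free once $\bigcup_n R_n=S'$ is known, since the direct limit of the inclusions then produces the homeomorphism.
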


If $S$ has empty boundary, then the \emph{extended} mapping class group, denoted $\mcg^\pm(S)$, is the degree-2 extension of $\mcg(S)$ that includes orientation-reversing mapping classes; otherwise, we set $\mcg^\pm(S) = \mcg(S)$.
Setting $S' = S$ in Theorem~\ref{thm:main} yields a generalization of a celebrated result of Ivanov for finite-type mapping class groups \cite{IvanovAutomorphismsT}: 

\begin{Cor*}
\label{cor:automorphism}
If $S$ is a borderless surface whose genus is finite and at least 4, then the natural monomorphism from $\mcg^\pm(S)$  to $\Aut(\pmcg(S))$ is an isomorphism.
\end{Cor*}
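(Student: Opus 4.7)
The plan is to derive this Corollary almost entirely from Theorem~\ref{thm:main} applied with $S'=S$, and then to verify by hand that the natural map $\mcg^\pm(S)\to\Aut(\pmcg(S))$ really is a monomorphism (as the statement asserts). Roughly: surjectivity is Theorem~\ref{thm:main}, while injectivity is an Alexander-method computation using that $\pmcg(S)$ contains enough Dehn twists.

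First I would check that the natural map is well defined. Because $\pmcg(S)$ is cut out of $\mcg(S)$ as the kernel of the action on $\Ends(S)$, it is characteristic in $\mcg(S)$ and in particular normal in $\mcg^\pm(S)$, so conjugation by any element of $\mcg^\pm(S)$ yields an automorphism of $\pmcg(S)$. For injectivity, suppose $f\in\mcg^\pm(S)$ lies in the kernel, so $fgf^{-1}=g$ for every $g\in\pmcg(S)$. Every Dehn twist $T_\gamma$ along a simple closed curve $\gamma\subset S$ belongs to $\pmcg(S)$, and $fT_\gamma f^{-1}=T_{f(\gamma)}$; since distinct isotopy classes of simple closed curves give distinct Dehn twists, $f$ must fix the isotopy class of every simple closed curve in $S$. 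An exhaustion of $S$ by finite-type subsurfaces, combined with the classical Alexander method applied on each such subsurface, then forces $f$ to be the identity (here the hypothesis $\genus(S)\ge 4$ is more than enough to guarantee each exhausting piece is complex enough for the Alexander method to apply).

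For surjectivity, let $\phi\in\Aut(\pmcg(S))$. Applying Theorem~\ref{thm:main} with $S'=S$ produces a homeomorphism $h\colon S\to S$ inducing $\phi$ via conjugation. Since $S$ is borderless, $h$ may be orientation-preserving or orientation-reversing, but in either case $[h]\in\mcg^\pm(S)$ and $[h]$ maps to $\phi$ under the natural map. Combined with the previous paragraph, this shows the natural map is an isomorphism.

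The only genuine obstacle here is the Alexander-method step for injectivity; all the heavy lifting is already concentrated in Theorem~\ref{thm:main}. I would expect this step to be routine but worth writing out carefully in the infinite-type setting, since one must justify passage to an exhaustion and check that fixing every curve class on each finite-type piece (and hence on all of $S$) indeed implies triviality of $f$.
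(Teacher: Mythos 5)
Your proof is correct and follows essentially the same route as the paper: surjectivity is exactly Proposition \ref{prop:homeo} (Theorem \ref{thm:main} with $S'=S$), and your injectivity step is the content of Lemmas \ref{lem:center2} and \ref{lem:hernandez} (the infinite-type Alexander method of Hern\'andez--Morales--Valdez), which the paper packages as the triviality of $Z(\pmcg(S))$ for borderless $S$ in Lemma \ref{lem:center}. One small correction: your claim that $\pmcg(S)$ is \emph{characteristic} in $\mcg(S)$ because it is the kernel of the action on ends does not follow (kernels of actions are normal, not characteristic, and this characteristic-ness is precisely the paper's open Conjecture); fortunately you only need that $\pmcg(S)$ is normal in $\mcg^\pm(S)$, which is immediate because the action on $\Ends(S)$ extends to all of $\mcg^\pm(S)$.
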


An essential aspect of the proof of Theorem \ref{thm:main}, just as in Ivanov's work, is to give a characterization of Dehn twists (Proposition \ref{prop:characterization}), which allows us to conclude that an isomorphism of pure mapping class groups must send Dehn twists to Dehn twists. To obtain this characterization, we introduce a non-standard topology $\tau_w$ on $\pmcg(S)$, described in detail in Section \ref{sec:initial}.
In Corollary \ref{cor:ends}, we discuss the structure of $\out(\pmcg(S))$; in particular, it is isomorphic to the automorphism group of a countable boolean algebra.

Due to Theorem \ref{thm:main}, it is natural for one to expect that $\pmcg(S)$ is a characteristic subgroup of $\mcg(S)$.
If this is the case, then Corollary \ref{cor:automorphism} can be extended to $\mcg(S)$ to again match the finite-type setting.
We state these as conjectures with the same assumptions as in Theorem \ref{thm:main}; however, we expect these conjectures and Corollary \ref{cor:automorphism} to hold in general.

\begin{Conj}
Let $S$ be a surface whose genus is finite and at least 4.
\begin{enumerate}
\item
$\pmcg(S)$ is a characteristic subgroup of $\mcg(S)$ and $\mcg^\pm(S)$.
\item
If \( S \) is borderless, then 
$\Aut(\mcg^\pm(S)) = \Aut(\mcg(S)) = \mcg^\pm(S)$
\end{enumerate}
\end{Conj}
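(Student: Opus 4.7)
The plan is to leverage Theorem \ref{thm:main} and Corollary \ref{cor:automorphism} by first reducing part (1) to an algebraic detectability statement for $\pmcg(S)$ inside $\mcg(S)$. The natural candidate is the $\tau_w$-closure of the subgroup generated by Dehn twists, which by the topological generation result equals $\pmcg(S)$ when $S$ has at most one end accumulated by genus, and the $\tau_w$-closure of the subgroup generated by Dehn twists together with handle shifts in general. The key step is to upgrade the characterization furnished by Proposition \ref{prop:characterization} so that it picks out Dehn twists inside the ambient group $\mcg(S)$, not just inside $\pmcg(S)$: I would attempt to show that $f \in \mcg(S)$ is a Dehn twist if and only if it satisfies the conditions of that proposition together with an auxiliary condition forcing it to act trivially on ends (for example, that every sufficiently deep finite-index normal subgroup contains a power of $f$, exploiting residual finiteness of $\pmcg(S)$). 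A parallel algebraic characterization of handle shifts as specific infinite-order, non-Dehn-twist elements whose commutators with prescribed families of Dehn twists exhibit controlled $\tau_w$-limits would then identify the full generating set, and hence $\pmcg(S)$ itself, as a characteristic subgroup of $\mcg(S)$ and $\mcg^\pm(S)$.

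For part (2), granting part (1), any $\phi \in \Aut(\mcg^\pm(S))$ restricts to an automorphism of $\pmcg(S)$, and by Corollary \ref{cor:automorphism} this restriction is conjugation by a unique $h \in \mcg^\pm(S)$. Composing $\phi$ with conjugation by $h^{-1}$ produces an automorphism of $\mcg^\pm(S)$ that is the identity on $\pmcg(S)$; such an automorphism sends each $g \in \mcg^\pm(S)$ to $g\,z(g)$ with $z(g)$ lying in the centralizer of $\pmcg(S)$ inside $\mcg^\pm(S)$. Showing this centralizer is trivial — by exhibiting Dehn twists along a filling curve system in $S$ whose joint centralizer in $\mcg^\pm(S)$ is trivial — finishes the equality $\Aut(\mcg^\pm(S)) = \mcg^\pm(S)$. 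The equality $\Aut(\mcg(S)) = \mcg^\pm(S)$ then follows once one verifies that $\mcg(S)$ is characteristic in $\mcg^\pm(S)$ (as the unique index-two subgroup containing every Dehn twist) and that every automorphism of $\mcg(S)$ extends uniquely to $\mcg^\pm(S)$ using the orientation double-cover structure.

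The principal obstacle is the first step: extending the Dehn-twist characterization from $\pmcg(S)$ to $\mcg(S)$. The topology $\tau_w$ is tailored to the pure mapping class group, and its defining filtrations exploit the end-fixing condition; in $\mcg(S)$ the end-permuting elements need not respect these filtrations, so the characterization must be recast in terms that are either intrinsically algebraic or refer only to end-invariant data (e.g.\ by replacing $\tau_w$ with a canonical topology pulled back from a cover on which the end action is trivialized). A related secondary difficulty is supplying a genuinely algebraic characterization of handle shifts, which are currently defined in terms of the end structure and must be separated from the many other infinite-order non-Dehn-twist elements in $\pmcg(S)$ without implicit appeal to that structure.
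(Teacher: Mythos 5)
The statement you are proving is labelled as a \emph{Conjecture} in the paper: the authors explicitly do not prove it, remarking only that it was subsequently established (in greater generality) by Bavard--Dowdall--Rafi. So there is no proof in the paper to compare against, and the question is simply whether your proposal constitutes a proof. It does not: by your own account the ``principal obstacle'' --- extending the algebraic characterization of Dehn twists from $\pmcg(S)$ to all of $\mcg(S)$, or otherwise showing that an abstract automorphism of $\mcg(S)$ must preserve the kernel of the action on ends --- is left open. That is precisely the content of part (1), so the proposal is a research plan rather than an argument.

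Two further concrete problems with the plan as written. First, under the hypotheses of the conjecture $S$ has finite genus, hence no end is accumulated by genus, so handle shifts do not exist on $S$; the entire program of algebraically characterizing handle shifts is vacuous here, and by Theorem \ref{thm:dense} (or Lemma \ref{lem:generate1}) Dehn twists alone topologically generate. Second, and more seriously: even granting an automorphism-invariant characterization of the set $\sd$ of Dehn twists inside $\mcg(S)$, this only shows that $\pmcg_c(S)=\langle\sd\rangle$ is characteristic --- which is already automatic, since $\sd$ is closed under conjugation --- and $\pmcg_c(S)$ is a proper subgroup of $\pmcg(S)$. To recover $\pmcg(S)$ you take a closure in $\tau_w$ or $\tau_q$, but a topological closure is preserved by an abstract group automorphism only if the topology itself is shown to be determined by the group structure of $\mcg(S)$; you do not address this, and it is a genuine additional gap beyond the one you flag. (The proposed auxiliary condition via finite-index normal subgroups also has no evident connection to the end action, since the end-permuting elements you need to exclude can perfectly well survive in finite quotients.) Part (2), granting part (1), is the standard center-of-centralizer argument and is essentially correct: the centralizer of $\pmcg(S)$ in $\mcg^\pm(S)$ is trivial because an element commuting with every Dehn twist fixes every isotopy class of simple closed curve, hence is trivial by Lemma \ref{lem:hernandez} when $S$ is borderless.
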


(This conjecture has been subsequently proven -- in greater generality -- by Bavard--Dowdall--Rafi \cite{BavardIsomorphism}.)

The first step in proving Theorem \ref{thm:main} is to distinguish finite- and infinite-genus surfaces via the algebraic structure of their associated pure mapping class groups.
In order to do this, we investigate whether big mapping class groups are residually finite.
As these results are of independent interest, we include Theorem \ref{thm:rf} below as a summary of the results in Section \ref{sec:residual}.
Recall that a group is \emph{residually finite} if the intersection of its proper finite-index normal subgroups is trivial.

\begin{Thm*}
\label{thm:rf}
Let $S$ be any surface.
\begin{enumerate}
\item
$\pmcg(S)$ is residually finite if and only if $S$ has finite genus.  
\item
$\mcg(S)$ is residually finite if and only if $S$ is of finite type.
\end{enumerate}
\end{Thm*}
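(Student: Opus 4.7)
The statement decomposes into four implications. The backward direction of (2) is Grossman's classical theorem that finite-type mapping class groups are residually finite. Since residual finiteness passes to subgroups and $\pmcg(S) \leq \mcg(S)$, the failure of residual finiteness for $\pmcg(S)$ in the infinite-genus case gives the same for $\mcg(S)$; hence the infinite-genus part of the forward direction of (2) is automatic, and only the finite-genus-but-infinite-type case requires a separate argument. Thus the work reduces to three claims: (A) finite genus implies $\pmcg(S)$ is residually finite; (B) infinite genus implies $\pmcg(S)$ is not residually finite; and (C) finite genus together with infinite type implies $\mcg(S)$ is not residually finite.

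For (A), the plan is to exhaust $S$ by compact finite-type subsurfaces $S_1 \subset S_2 \subset \cdots$, each containing all the genus of $S$. Since the genus is finite, distinct complementary components of $S_n$ carry distinct clopen sets of ends, so any $\vp \in \pmcg(S)$ admits a representative preserving each complementary component. Collapsing those components to punctures produces a finite-type surface $\Sigma_n$ and a homomorphism $\Phi_n : \pmcg(S) \to \pmcg(\Sigma_n)$. Any nontrivial $\vp$ moves some simple closed curve $\gamma$; choosing $n$ so that $S_n$ contains both $\gamma$ and $\vp(\gamma)$ yields $\Phi_n(\vp) \neq 1$, because non-isotopic curves in $S_n$ remain non-isotopic after capping. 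Hence $\pmcg(S)$ embeds into $\prod_n \pmcg(\Sigma_n)$, a product of residually finite groups by Grossman's theorem, and is therefore residually finite.

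For (B), let $S$ have infinite genus, fix a handle shift $h \in \pmcg(S)$ supported on a $\bz$-indexed chain of handles, and choose pairwise disjoint separating simple closed curves $\{\alpha_n\}_{n \geq 0}$ in the chain with $h(\alpha_n) = \alpha_{n+1}$. Because the supports of the $T_{\alpha_n}$ eventually leave every compact set, the infinite product $P = \prod_{n \geq 0} T_{\alpha_n}$ converges in the compact-open topology to an element of $\pmcg(S)$, and $T_{\alpha_0} = P(hPh^{-1})^{-1} = [P,h]$. Given a finite-index normal subgroup $N \triangleleft \pmcg(S)$, I would invoke automatic continuity for the Polish group $\pmcg(S)$ to conclude that $N$ is open and the quotient map to $\pmcg(S)/N$ is continuous; this forces $\bar T_{\alpha_k} = 1$ for all $k$ sufficiently large, since the partial products of $P$ must stabilize in the discrete finite quotient. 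Because the $\bar T_{\alpha_n}$ form a single periodic $\bar h$-conjugation orbit in the finite quotient, $\bar T_{\alpha_n} = 1$ for all $n$, so $T_{\alpha_0} \in N$; as $T_{\alpha_0} \neq 1$ in $\pmcg(S)$, this shows $\pmcg(S)$ is not residually finite.

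For (C), suppose $S$ has finite genus and infinite type, so $\Ends(S)$ is infinite and every end is planar. Standard end-classification arguments produce a sequence of pairwise disjoint, pairwise homeomorphic open neighborhoods $\{U_n\}$ in $S$ accumulating on some end, with fixed identifications between them; finite-support permutations of the indices lift to homeomorphisms of $S$ that permute the $U_n$'s accordingly and act as the identity elsewhere. This yields an embedding $\sym_\infty \hookrightarrow \mcg(S)$ of the finite-support infinite symmetric group; since $\sym_\infty$ contains the infinite simple alternating group $A_\infty$, every finite quotient of $\sym_\infty$ kills $A_\infty$, so $\sym_\infty$ is not residually finite and neither is $\mcg(S)$. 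The main obstacle throughout is step (B), where controlling an arbitrary finite-index normal subgroup depends on the automatic continuity input for Polish groups; a direct algebraic replacement via explicit relations among handle-shifted Dehn twists in finite quotients would be required if that input were unavailable.
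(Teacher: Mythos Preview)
Your decomposition and parts (A) and (C) are fine. For (A) you recover the paper's argument: forgetful homomorphisms to finite-type pure mapping class groups separate points, and Grossman finishes. (One slip: not every nontrivial element moves a curve---Dehn twists about boundary components are central---but those survive under your $\Phi_n$, so the conclusion is unaffected.) For (C) you embed $\sym_\infty$ instead of the paper's $B_\infty$; both contain an infinite simple subgroup, so either works, and yours sidesteps the braid-relation computation.

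Part (B), however, has two real gaps. First, handle shifts require at least two ends accumulated by genus, so your construction says nothing about, for instance, the Loch Ness monster surface. Second, the step ``invoke automatic continuity'' is doing all the work and is not free: that every finite-index normal subgroup of $\pmcg(S)$ is open is a substantial theorem that was not available when this paper was written, and even later results do not cover $\pmcg(S)$ uniformly for all infinite-genus $S$. Without it you have no control over how an \emph{abstract} finite quotient sees your infinite product $P$, and the argument collapses. The paper bypasses both problems: it shows that the compactly supported subgroup $\pmcg_c(S)$ has \emph{no} nontrivial finite quotients, using Paris's theorem that the minimal index of a proper subgroup of a genus-$g$ mapping class group tends to infinity with $g$. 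Since $\pmcg_c(S)$ is the direct limit over an exhaustion by subsurfaces of unbounded genus, any putative finite-index proper normal subgroup would meet each stage in a proper subgroup of bounded index, contradicting Paris. This is purely algebraic and needs no topology on $\pmcg(S)$.
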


Note that the finite-type cases are handled by Grossman \cite{GrossmanResidual}. Using the structure of $\tau_w$, we show that $\pmcg(S)$ is residually finite whenever $S$ has finite genus (Proposition~\ref{prop:fgrf}). 
We then introduce the infinite-stranded braid group $B_\infty$, show that it fails to be residually finite (Corollary \ref{cor:braid}), and prove that it embeds in every finite-genus big mapping class group (Proposition \ref{prop:braid}). Thus, $\mcg(S)$ is not residually finite when $S$ is of infinite type and has finite genus.
Proposition \ref{prop:no} shows that $\pmcg_c(S)$, the subgroup of $\mcg(S)$ consisting of mapping classes with compact support, has no finite quotients when $S$ has infinite genus. This fact is almost immediate from a theorem of Paris \cite{ParisSmall} and implies that when $S$ has infinite genus, $\mcg(S)$ and $\pmcg(S)$ are not residually finite.

The reason $\pmcg(S)$ of a finite-genus surface behaves differently than other big mapping class groups is the existence of forgetful homomorphisms to finite-type pure mapping class groups.
We use these forgetful homomorphisms to build the topology $\tau_w$ on $\pmcg(S)$  mentioned above (see Section~\ref{sec:initial}).
Understanding basic topological properties of $\tau_w$ is the key to characterizing Dehn twists and understanding the residual properties of $\pmcg(S)$.
The construction of $\tau_w$ naturally leads to an inverse limit of finite-type pure mapping class groups, which is the completion (as uniform spaces) of the associated big pure mapping class group.
This inverse limit viewpoint is thoroughly discussed in Section \ref{sec:inverse}.

As Dehn twists and the topology of $\tau_w$ play a critical role in our understanding of $\pmcg(S)$, it is natural to understand which of these properties hold in the more standard compact-open topology.
Equipping the group of homeomorphisms of a surface with the compact-open topology, we  give $\mcg(S)$ the corresponding quotient topology.
We let $\tau_q$ denote the restriction of this topology to $\pmcg(S)$.  
Note that if $S$ is of finite type, then $\tau_q$ is the discrete topology.
From the definition of $\tau_w$ and the Dehn-Lickorish Theorem, it follows that Dehn twists topologically generated $(\pmcg(S),\tau_w)$.

In the following theorem, we see that Dehn twists topologically generated \( (\pmcg(S), \tau_q) \) if \( S \) is finite genus; however, this can fail in the infinite-genus setting.
Intuitively, a topological end of a surface is \emph{accumulated by genus} if there is a sequence of handles converging to the end (see Section \ref{sec:ends} for the precise definition).
In order to build a topological generating set in the infinite-genus setting we introduce the notion of a \emph{handle shift} in Section \ref{sec:cpt}.
Roughly speaking, a handle shift is a homeomorphism which applies a \( +1 \) shift to a \( \bz \)-indexed collection of handles in the surface.
This notion allows us to build a topological generating set for \( (\pmcg(S), \tau_q) \):

\begin{Thm*}\label{thm:dense}
The set of Dehn twists topologically generate $(\pmcg(S), \tau_q)$ if and only if $S$ has at most one end accumulated by genus.
If $S$ has at least two ends accumulated by genus, then the set of Dehn twists together with the set of handle shifts topologically generate $(\pmcg(S), \tau_q)$. 
\end{Thm*}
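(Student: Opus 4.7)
The topology $\tau_q$ has a neighborhood basis at the identity consisting of the sets
$V_K = \{[\vp] \in \pmcg(S) : \vp \text{ has a representative that is the identity on } K\}$,
indexed by compact subsurfaces $K \subset S$. Consequently, a subset $X \subset \pmcg(S)$ topologically generates $(\pmcg(S), \tau_q)$ if and only if for every $f \in \pmcg(S)$ and every compact $K \subset S$ there is a finite product $g$ of elements of $X \cup X^{-1}$ with $g^{-1} f \in V_K$.

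\textbf{Sufficient directions.} In both cases, the plan is to choose a compact subsurface $\Sigma \supseteq K$ with incompressible boundary that is preserved by $f$ up to isotopy. Then $f|_\Sigma$ defines an element of the pure mapping class group of $\Sigma$, which by the classical Dehn--Lickorish theorem is a product of Dehn twists along curves in $\Sigma$; viewed in $\pmcg(S)$, these twists yield the desired $g$. If $S$ has at most one end accumulated by genus, such a $\Sigma$ always exists: take $\Sigma$ to contain $K$ together with every handle of $S$ lying outside a fixed neighborhood of the (at most one) accumulated end. Since $f$ fixes each end pointwise and the complementary components of $\Sigma$ are classified by their end sets and topological types, $f$ must preserve $\Sigma$ setwise up to isotopy. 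If instead $S$ has at least two ends accumulated by genus, $f$ may transport handles between them and no such $\Sigma$ need be $f$-invariant. To remedy this, for each ordered pair of ends accumulated by genus compose $f$ with an appropriate integer power of the handle shift between those ends so as to cancel the net transport of handles; the corrected mapping class now preserves some compact $\Sigma \supseteq K$ up to isotopy, and the argument concludes as before.

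\textbf{Necessary direction.} Suppose $S$ has at least two ends $e, e'$ accumulated by genus, and let $h$ be a handle shift between them. The plan is to construct a $\tau_q$-continuous homomorphism $\Phi : \pmcg(S) \to \bz$ vanishing on every Dehn twist but nonzero on $h$. We take $\Phi$ to be a ``flux of genus'' across a simple closed curve $\alpha \subset S$ separating $e$ from $e'$: informally, $\Phi(f)$ is the signed count of handles lying in the region cobounded by $\alpha$ and $f(\alpha)$. Every Dehn twist is supported in an annular neighborhood and so contributes only genus-zero pieces between $\alpha$ and its image, whereas $h$ transports exactly one handle across $\alpha$, giving $\Phi(h) = \pm 1$. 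Continuity of $\Phi$ follows because $\Phi(f)$ depends only on the behavior of $f$ on a compact neighborhood of $\alpha$. Hence the $\tau_q$-closure of the subgroup generated by Dehn twists lies in $\ker \Phi$, which is proper, so Dehn twists alone cannot topologically generate.

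\textbf{Main obstacle.} The delicate point is the rigorous construction of $\Phi$: one must verify that it is well-defined on isotopy classes, additive under composition, $\tau_q$-continuous, and zero on every Dehn twist. A secondary technical issue, on the sufficient side, is verifying that a finite sequence of handle-shift corrections truly reduces us to an $f$-invariant compact subsurface; this relies on a careful change-of-coordinates argument for infinite-type surfaces together with the classification of surfaces by end spaces.
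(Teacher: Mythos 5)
Your sufficiency argument follows the same broad architecture as the paper's (reduce to compact pieces, apply Dehn--Lickorish, use handle shifts to fix genus transport), but it has a genuine gap at its central step. You assert that because the complementary components of $\Sigma$ and of $f(\Sigma)$ have the same end sets and topological types, $f$ preserves $\Sigma$ setwise up to isotopy. This confuses the change-of-coordinates principle with isotopy: matching complementary data only places $\Sigma$ and $f(\Sigma)$ in the same $\pmcg(S)$-orbit. For a concrete failure, let $f$ be a Dehn twist about a curve meeting $\partial \Sigma$ essentially; then $f(\partial\Sigma)$ intersects $\partial\Sigma$ and is not isotopic to it, even though $S\smallsetminus f(\Sigma)$ is homeomorphic to $S\smallsetminus\Sigma$ and carries the same ends. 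The paper never asks $f$ to preserve a compact piece: it fixes a pants decomposition, and for each boundary curve $a$ of the next pair of pants it chooses a finite-type essential subsurface $F$ containing both $a$ and $f(a)$ and applies change of coordinates \emph{inside} $F$ to produce $g_a\in\pmcg(F)<\pmcg_c(S)$ with $g_a(a)=f(a)$; iterating gives $g_n\in\pmcg_c(S)$ with $g_n^{-1}\circ f$ equal to the identity on an exhausting compact $R_n$. The hypothesis of at most one end accumulated by genus is used exactly to guarantee the two sides of $a$ and of $f(a)$ in $F$ have equal genus, and handle shifts enter precisely when this fails. Your handle-shift correction is the right idea for that last point, but it must be combined with this curve-by-curve compactly supported correction rather than with an (unavailable) $f$-invariant compact $\Sigma$.

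For the necessity direction your ``flux of genus'' homomorphism is a genuinely different route from the paper's, and the idea is sound in principle (continuous homomorphisms $\pmcg(S)\to\mathbb{Z}$ dual to handle shifts were constructed in later work), but as written it is not a proof: when $\alpha$ and $f(\alpha)$ intersect there is no ``region cobounded by $\alpha$ and $f(\alpha)$,'' and the well-definedness, additivity, and continuity you defer are exactly the hard content. The paper's argument is far more elementary: fix a separating curve $a$ with both complementary components of infinite genus; any $f\in\pmcg_c(S)$ is supported in a finite-type $F$, and since the corresponding sides of $a$ and of $f(a)$ in $F$ have equal genus, either $[f(a)]=[a]$ or $i([a],[f(a)])>0$, whereas a handle shift $h$ satisfies $h(a)=b$ with $b$ disjoint from and not isotopic to $a$; hence no element of $\pmcg_c(S)$ can agree with $h$ on the compact set $a$, and $\pmcg_c(S)$ is not dense. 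You should either carry out the flux construction in full or adopt this direct intersection-number argument.
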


\subsection{Outline}
In Section \ref{sec:background} we give the necessary background, which focuses on the structure of the space of ends of an infinite-type surface.

In Section \ref{sec:initial}, we define the topology $\tau_w$, show it is Hausdorff (Lemma \ref{lem:Hausdorff}), and investigate the closure of the set of Dehn twists (Lemma \ref{lem:closure}), allowing us to characterize Dehn twists (Proposition \ref{prop:characterization}).
Section \ref{sec:residual} is dedicated to investigating residual finiteness.

Given the results of Section \ref{sec:residual}, Section \ref{sec:automorphisms} focuses on finite-genus surfaces.  In this section, we combine the characterization of Dehn twists from Section \ref{sec:initial} and results regarding homomorphisms between finite-type mapping class groups to conclude the proof of Theorem \ref{thm:main} in Propositions \ref{thm:genus} and \ref{prop:homeo}.

In Section \ref{sec:cpt}, we introduce the notion of a handle shift and prove Theorem \ref{thm:dense}.
In Section \ref{sec:inverse}, we discuss inverse limits of finite-type pure mapping class groups and finish the paper by showing the proper containment $\tau_w \subset \tau_q$  in Subsection \ref{sec:comparing}.

\section*{Acknowledgements}

The authors would like to thank Javier Aramayona, Ara Basmajian, Daryl Cooper, David Fern\'andez-Bret\'on, Chris Leininger, Darren Long, and Scott Schneider for several helpful conversations.

The second author was supported in part by NSF RTG grant 1045119.

%%%%%%%%%%%%%%%%
% Background
%%%%%%%%%%%%%%%%

\section{Background}\label{sec:background}

\subsection{Curves}
A \emph{simple closed curve} in a surface $S$ is the image of a topological embedding $\mathbb{S}^1 \hookrightarrow S$.
A simple closed curve is \emph{trivial} if it is homotopic to a point; it is \emph{peripheral} if it is either homotopic to a boundary component or bounds a once-punctured disk in \( S \); it is \emph{essential} if it is neither trivial nor peripheral; it is \emph{separating} if its complement is disconnected and \emph{nonseparating} otherwise.
If $c$ is a simple closed curve in $S$, then we let $[c]$ denote its homotopy class in $S$.
Given two homotopy classes of simple closed curves $[c]$ and $[d]$, their \emph{geometric intersection number}, denoted $i([c],[d])$, is defined to be 
\[
i([c],[d]) = \min \{|c' \cap d'| \co [c'] = [c] \text{ and } [d']=[d]\}.
\]
When is it clear from context, we will conflate a simple close curve with its isotopy class.

Recent work of Hern\'andez--Morales--Valdez \cite{Hernandez} assures us that, just as in the finite-type setting, simple closed curves play an essential role in the study of big mapping class groups.
In particular, they extend the Alexander method to infinite-type surfaces \cite[Theorem 1.1]{Hernandez}, which yields the following lemma:

\begin{Lem}[{\cite[Corollary 1.2]{Hernandez}}]
\label{lem:hernandez}
Suppose $S$ is an infinite-type surface without boundary.
If an element of $\mcg^\pm(S)$ fixes the isotopy class of every simple closed curve, then it is trivial.
\end{Lem}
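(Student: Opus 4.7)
The plan is to reduce to the classical Alexander method via a compact exhaustion. Let $\varphi \in \mcg^\pm(S)$ fix the isotopy class of every simple closed curve, and fix a representative homeomorphism $f$; the goal is to show that $f$ is isotopic to the identity.

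First, I would fix a locally finite pants decomposition $\{c_i\}_{i \in \bn}$ of $S$, which exists because $S$ is infinite-type with empty boundary, together with a locally finite family of auxiliary essential ``seam'' curves $\{d_j\}$ so that $\mathcal C = \{c_i\} \cup \{d_j\}$ \emph{fills} $S$ in the strong sense that each complementary component is a topological disk or a once-punctured disk. Writing $S$ as an increasing union of compact subsurfaces $\Sigma_1 \subset \Sigma_2 \subset \cdots$, each built from finitely many pants with $\partial \Sigma_n \subset \{c_i\}$, provides the combinatorial scaffolding for a finite-type argument on each piece.

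Next, since $\varphi$ fixes every $[c_i]$, an inductive bigon-criterion argument along the nested exhaustion lets one isotope $f$ so that $f(c_i) = c_i$ for every $i$ simultaneously; local finiteness of $\{c_i\}$ guarantees that the adjustments on successive annular neighborhoods do not accumulate, so the limit is a well-defined homeomorphism isotopic to $f$. After this adjustment, $f(\Sigma_n) = \Sigma_n$, and $f|_{\Sigma_n}$ is a homeomorphism of a finite-type surface that fixes the isotopy class of every curve in $\mathcal C \cap \Sigma_n$. For $n$ large, the curves in $\mathcal C \cap \Sigma_n$ fill $\Sigma_n$ in the finite-type sense, so the classical Alexander method implies $f|_{\Sigma_n}$ is isotopic rel $\partial \Sigma_n$ to the identity.

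Finally, I would assemble these local isotopies into a single global isotopy. Choosing the isotopies inductively so the one on $\Sigma_{n+1}$ extends the one on $\Sigma_n$, and supporting the $n$th adjustment inside $\Sigma_{n+1} \ssm \Sigma_{n-1}$, produces a sequence of homeomorphisms isotopic to $f$ that is eventually equal to $\mathrm{id}$ on any compact set, hence converges to $\mathrm{id}$ in the compact-open topology; a diagonal reparametrization yields an isotopy from $f$ to the identity.

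The main obstacle is precisely this last packaging step: infinitely many local Alexander isotopies must be made mutually compatible near each end of $S$, and one needs their supports to not accumulate. Local finiteness of $\mathcal C$ and the disk/once-punctured-disk structure of the complementary regions is the crucial ingredient that enables the gluing, and it is the technical heart of the extension of the Alexander method carried out by Hern\'andez--Morales--Valdez.
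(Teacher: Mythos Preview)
The paper does not give a proof of this lemma; it is quoted verbatim as Corollary~1.2 of Hern\'andez--Morales--Valdez \cite{Hernandez} and used as a black box. So there is no ``paper's own proof'' to compare against. What you have written is not an alternative to the paper's argument but rather a sketch of the Hern\'andez--Morales--Valdez proof itself, and you acknowledge as much in your final paragraph.

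As a sketch, the outline is broadly correct---exhaust by finite-type pieces bounded by pants curves, straighten $f$ along a locally finite filling system, apply the finite-type Alexander method on each piece, and glue the resulting isotopies---but several steps are under-specified. First, when you restrict to $\Sigma_n$ you only know $f$ fixes each $c_i$ \emph{setwise}; you have not arranged that $f$ fixes $\partial\Sigma_n$ pointwise, so the version of the Alexander method you need on $\Sigma_n$ must use arcs (the restrictions of the seam curves $d_j$) rather than closed curves, and you must separately dispose of possible Dehn twists about the boundary components. Second, the curves $d_j$ restrict to arcs on $\Sigma_n$, not closed curves, so ``$\mathcal C\cap\Sigma_n$ fills $\Sigma_n$ in the finite-type sense'' requires the arc-and-curve form of the Alexander method, and you should check the no-triple-points and pairwise-minimal-position hypotheses. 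Third, you do not address the orientation-reversing case in $\mcg^\pm(S)$; one must first argue that an orientation-reversing homeomorphism cannot fix every isotopy class of simple closed curve. None of these is a fatal gap, but each needs a sentence or two of actual work.
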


We will need one final notion: If $F \subset S$ is a subsurface, then we call $F$ \emph{essential} if it is connected and the inclusion $F\hookrightarrow S$ induces a monomorphism $\pmcg(F) \hookrightarrow \pmcg(S)$, where a homeomorphism of $F$ is extended to $S$ by the identity.

\subsection{Ends}
\label{sec:ends}
In order to work with infinite-type surfaces, it is essential to understand their space of ends, which we now define.

\begin{Def}
Let $S$ be an infinite-type surface and fix an exhaustion $\{K_i\}_{i\in \bn}$ of $S$ by compact subsurfaces, that is, each $K_i$ is a compact surface of $S$, $K_i \subset K_{i+1}$, and $\bigcup_{i\in \bn} K_ i = S$. 

\begin{itemize}
\item An \emph{end of $S$} is a sequence $\{U_i\}_{i\in \bn}$ such that each $U_i$ is a connected component of $S \ssm K_i$ and $U_{i+1} \subset U_i$ for all $i\in \bn$. 

\item Define $U^*_i$ to be the set of all ends that contain $U_i$, where $U_i$ is a connected component of $S \ssm K_i$. The \emph{space of ends} of $S$, denoted $\Ends(S)$, consists -- as a set -- of the ends of $S$. It is equipped with the topology generated by sets of the form $U^*_i$.
\end{itemize}
\end{Def}

The space $\Ends(S)$ is totally disconnected, separable, and compact (see \cite[Chapter 1, \S36 \& \S37]{AhlforsRiemann}); hence, it can be realized as a closed subset of the Cantor set.
An end of $S$ comes in two types: 

\begin{itemize}
\item An end $\{U_i\}$ is \emph{accumulated by genus} if $U_i$ has infinite genus for all $i$. 
\item An end $\{U_i\}$ is \emph{planar} if $U_i$ is planar for all but finitely many $i$.
\end{itemize}

In addition, an end is \emph{isolated} if it is an isolated point in the topology given above.
We will use the word \emph{puncture} to denote an isolated planar end.

Ker\'{e}kj\'{a}rt\'{o} (see \cite{RichardsClassification}) showed that the homeomorphism type of an orientable infinite-type surface is determined by the quadruple 
\[
(g, b,  \Ends_\infty(S), \Ends(S)),
\]
where $g\in \mathbb{N}\cup \{0,\infty\}$ is the genus of $S$, $b \in \bn$ is the number of boundary components, and $\Ends_\infty(S)$ is the subset of $\Ends(S)$ accumulated by genus. 

An immediate consequence of the classification of infinite-type surfaces is:

\begin{Lem}[Definition of $\overline S$]
When $S$ has finite genus, there exists a unique -- up to homeomorphism -- compact surface, which we label $\overline S$, containing $S$ such that $\overline S \ssm S$ is homeomorphic to $\Ends(S)$.
\end{Lem}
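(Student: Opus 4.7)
The plan is to deduce this from Ker\'{e}kj\'{a}rt\'{o}'s classification, cited in the excerpt: since $S$ has finite genus, the set $\Ends_\infty(S)$ is empty, and so the homeomorphism type of $S$ is determined by the triple $(g, b, \Ends(S))$, where $g$ is the genus and $b$ the number of boundary components.

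For existence, let $\Sigma_{g,b}$ denote the unique compact orientable surface of genus $g$ with $b$ boundary components. Because $\Ends(S)$ is compact, totally disconnected, and separable, it embeds as a closed subset of the Cantor set; composing with the standard embedding of the Cantor set in a disk, I obtain a closed embedding of $\Ends(S)$ into $\mathrm{int}(\Sigma_{g,b})$ whose image I call $C$. I then set $S' := \Sigma_{g,b} \ssm C$ and verify that $S'$ has the same Ker\'{e}kj\'{a}rt\'{o} invariants as $S$: genus $g$ (since removing a $0$-dimensional subset does not affect genus), $b$ boundary components (since $C \subset \mathrm{int}(\Sigma_{g,b})$), no ends accumulated by genus (as the genus is finite), and end-space homeomorphic to $C$. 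The last item is the main content: I would construct an exhaustion of $S'$ by compact subsurfaces each of whose complementary components is the intersection of $S'$ with a small closed disk neighborhood of a clopen piece of $C$, thereby exhibiting a bijection between ends of $S'$ and points of $C$ that is a homeomorphism by tracking how the clopen pieces of $C$ shrink. Applying Ker\'{e}kj\'{a}rt\'{o}'s classification then yields $S \cong S'$, and setting $\overline{S} := \Sigma_{g,b}$ realizes $S$ as a subspace of $\overline{S}$ of the required form.

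For uniqueness, let $\overline{S}$ and $\overline{S}'$ be any two compact surfaces satisfying the conclusion. I would first show $\overline{S} \ssm S$ lies in $\mathrm{int}(\overline{S})$: if some $p \in \overline{S} \ssm S$ were in $\partial \overline{S}$, then $\partial S = \partial \overline{S} \ssm (\overline{S} \ssm S)$ would be a proper open subset of the disjoint union of circles $\partial \overline{S}$, and hence noncompact, contradicting the standing assumption that $\partial S$ is compact. Thus $\partial \overline{S} = \partial S$, so $\overline{S}$ has $b$ boundary components; and removing a closed $0$-dimensional subset from a surface preserves genus, so $\overline{S}$ has genus $g$. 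The same reasoning applies to $\overline{S}'$, and the classification of compact surfaces then forces $\overline{S}$ and $\overline{S}'$ to be homeomorphic.

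The main obstacle will be the end-space identification in the existence step: one needs to check that the canonical map $C \to \Ends(S')$ is a homeomorphism, not merely a bijection. This comes down to carefully matching a shrinking clopen neighborhood basis of each $c \in C$ with the corresponding nested sequence of complementary components of $S'$, and confirming that the topology on $\Ends(S')$ generated by sets of the form $U_i^*$ matches the subspace topology on $C$ inherited from the disk.
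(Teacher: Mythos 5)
Your proposal is correct and follows the same route the paper takes: the paper records this lemma as an immediate consequence of Ker\'{e}kj\'{a}rt\'{o}'s classification and writes no further argument, and your existence step (realizing $\Ends(S)$ as a closed subset of a Cantor set in the interior of $\Sigma_{g,b}$ and matching invariants) together with your uniqueness step (forcing $\overline S \ssm S \subset \mathrm{int}(\overline S)$ via compactness of $\partial S$, then invoking the classification of compact surfaces) supplies exactly the standard details being elided. No gaps.
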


Given the definition of $\Ends(S)$ it is clear that a homeomorphism of $S$ induces a homeomorphism of $\Ends(S)$.
As any isotopy must fix the ends of $S$, we see that there is an induced action of $\mcg(S)$ on $\Ends(S)$.
The \emph{pure mapping class group} $\pmcg(S)$ is the kernel of this action.

\begin{Prop}[{\cite[Proposition I.1]{BeguinConstruction}}]
\label{prop:extension}
Let $\mathbb{D}$ denote the unit disk and let $\Pi$ be a totally-disconnected compact subset contained in the interior of $\mathbb{D}$.
Given a homeomorphism $f\co \Pi \to \Pi$ there exists a homeomorphism $\hat f \co \mathbb{D} \to \mathbb{D}$  and a regular neighborhood $N$ of $\partial \mathbb{D}$ such that $\hat f |_\Pi = f$ and $\hat f|_{N} = id$. 
\end{Prop}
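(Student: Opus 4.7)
The plan is to construct $\hat f$ as a uniform limit of an inductively defined sequence of homeomorphisms $\hat f_n\colon \mathbb{D}\to\mathbb{D}$, each of which is the identity on a fixed collar $N$ of $\partial\mathbb{D}$ and approximates $f$ on $\Pi$ at scale $2^{-n}$. Since $\Pi$ is compact, metrizable, and totally disconnected, its clopen sets form a basis for its topology, so we obtain a refining sequence of finite clopen partitions $\mathcal Q_1\prec \mathcal Q_2\prec\cdots$ of $\Pi$ with $\max_{Q\in \mathcal Q_n}\mathrm{diam}(Q) < 2^{-n}$; each $f(\mathcal Q_n)$ is another such partition. Fix a collar $N$ of $\partial\mathbb{D}$ with $\Pi\cap N = \emptyset$ and inductively choose, for every $Q\in \mathcal Q_n$, closed topological disks $D_Q, E_Q\subset \mathrm{int}(\mathbb{D})\setminus N$ with $D_Q\cap \Pi = Q$ and $E_Q\cap \Pi = f(Q)$, subject to: disks indexed by distinct members of $\mathcal Q_n$ are pairwise disjoint; $D_{Q'}\subset \mathrm{int}(D_Q)$ and $E_{Q'}\subset \mathrm{int}(E_Q)$ whenever $Q'\in \mathcal Q_{n+1}$ refines $Q\in \mathcal Q_n$; and $\max_Q \mathrm{diam}(D_Q)$, $\max_Q \mathrm{diam}(E_Q)\to 0$.

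The $\hat f_n$ are constructed by induction. For $n = 1$, appeal to the standard fact that any two ordered $k$-tuples of pairwise disjoint closed disks in $\mathrm{int}(\mathbb{D})$ can be interchanged by an ambient isotopy of $\mathbb{D}$ supported in a compact subset of $\mathrm{int}(\mathbb{D})$, to find a homeomorphism $\hat f_1$ with $\hat f_1|_N = \mathrm{id}$ and $\hat f_1(D_Q) = E_Q$ for every $Q\in \mathcal Q_1$. Given $\hat f_n$, define $\hat f_{n+1}$ to equal $\hat f_n$ on the complement of $\bigcup_{Q\in \mathcal Q_n} \mathrm{int}(D_Q)$; on each $D_Q$ with $Q\in \mathcal Q_n$, modify $\hat f_n$, keeping its restriction to $\partial D_Q$ unchanged, to a homeomorphism $D_Q\to E_Q$ that sends $D_{Q'}$ onto $E_{Q'}$ for each $Q'\in \mathcal Q_{n+1}$ with $Q'\subset Q$. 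Since $\hat f_n$ and $\hat f_{n+1}$ agree outside $\bigcup_{Q\in \mathcal Q_n} D_Q$ and both take each $D_Q$ into $E_Q$, their sup-norm distance is at most $\max_Q\mathrm{diam}(E_Q)$; a symmetric bound on $(\hat f_n^{-1})$ follows from the nesting of the $E_Q$ and $D_Q$. Thus $(\hat f_n)$ converges uniformly to a homeomorphism $\hat f$ of $\mathbb{D}$ with $\hat f|_N = \mathrm{id}$. For $p\in \Pi$ with $\{p\} = \bigcap_n Q_n$, $Q_n\in \mathcal Q_n$, we have $\hat f_n(p)\in E_{Q_n}$, and the nested disks $E_{Q_n}$ shrink to $f(p) = \bigcap_n f(Q_n)$; hence $\hat f|_\Pi = f$.

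The principal technical obstacle is the inductive modification of $\hat f_n$ on each $D_Q$, which reduces to the following disk extension lemma: given closed topological disks $D$ and $E$, a homeomorphism $\varphi\colon \partial D\to \partial E$, and finite systems of pairwise disjoint closed sub-disks $\{D_i'\}\subset \mathrm{int}(D)$ and $\{E_i'\}\subset \mathrm{int}(E)$, there exists a homeomorphism $D\to E$ restricting to $\varphi$ on $\partial D$ and sending $D_i'$ onto $E_i'$. This follows from the Schoenflies theorem combined with the Alexander trick. Controlling both $\hat f_n$ and its inverse through the twin shrinking-diameter conditions on the domain- and image-side thickenings is what ensures the limit is a genuine homeomorphism rather than merely a continuous surjection.
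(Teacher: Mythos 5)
The paper itself gives no proof of this proposition: it is imported verbatim from B\'eguin--Crovisier--Le Roux, so there is no in-paper argument to compare against, and your proposal must stand on its own. Its overall architecture --- refining finite clopen partitions of $\Pi$, nested systems of disjoint disks thickening the pieces on the domain and image sides, inductively defined homeomorphisms, and uniform convergence of both $\hat f_n$ and $\hat f_n^{-1}$ forced by the shrinking diameters --- is the standard and correct strategy, and the convergence bookkeeping, the Schoenflies/Alexander-trick disk extension lemma, and the verification that $\hat f|_\Pi = f$ and $\hat f|_N = \mathrm{id}$ are all fine.

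The gap is the sentence in which you ``inductively choose, for every $Q\in\mathcal Q_n$, closed topological disks $D_Q, E_Q$ with $D_Q\cap\Pi = Q$,'' pairwise disjoint, nested, and with diameters tending to $0$. This engulfing step carries essentially all of the content of the proposition and is not routine; it is the only place where planarity can enter, and as written your argument never uses that $\Pi$ lies in a $2$-disk rather than a $3$-ball. The verbatim analogue of the statement for $\mathbb{D}^3$ is \emph{false}: for Antoine's necklace, clopen pieces cannot in general be engulfed by disjoint balls meeting the Cantor set in exactly those pieces, and self-homeomorphisms need not extend to the ambient ball. Nor is the step elementary in the plane: the naive idea of cutting along grid lines disjoint from $\Pi$ fails because a compact totally disconnected planar set can project \emph{onto} an interval (take $\{(g(x),x): x\in C\}$ with $C$ the standard Cantor set and $g$ the Cantor function), so there need not be any nearby vertical or horizontal line missing $\Pi$. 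The standard repair is to invoke the Moore--Kline/Antoine theorem that every compact totally disconnected subset of the plane lies on an arc, equivalently is tamely embedded; after an ambient homeomorphism one may assume $\Pi$ is a subset of the standard Cantor set in $[0,1]\times\{0\}$, and then each $D_Q$ and $E_Q$ can be taken to be a finite union of thin rectangles around subintervals, joined by thin bands in the lower half-plane, with the required disjointness, nesting, and diameter control. With that citation (or an equivalent engulfing lemma) supplied, the rest of your argument goes through verbatim.
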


Observe that since a regular neighborhood of the boundary of $\mathbb{D}$ is fixed pointwise, it follows that $\hat f$ is orientation preserving.
This yields:

\begin{Cor}
\label{cor:extension}
If $S$ has finite genus, then the induced homomorphism $\mcg(S) \to \homeo(\Ends(S))$ is surjective.
\end{Cor}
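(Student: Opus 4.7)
The plan is to reduce the statement to Proposition \ref{prop:extension} by embedding \( \Ends(S) \) as a totally-disconnected compact subset of a disk inside \( \overline{S} \). Given \( f \in \homeo(\Ends(S)) \), I would first find a closed disk \( \mathbb{D} \subset \overline{S} \) whose interior contains \( \Ends(S) \) and which is disjoint from \( \partial S \). Applying the proposition to the pair \( (\mathbb{D}, \Ends(S)) \) then yields a homeomorphism \( \hat f \co \mathbb{D} \to \mathbb{D} \) extending \( f \) and equal to the identity on a collar of \( \partial \mathbb{D} \); gluing \( \hat f \) to the identity on \( \overline{S} \ssm \mathbb{D} \) produces a homeomorphism \( F \co \overline{S} \to \overline{S} \) that preserves \( \Ends(S) \) setwise. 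Consequently \( F \) restricts to a self-homeomorphism of \( S = \overline{S} \ssm \Ends(S) \) which fixes \( \partial S \) pointwise (since \( \partial S \cap \mathbb{D} = \emptyset \)) and is orientation preserving by the remark following Proposition \ref{prop:extension}; by construction its induced action on \( \Ends(S) \) is \( f \), which is exactly what is required for surjectivity.

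The substance of the argument is the construction of \( \mathbb{D} \). Because \( \Ends(S) \) is a compact, totally-disconnected subset of \( \overline{S} \ssm \partial S \) and \( \overline{S} \) is a surface, each of its points admits arbitrarily small closed-disk neighborhoods disjoint from \( \partial S \), and compactness allows me to cover \( \Ends(S) \) by finitely many pairwise-disjoint such disks \( V_1, \ldots, V_n \) whose interiors still cover \( \Ends(S) \). Since \( \overline{S} \) is a connected compact surface and removing finitely many disjoint closed disks together with a boundary collar preserves connectedness, I can then choose pairwise-disjoint arcs \( \alpha_2, \ldots, \alpha_n \) in \( \overline{S} \ssm ( V_1 \cup \cdots \cup V_n \cup \partial S ) \) with \( \alpha_i \) running from \( \partial V_1 \) to \( \partial V_i \). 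Thickening these arcs to pairwise-disjoint bands and taking the union with \( V_1 \cup \cdots \cup V_n \) produces a compact, connected, planar subsurface of Euler characteristic \( 1 \) with a single boundary circle, i.e.\ a closed disk \( \mathbb{D} \), which satisfies the required properties.

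The main obstacle is this disk-construction step; the rest of the argument is formal. The topology of \( \overline{S} \ssm \mathbb{D} \) is irrelevant to the construction, so it does not matter how the genus of \( \overline{S} \) is distributed among its handles and boundary components. The finite-genus hypothesis enters only through the preceding lemma, which ensures the existence of the compactification \( \overline{S} \) as an honest compact surface in the first place, thereby providing the ambient space in which the disk \( \mathbb{D} \) lives.
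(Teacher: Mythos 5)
Your argument is correct and follows exactly the route the paper intends: the paper presents the corollary as an immediate consequence of Proposition \ref{prop:extension} together with the observation that $\hat f$, being the identity near $\partial \mathbb{D}$, is orientation preserving, leaving implicit both the construction of a disk in $\overline S$ whose interior contains $\Ends(S)$ and the extension of $\hat f$ by the identity to all of $\overline S$. You have simply supplied those routine details (the finite cover by disjoint disks joined by bands, and the check that the induced action on ends is $f$), so there is nothing to correct.
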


%%%%%%%%%%%%%%%%%
% Initial topology
%%%%%%%%%%%%%%%%%

\section{The initial topology}
\label{sec:initial}

For the entirety of this section, we require $S$ to have finite genus and be of infinite type.
In this setting, we will introduce a topology on $\pmcg(S)$ that is suited to our algebraic study of the group. 
Define the set $\Lambda(S)$ to be the set of finite subsets of $\Ends(S)$, that is, 
\[
\Lambda(S) = \{\lambda \subset \Ends(S) : |\lambda| < \infty\}.
\]
When it is clear from context, we will simply write $\Lambda$.
For each $\lambda \in \Lambda$, we define the surface $S_\lambda = \overline S \ssm \lambda$.
The inclusion $i_\lambda\co S \into S_\lambda$ induces a homomorphism $\vp_\lambda \co \pmcg(S) \to \pmcg(S_\lambda)$.
This homomorphism is the forgetful homomorphism, where one forgets all ends of $S$ not in $\lambda$. 

The \emph{initial topology}, denoted $\tau_w$, on $\pmcg(S)$ is the initial topology with respect to the family of maps $\{\vp_\lambda\}_{\lambda \in \Lambda}$, that is, it is the coarsest topology such that $\vp_\lambda$ is continuous for each $\lambda \in \Lambda$.
(We are viewing $\pmcg(S_\lambda)$ with the discrete topology.)
If at this point the reader is inclined to think about inverse limits, we discuss this viewpoint in Section~\ref{sec:inverse}.

For the rest of the section our goal is to present two key lemmas (Lemmas \ref{lem:Hausdorff} and \ref{lem:closure}) regarding the algebraic structure of $\pmcg(S)$.
Before starting, we need to understand the center of \( \pmcg(S) \), which we denote by \( Z(\pmcg(S)) \).
Recall that the pure mapping class group of any finite-type surface is generated by Dehn twists (see \cite[Section 4.4.4]{Primer}).
Combining this fact with the definition of the initial topology above, we have the following lemma as an immediate corollary:

\begin{Lem}
\label{lem:generate1}
\( (\pmcg(S), \tau_w) \) is topologically generated by Dehn twists.
\end{Lem}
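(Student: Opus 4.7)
The plan is to show the subgroup $H := \langle D \rangle$ generated by the set $D$ of Dehn twists is $\tau_w$-dense in $\pmcg(S)$. I would first unpack the initial topology: since each $\pmcg(S_\lambda)$ carries the discrete topology, a basic $\tau_w$-neighborhood of any $f \in \pmcg(S)$ is a finite intersection of sets of the form $\varphi_{\lambda_i}^{-1}(\{\varphi_{\lambda_i}(f)\})$, and taking $\lambda := \lambda_1\cup\cdots\cup\lambda_n$ the single preimage $\varphi_{\lambda}^{-1}(\{\varphi_{\lambda}(f)\})$ is contained in this intersection. Hence density of $H$ reduces to showing that for every $\lambda \in \Lambda$ and every $f \in \pmcg(S)$ there exists $h \in H$ with $\varphi_\lambda(h) = \varphi_\lambda(f)$.

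Next I would invoke the cited fact that the finite-type group $\pmcg(S_\lambda)$ is generated by Dehn twists to factor $\varphi_\lambda(f) = \prod_{j=1}^k T_{c_j}^{\epsilon_j}$ with each $c_j$ an essential simple closed curve in $S_\lambda$ and $\epsilon_j \in \{\pm 1\}$. The remaining step is to lift each $c_j$ to a simple closed curve $c_j' \subset S$ isotopic to $c_j$ in $S_\lambda$; once this is done, $T_{c_j'}$ is a Dehn twist in $\pmcg(S)$ with $\varphi_\lambda(T_{c_j'}) = T_{c_j}$, so that $h := \prod_j T_{c_j'}^{\epsilon_j} \in H$ satisfies $\varphi_\lambda(h) = \varphi_\lambda(f)$.

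The curve-lifting follows from a standard general-position argument: $\Ends(S)\ssm\lambda$ is a closed, totally disconnected subset of the interior of $S_\lambda$, hence has empty interior and does not separate any coordinate chart, so $c_j$ can be isotoped within $S_\lambda$ to avoid it. This is the only non-formal step and the (modest) main obstacle; the rest is a mechanical unwinding of the definitions, which is why the paper presents the lemma as an immediate corollary of the finite-type generation result together with the setup of $\tau_w$.
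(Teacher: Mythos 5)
Your argument is correct and is essentially the expansion of what the paper leaves implicit: the paper derives the lemma as an ``immediate corollary'' of finite-type generation by Dehn twists together with the definition of $\tau_w$, which is exactly the reduction to matching $\vp_\lambda$-images on a single (union) $\lambda$ plus the curve-lifting step that you spell out. Your lifting step --- isotoping a curve in $S_\lambda$ off the closed, totally disconnected set $\Ends(S)\ssm\lambda$ --- is the same standard fact the paper invokes elsewhere when it asserts $\overline{\sd}_\lambda = \vp_\lambda(\overline{\sd})$, so there is no gap.
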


\begin{Lem}
\label{lem:center2}
An element of \( \pmcg(S) \) is central if and only if it fixes the isotopy class of every simple closed curve.
\end{Lem}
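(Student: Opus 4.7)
The plan is to prove each direction separately, using Lemma \ref{lem:generate1} (Dehn twists topologically generate \( (\pmcg(S), \tau_w) \)) together with the fact that \( \tau_w \) is a Hausdorff group topology.

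For the forward direction, suppose \( f \in Z(\pmcg(S)) \). For any essential simple closed curve \( c \), the Dehn twist \( T_c \) has compact support and so belongs to \( \pmcg(S) \). Centrality gives \( T_{f(c)} = f T_c f^{-1} = T_c \), and since Dehn twists about essential curves are equal only when their supporting curves are isotopic (as unoriented isotopy classes), we obtain \( [f(c)] = [c] \). For non-essential curves the claim is automatic: any element of \( \pmcg(S) \) fixes boundary components pointwise, fixes each puncture (as an end), and permutes trivial curves within the unique isotopy class of trivial curves in the connected surface \( S \); this handles trivial, boundary-parallel, and puncture-parallel simple closed curves.

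For the converse, suppose \( f \in \pmcg(S) \) fixes the isotopy class of every simple closed curve. Then for every Dehn twist \( T_c \) about an essential curve \( c \), we have \( f T_c f^{-1} = T_{f(c)} = T_c \), so \( f \) commutes with every Dehn twist. I want to upgrade this to ``\(f\) commutes with every element of \( \pmcg(S) \).'' For this, I will use that \( (\pmcg(S), \tau_w) \) is a topological group: since \( \tau_w \) is the initial topology with respect to the group homomorphisms \( \vp_\lambda \) into the (discrete, hence topological) groups \( \pmcg(S_\lambda) \), both multiplication and inversion are continuous. This, together with the Hausdorff property of \( \tau_w \) from Lemma \ref{lem:Hausdorff}, implies that the centralizer \( C_{\pmcg(S)}(f) = \{g : fgf^{-1}g^{-1} = 1\} \) is closed, being the preimage of the closed point \( \{1\} \) under a continuous map. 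Since this closed subgroup contains every Dehn twist, and Dehn twists topologically generate \( (\pmcg(S), \tau_w) \) by Lemma \ref{lem:generate1}, it must equal all of \( \pmcg(S) \). Hence \( f \) is central.

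The main obstacle is the small piece of bookkeeping needed to justify that \( \tau_w \) is a group topology, so that centralizers are closed; this is standard but should be stated explicitly since \( \tau_w \) is non-standard. Beyond that, the proof is just the familiar ``a central element commutes with a topological generating set, and conversely if it commutes with a generating set it is central,'' together with the change-of-coordinates identity \( f T_c f^{-1} = T_{f(c)} \) and injectivity of \( c \mapsto T_c \) on isotopy classes of essential curves.
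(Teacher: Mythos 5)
Your proof is correct and takes essentially the same route as the paper's: both directions hinge on the identity \( f T_c f^{-1} = T_{f(c)} \), and the converse upgrades ``commutes with all Dehn twists'' to ``central'' via Lemma \ref{lem:generate1} and the continuity of multiplication in \( (\pmcg(S),\tau_w) \). You are slightly more careful than the paper about the injectivity of \( c \mapsto T_c \) (restricting it to essential curves and treating trivial and peripheral curves separately) and about why centralizers are closed, but these are refinements of the identical argument.
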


\begin{proof}
Let \( f \in \pmcg(S) \).
Recall that if \( c \) is a simple closed curve in \( S \), then \( f \cdot T_{c} \cdot f^{-1} = T_{f(c)}\). 
Now assume that \( f \) fixes the isotopy class of every simple closed curve.
It follows that \( f \cdot T_{c} \cdot f^{-1} = T_{f(c)} = T_c \) for any simple closed curve \( c \).
Therefore, \( f \) commutes with every Dehn twist in \( \pmcg(S) \) and hence, by Lemma \ref{lem:generate1}, \( f \) commutes with every element of a topological generating set for \( \pmcg(S) \).
It is an easy exercise to show that  multiplication in $(\pmcg(S), \tau_w)$ is continuous.
Therefore, we can conclude that \( f \in Z(\pmcg(S)) \).

Now assume that \( f \) is central.
It follows that \( T_c = f \cdot T_{c} \cdot f^{-1} = T_{f(c)} \) and hence \( f([c]) = [c] \) for every simple closed curve \( c \) in \( S \).
\end{proof}

\begin{Lem}
\label{lem:center}
\( Z(\pmcg(S)) \) is generated by the Dehn twists about the boundary components of \( S \).
In particular, if \( S \) is borderless then \( \pmcg(S) \) has trivial center.
\end{Lem}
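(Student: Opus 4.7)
The forward inclusion is immediate from Lemma \ref{lem:center2}: the Dehn twist about any boundary component $\partial$ of $S$ fixes the isotopy class of every simple closed curve, because any essential simple closed curve can be isotoped off a collar of $\partial$. Hence such a twist lies in $Z(\pmcg(S))$.

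For the reverse inclusion, my plan is to handle the borderless case first and then reduce the bordered case to it. If $S$ is borderless, let $f \in Z(\pmcg(S))$. Lemma \ref{lem:center2} says that $f$ fixes the isotopy class of every simple closed curve, and Lemma \ref{lem:hernandez}, which applies since $S$ is an infinite-type borderless surface, then forces $f$ to be trivial. This disposes of the ``in particular'' assertion.

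For the bordered case, I would cap each boundary component of $S$ with an open disk to obtain a borderless infinite-type surface $\hat S$. Since disks are contractible, $\Ends(\hat S)$ is canonically identified with $\Ends(S)$, so the inclusion $S \hookrightarrow \hat S$ induces a homomorphism $c \co \pmcg(S) \to \pmcg(\hat S)$. Given $f \in Z(\pmcg(S))$, Lemma \ref{lem:center2} tells me that $f$ fixes every isotopy class of simple closed curve in $S$; since every essential simple closed curve in $\hat S$ can be isotoped off the capping disks and hence into $S$, the image $c(f)$ fixes every isotopy class of simple closed curve in $\hat S$. Applying Lemma \ref{lem:hernandez} to $\hat S$ then forces $c(f) = 1$, so $f \in \ker(c)$.

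The remaining, and main, technical step is to identify $\ker(c)$ with the subgroup generated by the Dehn twists about the boundary components of $S$. This is the classical ``capping'' computation: if a representative homeomorphism of $f$ becomes isotopic to the identity in $\hat S$, then after multiplying by appropriate boundary twists and applying the Alexander trick on the capping disks, it can be arranged to be the identity on a collar of $\partial S$. Because this argument is supported on compact neighborhoods of the (compact) boundary $\partial S$, the standard finite-type proof goes through verbatim, and I expect no new difficulty arising from the infinite-type nature of $S$; this is the one step I would write out carefully.
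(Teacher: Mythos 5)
Your forward inclusion and your treatment of the borderless case match the paper's argument. The gap is in the bordered case, specifically in the capping step. You cap each boundary component with a plain disk, and then assert that the kernel of the induced map $c \co \pmcg(S) \to \pmcg(\hat S)$ is the subgroup generated by the boundary Dehn twists. That identification is false for plain-disk capping: the kernel also contains the disk-pushing elements. Concretely, if $\gamma$ is an essential simple loop in $\hat S$ passing through a capping disk and $\gamma_1, \gamma_2$ are the two boundary curves of an annular neighborhood of $\gamma$ containing that disk, then $T_{\gamma_1} T_{\gamma_2}^{-1}$ lies in $\ker(c)$ but is a nontrivial element of $\pmcg(S)$ that is not a product of boundary twists (it moves any curve crossing $\gamma$). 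This is exactly the failure mode your last paragraph glosses over: the isotopy of a representative of $f$ to the identity in $\hat S$ may drag the capping disk around a nontrivial loop, and that cannot be corrected by boundary twists alone. The classical statement with kernel equal to the boundary-twist subgroup is the one for capping with \emph{once-punctured} (once-marked) disks, i.e.\ \cite[Theorem 3.18]{Primer}, which is what the paper uses: it glues a once-punctured disk to each boundary component to form a borderless surface $R$, shows $\iota_*(f)$ is trivial via Lemmas \ref{lem:center2} and \ref{lem:hernandez}, and then invokes the infinite-type Alexander method to identify $\ker \iota_*$ with the boundary-twist subgroup.

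The fix is small: replace your plain disks with once-punctured disks (the new isolated planar ends pin the disks down and kill the disk-pushing subgroup), after which your argument that $c(f)$ fixes every isotopy class in the capped surface goes through unchanged. Alternatively you could keep plain disks and argue separately that an element of $\ker(c)$ which also fixes every isotopy class of curve in $S$ has trivial disk-pushing part, but that is an extra argument you have not supplied. One minor structural difference worth noting: the paper establishes that $\iota_*$ is surjective (via topological generation by Dehn twists, Lemma \ref{lem:generate1}) in order to push centrality forward and then apply Lemma \ref{lem:center2} in $R$; you instead transfer the property ``fixes every isotopy class of simple closed curve'' directly along $c$, which is a legitimate and slightly more economical route and does not require surjectivity.
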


\begin{proof}
Let \( G < \pmcg(S) \) be the subgroup generated by the Dehn twists about the boundary components of \( S \).

Let \( R \) be the surface obtained from \( S \) by gluing a once-punctured disk to each component of \( \partial S \).
Let \( \iota\co S \hookrightarrow R \) be the associated inclusion and \( \iota_* \co \pmcg(S) \to \pmcg(R) \) the induced homomorphism.
By construction, \( \iota_* \) is surjective.
To see this, note that \( \iota_* \) is continuous and every Dehn twist in \( \pmcg(R) \) is an image of a Dehn twist in \( \pmcg(S) \).

As \( \iota_* \) is surjective, it follows that if \( f \in Z(\pmcg(S)) \), then \( \iota_*(f) \in Z(\pmcg(R)) \).
By Lemma \ref{lem:center2}, \( \iota_*(f) \) fixes the isotopy class of every simple closed curve; therefore, as \( R \) is borderless, we can apply Lemma \ref{lem:hernandez} to see that\( \iota_*(f) \) is trivial.

A direct replacement of the finite-type Alexander method with the infinite-type Alexander method \cite{Hernandez} in the proof of \cite[Theorem 3.18]{Primer} implies \( G = \ker \iota_* \); in particular,  \( f \in G \).
Therefore, \( Z(\pmcg(S)) \subset G \).

Now, let \( f \in G \), then \( f \) fixes the isotopy class of every simple closed curve.
Lemma \ref{lem:center2} allows us to conclude that \( f \) is central and  \( G \subset Z(\pmcg(S)) \).
\end{proof}

\begin{Lem}\label{lem:Hausdorff}
$\tau_w$ is Hausdorff.
\end{Lem}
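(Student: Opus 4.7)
The plan is to use that $\tau_w$ is the initial topology with respect to the family $\{\vp_\lambda : \lambda \in \Lambda\}$ of maps into discrete---hence Hausdorff---groups: such an initial topology is Hausdorff precisely when the family separates points of $\pmcg(S)$, equivalently when
\[
\bigcap_{\lambda \in \Lambda} \ker \vp_\lambda = \{1\}.
\]
I will fix $h$ in this intersection and show $h = 1$.

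The heart of the proof is to show that $h$ fixes the isotopy class of every essential simple closed curve $c \subset S$. I will pick a representative $f \in h$ with $c$ and $f(c)$ in minimal position in $S$; the hypothesis $\vp_\lambda(h) = 1$ says that $c$ and $f(c)$ are isotopic in $S_\lambda$ for every $\lambda \in \Lambda$. Arguing by contradiction, I assume $c$ and $f(c)$ are not isotopic in $S$ and exhibit a single $\lambda$ in which they are not isotopic. Since $\overline{S}$ is compact and $c \cap f(c)$ is finite, the complement $\overline{S} \setminus (c \cup f(c))$ has only finitely many components. An isotopy between two disjoint essential simple closed curves in a surface is witnessed by an annular component, while an isotopy between two essential simple closed curves that intersect forces (by the bigon criterion in non-minimal position) a bigon component. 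Minimal position in $S$ and non-isotopy in $S$ together force every such exceptional component $R \subset \overline{S}$ to contain a point of $\Ends(S)$, for otherwise $R \subset S$ would furnish the forbidden bigon or annulus in $S$. Taking $\lambda$ to contain one end from each such exceptional component---together with finitely many extra ends that ensure $c$ and $f(c)$ remain essential in $S_\lambda$---eliminates every potential witness in $S_\lambda$, and the bigon criterion (when $|c \cap f(c)| > 0$) or the absence of annular components (when $|c \cap f(c)| = 0$) then yields the desired contradiction.

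Once $h$ fixes every isotopy class of essential simple closed curves, the borderless case follows immediately from Lemma \ref{lem:hernandez}. When $S$ has boundary, let $R$ be the borderless surface obtained by gluing a once-punctured disk to each component of $\partial S$, with induced homomorphism $\iota_* \colon \pmcg(S) \to \pmcg(R)$. Every essential simple closed curve in $R$ can be isotoped into $S$, so $\iota_*(h)$ fixes every isotopy class of simple closed curves in $R$ and is therefore trivial by Lemma \ref{lem:hernandez}; thus $h \in \ker \iota_*$. Replacing the finite-type Alexander method by the infinite-type one of \cite{Hernandez} in the standard proof of \cite[Theorem 3.18]{Primer}---exactly as in Lemma \ref{lem:center}---identifies $\ker \iota_*$ with the subgroup generated by boundary Dehn twists, so $h = \prod_i T_{b_i}^{n_i}$. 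For $|\lambda|$ large enough, $\pmcg(S_\lambda)$ is a finite-type pure mapping class group of sufficient complexity that the $T_{b_i}$ are independent elements of its center; hence $\vp_\lambda(h) = 1$ forces every $n_i = 0$, so $h = 1$. The main technical obstacle I expect is the essentiality bookkeeping: a curve essential in $S$ can become trivial or peripheral in $S_\lambda$ if an end lying inside a disk or peripheral-annulus region of $\overline{S}$ bounded by $c$ or $f(c)$ is omitted from $\lambda$, and this must be patched by a finite enlargement of $\lambda$ capturing one end from each of the finitely many such regions.
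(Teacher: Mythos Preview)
Your argument is correct and follows the same overall skeleton as the paper: both reduce Hausdorffness to showing $\bigcap_{\lambda\in\Lambda}\ker\vp_\lambda=\{1\}$, both separate the ``fixes every curve class'' case (handled via boundary Dehn twists, i.e.\ the content of Lemma~\ref{lem:center}) from the case where some curve $c$ is moved off its isotopy class, and both finish the latter by producing a single $\lambda$ in which $[c]_\lambda\neq[f(c)]_\lambda$.

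The one genuine difference is the mechanism for that last step. The paper encloses $c$ and $f(c)$ in an essential finite-type subsurface $F\subset S$ with $\genus(F)=\genus(S)$, then chooses $\lambda$ meeting every component of $\overline S\smallsetminus F$; this forces $i_\lambda|_F$ to be $\pi_1$-injective, so non-isotopy in $F$ (hence in $S$) persists in $S_\lambda$. You instead argue directly with the complementary regions of $c\cup f(c)$ in $\overline S$ and the bigon/annulus criterion, placing one end of $\lambda$ in each potential bigon or annulus. Your route is more elementary---it avoids appealing to the incompressible-subsurface principle---but the paper's route is more reusable: the same ``choose $F$, then choose $\lambda$ meeting each complementary component'' trick is invoked repeatedly later (e.g.\ in Lemma~\ref{lem:closure}), so packaging it once pays off. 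Your essentiality bookkeeping is harmless but, as you may have noticed, unnecessary: you only need $[c]_\lambda\neq[f(c)]_\lambda$ to conclude $\vp_\lambda(h)\neq 1$, regardless of whether the image curves are essential.
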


\begin{proof}
%It is an easy exercise to show that left multiplication in $(\pmcg(S), \tau_w)$ is continuous and hence a homeomorphism.
To show the Hausdorff property, it is enough to show that any point can be separated from the identity with open sets.
By the definition of $\tau_w$ and continuity of left multiplication, it is enough to show that
\[
\bigcap_{\lambda \in \Lambda} \ker \vp_\lambda = \{id\}.
\]
Let $f \in \pmcg(S)$ be a nontrivial element.
If $f$ is central in $\pmcg(S)$, then by Lemma \ref{lem:center} it is a product of Dehn twists about boundary components and hence the same is true for $\vp_\lambda(f)$.
In particular, $f$ is not an element of $\ker \vp_\lambda$ whenever $|\lambda| > 1$.
Assuming $f$ is not central,  by Lemma \ref{lem:center2}, there exists an essential simple closed curve $a$ in $S$ such that $f(a)$ is not homotopic to $a$.
Choose an essential finite-type surface $F \subset S$ so that $\genus(F) = \genus(S)$ and both $a$ and $f(a)$ are contained in $F$.
Let $\lambda \in \Lambda$ be such that the intersection of $\lambda$ with each component of $\overline S \ssm F$ is nonempty.
It follows that $i_\lambda |_F$ is an embedding of $F$ into $S_\lambda$ as an incompressible surface; in particular, $i_\lambda(f(a))$ and $i_\lambda(a)$ are not homotopic in $S_\lambda$.
It follows that $\vp_\lambda(f)$ does not fix the homotopy class of $i_\lambda(a)$ in $S_\lambda$; hence, $f \notin \ker\vp_\lambda$.
\end{proof}

The proof of Lemma \ref{lem:Hausdorff} tells us that $\pmcg(S)$ inherits the residual properties of finite-type pure mapping class groups.
In particular,  Lemma \ref{lem:Hausdorff} combined with the work of Grossman \cite{GrossmanResidual} implies that $\pmcg(S)$ is residually finite.
This is discussed in more depth in  Section \ref{sec:residual}.
In addition, it follows from the inverse limit construction in Section \ref{sec:inverse} and Lemma \ref{lem:Hausdorff} that $(\pmcg(S), \tau_w)$ is a topological group (see Proposition~\ref{prop:embedding}). 

The goal of the next key lemma (Lemma \ref{lem:closure}) is to detect Dehn twists in $\pmcg(S)$.
In the case of finite-type surfaces, the main idea behind Ivanov's results in \cite{IvanovAutomorphismsT} on the automorphisms of mapping class groups is to algebraically characterize Dehn twists.
Given an automorphism preserving Dehn twists, one can then build a map on the collection of simple closed curves in order to determine the isotopy class of a homeomorphism.
Building off the literature surrounding homomorphisms between mapping class groups, we take a similar approach in Section \ref{sec:automorphisms}.

Given a simple closed curve $a$ in $S$ let $[a]$ denote its homotopy class in $S$ and let $[a]_\lambda$ denote the homotopy class of $i_\lambda(a)$ in $S_\lambda$.
For the sake of the argument below, we record an obvious lemma: 

\begin{Lem}\label{lem:intersection}
For any $\lambda \in \Lambda$
\[
i([a],[b]) \geq i\left([a]_\lambda, [b]_\lambda\right),
\]
where $a$ and $b$ are any two simple closed curves in $S$.
\end{Lem}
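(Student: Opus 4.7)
The plan is a one-paragraph argument: I would simply unwind the definition of geometric intersection number and exploit the fact that $i_\lambda\colon S \hookrightarrow S_\lambda$ is an embedding, so it preserves transverse intersection counts on the nose.

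Concretely, I would choose representatives $a' \in [a]$ and $b' \in [b]$ in $S$ that realize the minimum, so that $|a' \cap b'| = i([a],[b])$. Since $i_\lambda$ is an embedding, the images $i_\lambda(a')$ and $i_\lambda(b')$ are simple closed curves in $S_\lambda$ representing $[a]_\lambda$ and $[b]_\lambda$ respectively, and they meet transversely in exactly $|a' \cap b'|$ points. By the definition of geometric intersection number as a minimum over all homotopy representatives, we then obtain
\[
i([a]_\lambda,[b]_\lambda) \;\leq\; |i_\lambda(a') \cap i_\lambda(b')| \;=\; |a' \cap b'| \;=\; i([a],[b]),
\]
which is the desired inequality.

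I don't foresee any real obstacle here. The only point one might want to be slightly careful about is the edge case where one of the curves becomes inessential in $S_\lambda$ (for instance, if $a$ bounded a disk in $\overline S$ containing precisely the points of $\lambda$ on one side), but this only makes $i([a]_\lambda,[b]_\lambda)$ smaller or zero and hence strengthens the inequality rather than weakening it. No minimization in $S_\lambda$ is needed beyond the universal upper bound provided by the explicit representatives $i_\lambda(a'), i_\lambda(b')$.
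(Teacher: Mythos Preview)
Your argument is correct and is exactly the straightforward unwinding that the paper has in mind; the paper in fact records this lemma as ``obvious'' and gives no proof at all, so your short argument simply fills in the omitted details.
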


Let $\sd$ denote the collection of Dehn twists in $\pmcg(S)$ and let $\sd_\lambda$ denote the collection of Dehn twists in $\pmcg(S_\lambda)$.
The subgroup of $\pmcg(S)$ generated by $\sd$ is the group of compactly supported mapping classes, denoted $\pmcg_c(S)$.
We will use this notation throughout the article.

\begin{Lem}\label{lem:closure}
The closure of $\sd$ in $\tau_w$ is $\overline \sd = \sd \cup \{id\}$. 
\end{Lem}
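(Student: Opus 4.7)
The plan is to establish the two inclusions $\sd \cup \{id\} \subseteq \overline{\sd}$ and $\overline{\sd} \subseteq \sd \cup \{id\}$ separately. For the first, the only real work is to show $id \in \overline{\sd}$, for which I will exhibit an explicit net of Dehn twists converging to the identity indexed by $(\Lambda, \subseteq)$. For each $\lambda \in \Lambda$, using that $\Ends(S) \setminus \lambda$ is infinite (as $S$ is of infinite type), pick two distinct ends in $\Ends(S) \setminus \lambda$ and a closed disk $R_\lambda \subset \overline{S}$ containing both but disjoint from $\lambda$. Set $c_\lambda := \partial R_\lambda \subset S$: the curve $c_\lambda$ bounds a disk containing at least two ends, so it is essential in $S$ and $T_{c_\lambda} \in \sd$; and for any $\mu \in \Lambda$ with $\mu \subseteq \lambda$, the disk $R_\lambda$ is a genuine closed disk in $S_\mu$, forcing $\vp_\mu(T_{c_\lambda}) = id$. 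Hence $(T_{c_\lambda})_{\lambda \in \Lambda}$ converges to $id$ in $\tau_w$.

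For the reverse inclusion, let $f \in \overline{\sd}$ with $f \neq id$ and fix a net $T_{c_i} \to f$; since $\pmcg(S_\mu)$ carries the discrete topology, $\vp_\mu(T_{c_i}) = \vp_\mu(f)$ eventually for every $\mu \in \Lambda$. I split on the asymptotic behavior of the approximating curves. \emph{Trapped case:} if $c_i \subset K$ eventually for some compact, essentially embedded subsurface $K \subset S$, then by Lemma~\ref{lem:Hausdorff} I can choose $\mu \in \Lambda$ with $\vp_\mu(f) \neq id$ and with $\mu$ meeting each of the finitely many components of $\overline{S} \setminus K$, so that $K \hookrightarrow S_\mu$ is incompressible. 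Then $\vp_\mu(f) = T_{\beta_\mu}$ is a nontrivial Dehn twist, $[c_i]_\mu = \beta_\mu$ eventually, and by incompressibility the $S$-isotopy class of any curve in $K$ is determined by its $S_\mu$-isotopy class; hence $[c_i]$ is eventually a single class $[c]$ in $K$ (essential in $S$ because $K$ is essential), giving $f = T_c \in \sd$.

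\emph{Escaping case:} suppose instead that for every compact $K \subset S$, $c_i \not\subseteq K$ cofinally. Fix $\mu \in \Lambda$; since $S$ has finite genus and $\Ends(S)$ is totally disconnected, I can build a compact $K_\mu \subset S$ containing all the genus of $S$ such that each of the finitely many $\mu$-ends lies in its own component of $\overline{S} \setminus K_\mu$. Any $c_i \not\subseteq K_\mu$ then sits in a planar component $U$ of $S \setminus K_\mu$ containing at most one $\mu$-end, and the disk in $\overline{S}$ that $c_i$ bounds on the side away from $\partial K_\mu$ meets $\mu$ in at most one point; hence $c_i$ bounds in $S_\mu$ either a true disk or a once-punctured disk. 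Since a Dehn twist along a curve bounding a once-punctured disk is trivial in the pure mapping class group (isotopic to the identity via a $2\pi$ rotation about the puncture), $\vp_\mu(T_{c_i}) = id$ in either sub-case. Combining ``cofinally $\vp_\mu(T_{c_i}) = id$'' with ``eventually $\vp_\mu(T_{c_i}) = \vp_\mu(f)$'' forces $\vp_\mu(f) = id$; as $\mu$ was arbitrary, Lemma~\ref{lem:Hausdorff} yields $f = id$, contradicting the hypothesis. The main obstacle is precisely this escaping case: one has to show that escaping curves produce Dehn twists that vanish in \emph{every} $\pmcg(S_\mu)$, which hinges on the (standard but essential) triviality of Dehn twists along curves bounding once-punctured disks in the pure mapping class group. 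A secondary subtlety is that the argument runs with nets rather than sequences, so the dichotomy ``eventually trapped vs.\ cofinally escaping'' must be stated carefully, and the final contradiction requires pairing an ``eventually'' statement with a ``cofinally'' statement.
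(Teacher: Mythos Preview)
Your argument for $id \in \overline{\sd}$ and your trapped case are essentially correct. The gap is in the escaping case: the assertion that any $c_i$ with $c_i \not\subseteq K_\mu$ must sit in a single planar component of $S \ssm K_\mu$ is false. A curve not isotopic into $K_\mu$ can cross $\partial K_\mu$ essentially and lie in neither $K_\mu$ nor any one complementary component. For a concrete instance take $S = \bc \ssm \bn$, $\mu = \{1,2,3,4\}$, and $K_\mu$ a five-holed sphere whose complementary regions in $\overline S$ are small disks about $1,2,3,4$ together with one unbounded region carrying $\{5,6,\ldots\}$. The curve $c$ encircling exactly $\{1,2,100\}$ is not isotopic in $S$ to any curve in $K_\mu$ (no curve in $K_\mu$ separates $\{1,2,100\}$ from its complement) nor to any curve in a single complementary component; yet $[c]_\mu$ is the essential curve in the four-punctured sphere $S_\mu$ encircling $\{1,2\}$, so $\vp_\mu(T_c)\neq id$. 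Thus the implication ``$c_i \not\subseteq K_\mu \Rightarrow \vp_\mu(T_{c_i}) = id$'' fails, and with it the cofinal-versus-eventual contradiction. Nets built from exactly such curves (e.g.\ $T_{c_n}$ with $c_n$ around $\{1,2,n\}$) are precisely the reason the paper warns, just before the proof, that stabilizers of curves are not open in $\tau_w$.

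The paper sidesteps this by running the dichotomy on the limit $f$ rather than on the approximating curves. Assuming $f\notin\pmcg_c(S)$, one finds a curve $a$ with $[f(a)]\neq[a]$ and, for each large $n$, a curve $b_n$ homotopic into $S\ssm K_n$ with $[f(b_n)]\neq[b_n]$; for a suitable $\lambda$, the image $\vp_\lambda(f)$ is a single Dehn twist moving both $[a]_\lambda$ and $[b_n]_\lambda$, and an intersection-number identity for Dehn twists combined with Lemma~\ref{lem:intersection} forces $i([f(a)],[b_n])>0$ for every $n$. Hence $f(a)$ exits every compact set, contradicting that $f$ is a homeomorphism. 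Once $f\in\pmcg_c(S)$, an argument essentially identical to your trapped case finishes.
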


We note that this statement also holds in $\tau_q$; we give a proof in Section \ref{sec:cpt} (see Proposition~\ref{prop:closure2}).
Additionally, the reader should be warned that the subtlety in the proof of the lemma lies in the fact that the stabilizer of an isotopy class of a curve is not open in $\tau_w$.
This failure for a stabilizer of a curve to be open follows from the proof of Proposition \ref{prop:containment} and is demonstrated by the example illustrated in Figure \ref{fig:converge}, which shows an example of a (non-intuitive) sequence of Dehn twists converging to the identity.

\begin{proof}
Fix an exhaustion $K_1 \subset K_2 \subset\cdots$ of $S$ by essential finite-type $\genus(S)$-subsurfaces.
Observe, for later in the proof, that the direct limit of the groups $\{\pmcg(K_n)\}_{n\in \bn}$ is $\pmcg_c(S)$.

We first see that the identity is in the closure of $\sd$.
For each $K_n$ in our exhaustion choose an essential simple closed curve $b_n$ in $S$ contained in the complement of $K_n$.
The sequence $\{T_{b_n}\}_{n\in \bn}$ converges to the identity in $\tau_q$ as $T_{b_n}$ agrees with the identity on $K_n$ and $\{K_n\}_{n\in \bn}$ is an exhaustion.
The containment $\tau_w \subset \tau_q$ (see Proposition \ref{prop:containment}) guarantees this sequence also converges to  the identity in $\tau_w$. 

As $\tau_w$ is not first countable when $\Ends(S)$ is uncountable (see Lemma \ref{lem:first-countable}), it is necessary for the remainder of the proof for us to work with nets as opposed to sequences; however, the reader will lose no intuition by replacing the word net with sequence.

Let $I$ be a directed set and let $\{T_{c_i}\}_{i\in I}$ be a net in $\sd$, where $T_{c_i}$ denotes the Dehn twist about the simple closed curve $c_i$. 
Assume the net converges to $f \in (\pmcg(S), \tau_w)$.
Further assume that $f$ is not the identity, then we will show that $f$ is a Dehn twist.

We first claim that $f$ is an element of $\pmcg_c(S)$.
Suppose not, then, by Lemma \ref{lem:center}, $f$ is not in the center of $\pmcg(S)$  so there exists an essential simple closed curve $a$ with $[f(a)] \neq [a]$. 
Choose $N \in \bn$ such that both $a$ and $f(a)$ can be homotoped into $K_N$.
For each $n\in \bn$ with $n > N$ we can find $b_n$ homotopic into the complement of $K_n$ such that $[f(b_n)] \neq [b_n]$.
Fix $n \in \bn$ with $n > N$, then there exists $m \in \bn$ with $m > n$  and where $b_n$ and $f(b_n)$ are homotopic into $K_m$.
Choose $\lambda \in \Lambda$ such that $\lambda$ intersects each component of $\overline S \ssm K_m$. 

Given the choice of $\lambda$, we have $[f(a)]_\lambda \neq [a]_\lambda$ and $[f(b_n)]_\lambda \neq [b_n]_\lambda$.
It follows that $\vp_{\lambda}(f)$ is nontrivial and thus a Dehn twist, call it $T$, as our net is in $\sd$.
By construction, 
\[
T([a]_\lambda) = \vp_\lambda(f)([a]_\lambda) \neq [a]_\lambda
\]
and
\[
T([b_n]_\lambda) = \vp_\lambda(f)([b_n]_\lambda) \neq [b_n]_\lambda.
\]
Choose a simple closed curve $c$ in $K_m$ such that $T = T_{[c]_\lambda}$, then
\begin{itemize}
\item $i([c]_\lambda, [a]_\lambda)  > 0$,
\item $i([c]_\lambda, [b_n]_\lambda) > 0$, and
\item $i([a]_\lambda, [b_n]_\lambda) = 0$,
\end{itemize}
where the first two inequalities follow from the fact that $T$ fixes neither $[a]_\lambda$ nor $[b_n]_\lambda$. 
The last equality comes from the fact that $a$ and $b_n$ are disjoint combined with Lemma~\ref{lem:intersection}.
A direct application of \cite[Proposition 3.4]{Primer} gives the inequality
\[
i(T([a]_\lambda), [b_n]_\lambda) = i([c]_\lambda, [a]_\lambda)i([c]_\lambda, [b_n]_\lambda)> 0.
\]
Another application of Lemma \ref{lem:intersection} yields
\[
i([f(a)], [b_n]) \geq i([f(a)]_\lambda, [b_n]_\lambda) = i(T([a]_\lambda), [b_n]_\lambda) > 0.
\]
As $n$ was arbitrary, we see that $f(a)$ must exit every compact set of $S$; hence, it cannot be compact and $f$ is not a homeomorphism.

It follows there exists $n \in \bn$ such that $f\in \pmcg(K_n) < \pmcg_c(S)$.
Let $m \in \bn$ such that $m > n$ and $K_n$ is contained in the interior of $K_m$.
Choose $\lambda \in \Lambda$ such that $\lambda$ intersects each component of $\overline S\ssm K_m$ nontrivially.
We have that $\vp_\lambda$ restricted to $\pmcg(K_n)$ is injective and induced by the inclusion $ i_\lambda|_{K_n} \co K_n \hookrightarrow \overline{S}\ssm \lambda$; in particular, as $f \in \pmcg(K_n)$ and $\vp_\lambda(f)$ is a Dehn twist, we must have that $f$ is a Dehn twist.
\end{proof}

As direct consequence, we give a characterization of Dehn twists.
For $\lambda \in \Lambda$, let $\overline \sd_\lambda$ be the union of the Dehn twists in $\pmcg(S_\lambda)$ with the identity, so  $\overline \sd_\lambda = \vp_\lambda(\overline \sd)$. 

\begin{Prop}\label{prop:characterization}
$f\in \overline \sd$ if and only if $\vp_\lambda(f) \in \overline \sd_\lambda$ for every $\lambda \in \Lambda$. 
\end{Prop}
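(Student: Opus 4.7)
The plan is to prove both implications directly, leveraging Lemma \ref{lem:closure} (which identifies \( \overline{\sd} = \sd \cup \{id\} \)) and the inverse-system structure of the forgetful maps \( \vp_\lambda \).

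For the forward direction, if \( f \in \overline{\sd} \), then either \( f = id \) or \( f = T_c \) for an essential simple closed curve \( c \subset S \). The identity case is immediate. Otherwise, \( \vp_\lambda(T_c) = T_{i_\lambda(c)} \); and since \( i_\lambda(c) \) in \( S_\lambda \) is either trivial, bounds a once-punctured disk, or is essential, \( \vp_\lambda(T_c) \) is either the identity or a Dehn twist of \( S_\lambda \), hence lies in \( \overline{\sd}_\lambda \).

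The backward direction carries the real content. Assuming \( \vp_\lambda(f) \in \overline{\sd}_\lambda = \vp_\lambda(\overline{\sd}) \) for every \( \lambda \in \Lambda \), I would pick for each \( \lambda \) an element \( g_\lambda \in \overline{\sd} \) satisfying \( \vp_\lambda(g_\lambda) = \vp_\lambda(f) \). With \( \Lambda \) directed by inclusion, the goal is to show the net \( \{g_\lambda\}_{\lambda \in \Lambda} \) converges to \( f \) in \( \tau_w \); combined with the closedness of \( \overline{\sd} \) (Lemma \ref{lem:closure} again), this forces \( f \in \overline{\sd} \).

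The crucial compatibility enabling convergence is that whenever \( \mu \subseteq \lambda \), filling in the ends in \( \lambda \ssm \mu \) induces a forgetful homomorphism \( \psi_{\lambda,\mu} \co \pmcg(S_\lambda) \to \pmcg(S_\mu) \) making the triangle \( \vp_\mu = \psi_{\lambda,\mu} \circ \vp_\lambda \) commute. Applying \( \psi_{\lambda,\mu} \) to \( \vp_\lambda(g_\lambda) = \vp_\lambda(f) \) immediately yields \( \vp_\mu(g_\lambda) = \vp_\mu(f) \) for every \( \lambda \supseteq \mu \). Since each \( \pmcg(S_\mu) \) is discrete, this says \( \vp_\mu(g_\lambda) \) is eventually constant equal to \( \vp_\mu(f) \) for each fixed \( \mu \), and by the definition of \( \tau_w \) as the initial topology with respect to \( \{\vp_\mu\}_{\mu \in \Lambda} \) this delivers \( g_\lambda \to f \). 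There is no genuine obstacle here once the triangle of forgetful maps is in hand; the only thing worth checking is that sequential end-filling agrees with one-step end-filling at the level of mapping classes, which is routine from how homeomorphisms extend across filled-in punctures.
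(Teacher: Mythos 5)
Your proof is correct and takes essentially the same route as the paper's: pick a net in \( \overline{\sd} \) whose \( \vp_\lambda \)-images agree with those of \( f \), show it converges to \( f \) in \( \tau_w \) via the compatibility \( \vp_\mu = \psi_{\lambda,\mu} \circ \vp_\lambda \), and invoke Lemma \ref{lem:closure}. The paper leaves the convergence verification implicit; you spell it out, which is a harmless (and arguably welcome) elaboration.
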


\begin{proof}
The forward direction is trivial.
For the reverse, assume $f\in \pmcg(S)$ such that $\vp_\lambda(f) \in \overline \sd_\lambda$ for every $\lambda \in \Lambda$.
For each $\lambda \in \Lambda$ choose $T_\lambda \in \overline \sd$ such that $\vp_\lambda(T_\lambda) = \vp_\lambda(f)$.
This implies that the net $\{T_\lambda\}_{\lambda \in \Lambda}$ converges to $f$; hence, $f \in \overline \sd$ by Lemma \ref{lem:closure}.
\end{proof}

%%%%%%%%%%%%%%%%%
% Residual Finiteness
%%%%%%%%%%%%%%%%%

\section{Residual Finiteness}
\label{sec:residual}

A group is \emph{residually finite} if the intersection of all of its finite-index proper normal subgroups is trivial.
As noted earlier, the work of Grossman \cite{GrossmanResidual} tells us that mapping class groups of finite-type surfaces are residually finite.
As this property is inherited by subgroups it follows that pure mapping class groups are also residually finite. 

In this section, we will prove Theorem \ref{thm:rf} in a series of propositions.
We break the section into two subsections handling the finite- and infinite-genus cases separately.

\subsection{Finite genus}
As a start, we properly record the result mentioned in Section \ref{sec:initial}:

\begin{Prop}
\label{prop:fgrf}
If $S$ has finite genus, then $\pmcg(S)$ is residually finite.
\end{Prop}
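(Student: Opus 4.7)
The plan is to leverage the Hausdorff property of $\tau_w$ (Lemma \ref{lem:Hausdorff}) together with Grossman's theorem that finite-type pure mapping class groups are residually finite. The whole setup of $\tau_w$ as the initial topology with respect to the forgetful maps $\vp_\lambda \co \pmcg(S) \to \pmcg(S_\lambda)$ was designed precisely for this sort of reduction, so the argument should essentially be a one-line consequence of what has already been established.

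More concretely, I would proceed as follows. Fix a nontrivial $f \in \pmcg(S)$; the goal is to find a finite-index normal subgroup of $\pmcg(S)$ that does not contain $f$. The proof of Lemma \ref{lem:Hausdorff} shows that $\bigcap_{\lambda \in \Lambda} \ker \vp_\lambda = \{id\}$, so there exists some $\lambda \in \Lambda$ with $\vp_\lambda(f) \neq id$ in $\pmcg(S_\lambda)$. Since $S_\lambda = \overline{S} \ssm \lambda$ is a finite-type surface (it is a compact surface with $|\lambda|$ punctures), Grossman's theorem \cite{GrossmanResidual} applies and $\pmcg(S_\lambda)$ is residually finite. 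Hence there is a finite-index normal subgroup $N \lhd \pmcg(S_\lambda)$ with $\vp_\lambda(f) \notin N$.

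Pulling back, $\vp_\lambda^{-1}(N)$ is a normal subgroup of $\pmcg(S)$ that does not contain $f$. It remains to observe that $\vp_\lambda^{-1}(N)$ has finite index in $\pmcg(S)$: the map $\pmcg(S)/\vp_\lambda^{-1}(N) \to \pmcg(S_\lambda)/N$ induced by $\vp_\lambda$ is an injection, and the target is finite. Since $f$ was an arbitrary nontrivial element, this shows $\pmcg(S)$ is residually finite.

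There is essentially no obstacle here: all of the technical content lives in Lemma \ref{lem:Hausdorff} and in Grossman's theorem. The only mild point to be careful about is that $\vp_\lambda$ need not be surjective, but residual finiteness is preserved under taking subgroups (or, equivalently, preimages of finite-index subgroups under arbitrary homomorphisms have finite index in the preimage of the image), so the argument still goes through without issue.
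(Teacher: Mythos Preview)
Your proof is correct and follows essentially the same approach as the paper: invoke Lemma~\ref{lem:Hausdorff} to find $\lambda$ with $\vp_\lambda(f)$ nontrivial, then apply Grossman's theorem to $\pmcg(S_\lambda)$ and pull back. The paper phrases the second step as a homomorphism to a finite group rather than a finite-index normal subgroup, but this is only a cosmetic difference.
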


\begin{proof}
Let $f \in \pmcg(S)$ be an arbitrary nontrivial element, then by Lemma \ref{lem:Hausdorff} there exists $\lambda \in \Lambda$ such that $f \notin \ker \vp_\lambda$.
Further, since $\pmcg(S_\lambda)$ is residually finite \cite{GrossmanResidual} there exists a homomorphism $\psi\co \pmcg(S_\lambda) \to G$, where $G$ is a finite group and $f \notin \ker( \psi\circ \vp_\lambda)$.  
\end{proof}

In a slight change of perspective, we will focus on the full mapping class group $\mcg(S)$ for the remainder of this subsection.

Let $B_n$ denote the $n$-stranded braid group, which is also the mapping class group of the $n$-punctured disk.
The classical presentation for $B_n$ is given as follows:

\[
B_n = 
\Biggl\langle 
\si_1, \ldots, \si_{n-1} \left|
\begin{array}{ l}
	\si_i \si_j = \si_j \si_i : |i-j|\geq 2 \text { and } 1\leq i,j \leq n-1\\
	\si_i\si_{i+1}\si_i = \si_{i+1}\si_i \si_{i+1} : 1 \leq i \leq n-2
\end{array}
\right.\Biggr\rangle
\]

With this presentation we see that there are natural inclusions $\iota_{n,m}\co B_n \hookrightarrow B_{m}$ whenever $n \leq m$.
In particular $\langle B_n, \iota_{n,m}\rangle$ is a directed system of groups and we define $B_\infty$ to be the direct limit.
The presentation for $B_\infty$ is immediate from that of $B_n$, namely, 
\[
B_\infty = 
\Biggl\langle 
\{\si_i : i \in \bn\} \left|
\begin{array}{ l}
	\si_i \si_j = \si_j \si_i : |i-j|\geq 2 \text { and } i,j\in \bn\\
	\si_i\si_{i+1}\si_i = \si_{i+1}\si_i \si_{i+1} : i\in \bn
\end{array}
\right.\Biggr\rangle
\]

We will proceed by proving that $B_\infty$ is not residually finite and that it is a subgroup of every finite-genus big mapping class group.  

\begin{Prop}\label{prop:cyclic}
Every finitely-generated quotient of $B_\infty$ is cyclic.  
\end{Prop}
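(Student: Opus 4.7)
The plan is to exploit two complementary features of the presentation of $B_\infty$: on the one hand, any two generators $\sigma_i, \sigma_j$ with $|i-j|\ge 2$ commute, and on the other hand, the braid relation forces all of the $\sigma_i$ to lie in a single conjugacy class.

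First I would verify from the braid relation that $\sigma_{i+1} = (\sigma_i\sigma_{i+1})\,\sigma_i\,(\sigma_i\sigma_{i+1})^{-1}$, so that inductively every $\sigma_i$ is conjugate to $\sigma_1$ in $B_\infty$. Hence, for any quotient $q\co B_\infty \twoheadrightarrow G$, all the elements $q(\sigma_i)$ are conjugate in $G$.

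Now let $G$ be a finitely-generated quotient with a finite generating set $g_1, \ldots, g_r$. Each $g_j$ is a word in finitely many of the $q(\sigma_i)$, so there exists $N \in \bn$ such that $\{q(\sigma_1), \ldots, q(\sigma_N)\}$ already generates $G$. Choose any index $m \ge N+2$. Then $|m - i| \ge 2$ for all $1 \le i \le N$, so in $B_\infty$ the element $\sigma_m$ commutes with each of $\sigma_1, \ldots, \sigma_N$. Passing to $G$, this means $q(\sigma_m)$ commutes with every element of a generating set for $G$, so $q(\sigma_m) \in Z(G)$.

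For the final step, I would use the conjugacy of the generators together with centrality: for each $i$, pick $w \in B_\infty$ with $w\sigma_i w^{-1} = \sigma_m$; then
\[
q(\sigma_m) \;=\; q(w)\,q(\sigma_i)\,q(w)^{-1},
\]
and since $q(\sigma_m)$ is central this forces $q(\sigma_i) = q(\sigma_m)$. Thus every generator $q(\sigma_i)$ of $G$ equals the single element $q(\sigma_m)$, so $G = \langle q(\sigma_m)\rangle$ is cyclic. The argument is short and has essentially no obstacle; the only point worth flagging is the initial reduction from a countable generating set $\{q(\sigma_i)\}_{i\in\bn}$ to a finite subset, which follows immediately from finite generation by writing each $g_j$ as a word in the $q(\sigma_i)$.
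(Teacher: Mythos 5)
Your proof is correct and takes essentially the same route as the paper's: finite generation gives an $N$ with $q(\sigma_1),\dots,q(\sigma_N)$ generating $G$, and the commutation relations then force $q(\sigma_m)$ to be central for $m$ sufficiently far beyond $N$. The only cosmetic difference is the final collapse of the generators: the paper deduces $q(\sigma_i)=q(\sigma_{i+1})$ directly from the braid relation once the two images commute, whereas you use the (correctly verified) conjugacy of all the $\sigma_i$ together with the centrality of $q(\sigma_m)$ --- both arguments are valid one-liners.
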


\begin{proof}
Let $f\co B_\infty \to G$ be a surjective homomorphism to a finitely-generated group.
We can then find some $N$ such that $f|_{B_N}$ is surjective.
For $m > N$,  $\si_m$ commutes with every element of $B_N$; hence, $f(\si_m)$ is in the center of $G$ for every $m > N$.
From the relations, it is easy to see that if $f(\si_i)$ commutes with $f(\si_{i+1})$, then $f(\si_i) = f(\si_{i+1})$.
It now follows that $G$ is generated by $f(\si_1)$.  
\end{proof}

The abelianization of $B_\infty$ is isomorphic to $\bz$; in particular, it follows from Proposition~\ref{prop:cyclic} that every finite quotient of $B_\infty$ factors through its abelianization.
Thus the commutator subgroup is in the intersection of every proper finite-index normal subgroup of $B_\infty$.
This shows:

\begin{Cor}\label{cor:braid}
$B_\infty$ is not residually finite.
\end{Cor}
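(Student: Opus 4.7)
The plan is to use Proposition \ref{prop:cyclic} as the engine, together with an explicit description of the abelianization, to show that every proper finite-index normal subgroup of $B_\infty$ contains a common nontrivial element.

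First I would compute the abelianization $B_\infty^{\mathrm{ab}}$. Since every braid relation $\sigma_i\sigma_{i+1}\sigma_i = \sigma_{i+1}\sigma_i\sigma_{i+1}$ forces $\sigma_i$ and $\sigma_{i+1}$ to have equal images under abelianization, and the commutation relations are automatically satisfied in an abelian group, the map sending every $\sigma_i \mapsto 1$ gives an isomorphism $B_\infty^{\mathrm{ab}} \cong \bz$. Next, by Proposition \ref{prop:cyclic}, any finite quotient of $B_\infty$ is cyclic, hence abelian, and therefore factors through the projection $B_\infty \to B_\infty^{\mathrm{ab}} \cong \bz$. Consequently, every proper finite-index normal subgroup $N \triangleleft B_\infty$ contains the commutator subgroup $[B_\infty, B_\infty]$.

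It then suffices to produce a single nontrivial element of $[B_\infty, B_\infty]$. The natural candidate is $w = \sigma_1 \sigma_2^{-1}$, which lies in the kernel of the abelianization map (its exponent sum is zero) and so lies in $[B_\infty, B_\infty]$. That $w$ is nontrivial can be verified inside the subgroup $B_3 \hookrightarrow B_\infty$ generated by $\sigma_1, \sigma_2$: applying the standard homomorphism $B_3 \to \mathrm{SL}_2(\bz)$ sending $\sigma_1, \sigma_2$ to the usual unipotent generators shows that $\sigma_1 \sigma_2^{-1}$ has nontrivial image, so $w \neq 1$. Alternatively one can invoke the fact, already implicit in the paper, that $B_3$ is residually finite with nonabelian image, so $[B_3, B_3]$ is nontrivial and maps injectively into $B_\infty$.

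Putting the two halves together, $w$ is a nontrivial element of $B_\infty$ that lies in every proper finite-index normal subgroup, so the intersection of all such subgroups is nontrivial and $B_\infty$ fails to be residually finite. The only nonroutine step is exhibiting a nontrivial element of $[B_\infty, B_\infty]$, and even this is mild given the embedding $B_3 \hookrightarrow B_\infty$; the bulk of the work has already been carried out in Proposition \ref{prop:cyclic}.
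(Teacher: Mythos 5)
Your proposal is correct and follows essentially the same route as the paper: compute $B_\infty^{\mathrm{ab}} \cong \bz$, apply Proposition \ref{prop:cyclic} to conclude every finite quotient factors through the abelianization, and deduce that the (nontrivial) commutator subgroup lies in every proper finite-index normal subgroup. The only difference is that you explicitly exhibit a nontrivial commutator element via $B_3 \to \mathrm{SL}_2(\bz)$, a point the paper leaves implicit.
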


\begin{Prop}\label{prop:braid}
Let $S$ be an infinite-type surface.  
If $S$ has finite genus, then $B_\infty$ is a subgroup of $\mcg(S)$.
\end{Prop}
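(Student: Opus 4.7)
The plan is to embed $B_\infty$ into $\mcg(S)$ by constructing generators $\{\sigma_n\}_{n \in \bn}$ satisfying the defining braid relations, then verifying injectivity via forgetful maps to finite-type mapping class groups. Since $S$ is of infinite type, $\Ends(S)$ is infinite, compact, totally disconnected, and Hausdorff, so it has an accumulation point $e^*$. The first step is to produce a family $\{W_n\}_{n \in \bn}$ of pairwise disjoint, pairwise homeomorphic, clopen subsets of $\Ends(S)$ converging to $e^*$ in the Hausdorff sense. Using the Cantor--Bendixson decomposition $\Ends(S) = P \sqcup D$ into a perfect part $P$ and the isolated points $D$, I consider two cases: if $D$ is infinite, a convergent subsequence of isolated points yields clopen singletons $W_n = \{d_n\}$; if $D$ is finite, then $P$ is a nonempty Cantor set that is clopen near a suitably chosen $e^* \in P \ssm \overline{D}$, so nested shells of clopen neighborhoods of $e^*$ inside $P$ produce mutually homeomorphic clopen Cantor sets $W_n$.

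Because $S$ has finite genus, each clopen $W_n \subset \Ends(S)$ is enclosed in $S$ by a simple closed curve, and the corresponding planar subsurface end-compactifies to a closed topological disk $\Delta_n \subset \overline{S}$ with $\Delta_n \cap \Ends(S) = W_n$. I arrange the $\Delta_n$ to be pairwise disjoint and to accumulate at $e^*$ inside a larger closed disk $\Delta \subset \overline{S}$, and I choose subdisks $E_n \subset \Delta$ containing $\Delta_n \cup \Delta_{n+1}$ but disjoint from $\Delta_m$ for $m \neq n, n+1$. By Proposition~\ref{prop:extension}, any homeomorphism $W_n \to W_{n+1}$ extends to a homeomorphism $\psi_n \colon \Delta_n \to \Delta_{n+1}$; using the $\psi_n$ I build a rigid half-twist $\sigma_n \in \mcg(S)$ supported in $E_n$ that swaps $\Delta_n$ and $\Delta_{n+1}$, modelled on the standard planar half-twist of a disk with two subdisks.

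The far-commuting relations $\sigma_i \sigma_j = \sigma_j \sigma_i$ for $|i - j| \geq 2$ follow from disjoint supports, and the three-strand braid relation is a local computation inside a disk containing $\Delta_i, \Delta_{i+1}, \Delta_{i+2}$ that reduces to its finite-type counterpart. This yields a homomorphism $\Phi \colon B_\infty \to \mcg(S)$. For injectivity, I pick basepoints $q_i \in W_i$ compatibly with the half-twists so that $\lambda_n = \{q_1, \ldots, q_n\} \subset \Ends(S)$ is preserved setwise by $\sigma_1, \ldots, \sigma_{n-1}$; the forgetful homomorphism to $\mcg(S_{\lambda_n})$ sends $\Phi(B_n)$ onto the standard copy of $B_n$ inside the finite-type mapping class group of $S_{\lambda_n}$, which is a faithful embedding by classical theory, so $\Phi|_{B_n}$ is injective for each $n$ and hence $\Phi$ is injective on the direct limit $B_\infty$. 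The main technical obstacle is the first step: producing the homeomorphic clopen shells $W_n$ requires the Cantor--Bendixson case analysis, while the rest of the construction parallels standard finite-type braid arguments.
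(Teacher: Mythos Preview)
Your approach is correct in outline and is in fact more self-contained than the paper's: both arguments split along the Cantor--Bendixson dichotomy, but where the paper disposes of the Cantor case by quoting Funar--Kapoudjian \cite{FunarUniversal}, you carry out an explicit ``fat puncture'' construction, treating the clopen Cantor pieces $W_n$ as marked disks and building half-twists by hand. In the isolated-ends case the two arguments are essentially the same (the paper phrases it via nested essential punctured disks and a direct-limit, you via explicit generators), so the real difference is that your Case~2 is elementary while the paper's is a citation. The trade-off is that your argument requires verifying the braid relation and injectivity directly, whereas the paper inherits both from the subsurface-inclusion machinery.

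There is, however, a genuine gap in your injectivity step. You claim that the forgetful map to $\mcg(S_{\lambda_n})$ with $\lambda_n = \{q_1,\ldots,q_n\}$ sends your generators to a faithful copy of $B_n$, but this fails when $\overline S$ is the $2$-sphere (i.e.\ $\genus(S)=0$ and $\partial S = \emptyset$): then $S_{\lambda_n}$ is the $n$-punctured sphere, the boundary of your $n$-punctured disk is inessential there, and the center of $B_n$ lies in the kernel. The fix is painless---enlarge $\lambda_n$ by two further ends (for instance $q_{n+1}, q_{n+2}$, or $e^*$ together with any other end left outside $\bigcup W_i$), which your $\sigma_1,\ldots,\sigma_{n-1}$ still fix; the resulting $S_{\lambda_n'}$ then has at least $n+2$ punctures and the embedded $n$-punctured disk becomes essential, so $B_n \hookrightarrow \mcg(S_{\lambda_n'})$ is honest. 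A minor terminological point: the Cantor--Bendixson decomposition gives a perfect kernel and a countable \emph{scattered} complement, not literally ``the isolated points''; your case split is nonetheless correct because an infinite scattered set in a compact metric space forces infinitely many isolated points.
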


In fact, Proposition \ref{prop:braid} holds whenever $S$ has infinitely many planar ends, but we will not use the full generality of the result in what follows.

\begin{proof}

The proof splits into two cases: either $\Ends(S)$ has infinitely or finitely many isolated points.
Let us start by assuming that $\Ends(S)$ has infinitely many isolated points.
Together, the Cantor-Bendixson Theorem and Brouwer's topological characterization of the Cantor set imply that $\Ends(S) = C \sqcup P$ (the disjoint union is as sets, not as topological spaces), where $C$ is a Cantor space and $P$ is countable (see \cite[Theorem 6.4 \& Theorem 7.4]{KechrisClassical}).
It follows that the isolated ends of $S$ are denumerable, so we can choose a labelling $\{p_1, p_2, \ldots\}$ of the isolated ends.
For each $n\in \bn$ let $D_n \subset S$ be an $n$-punctured disk satisfying:
\begin{enumerate}
\item
$\Ends(D_n) = \{p_1, \ldots, p_n\}$, and
\item
$D_{n}\subset D_{n+1}$.
\end{enumerate}
It follows that $D_n$ is an essential subsurface of $S$; in particular, the embedding $D_n \hookrightarrow S$ induces an injection $\mcg(D_n) \into \mcg(S)$. 
Recall that $\mcg(D_n) \cong B_n$.
By the universal property of direct limits, the inclusions $D_{n} \hookrightarrow D_{n+1}$ induce an injective homomorphism $B_\infty \hookrightarrow \mcg(S)$ as desired.

Now assume $\Ends(S)$ has a finite number of isolated points.
Again applying the Cantor-Bendixon Theorem, we have  $\Ends(S) = C \sqcup F$, where $C$ is a Cantor space and $F$ is the finite collection of isolated points.
Let $D \subset \overline S$ be a disk such that \( D \cap F = \emptyset \) and \( D \cap C = C' \) where \( C' \) is a clopen subset of \( C \) and hence a Cantor space.
The work of Funar-Kapoudjian \cite[Section 7]{FunarUniversal} implies $B_\infty < \mcg(D\ssm C')$; thus, $B_\infty < \mcg(S)$ as $D \ssm C'$ is an essential subsurface.
\end{proof}

Combining Corollary \ref{cor:braid} and Proposition \ref{prop:braid}, we have:

\begin{Cor}
If $S$ is a finite-genus surface of infinite type, then $\mcg(S)$ is not residually finite.
\end{Cor}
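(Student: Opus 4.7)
The plan is to observe that the corollary is essentially immediate from the two preceding results combined with the fact that residual finiteness is inherited by subgroups. Specifically, if $G$ is residually finite and $H \hookrightarrow G$ is a subgroup, then $H$ is residually finite, since for any nontrivial $h \in H$ one can pull back a finite quotient of $G$ separating $h$ from the identity.

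First I would invoke Proposition \ref{prop:braid}, which guarantees an embedding $B_\infty \hookrightarrow \mcg(S)$ whenever $S$ has finite genus. Next I would appeal to Corollary \ref{cor:braid}, which states that $B_\infty$ itself fails to be residually finite. Taken together, these two facts force $\mcg(S)$ to fail residual finiteness: were $\mcg(S)$ residually finite, then so would be its subgroup $B_\infty$, contradicting Corollary \ref{cor:braid}.

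There is no real obstacle here. The only subtle point worth mentioning explicitly is the hereditary nature of residual finiteness, but this is a standard elementary fact. Thus the proof amounts to a direct contrapositive argument: assume residual finiteness of $\mcg(S)$, apply it to the image of $B_\infty$, and derive a contradiction with Corollary \ref{cor:braid}.
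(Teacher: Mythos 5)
Your proposal is correct and is exactly the paper's argument: the paper derives this corollary immediately by combining Proposition \ref{prop:braid} (the embedding of $B_\infty$ into $\mcg(S)$ for finite-genus $S$) with Corollary \ref{cor:braid} ($B_\infty$ is not residually finite), using the standard fact that residual finiteness passes to subgroups. No differences worth noting.
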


\subsection{Infinite genus}

In this subsection we will prove the remaining pieces of Theorem \ref{thm:rf} involving infinite-genus surfaces.

\begin{Prop}\label{prop:no}
If $S$ has infinite genus, then $\pmcg_c(S)$ has no finite quotients.
\end{Prop}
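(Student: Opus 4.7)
The plan is to deduce the proposition almost directly from the cited theorem of Paris \cite{ParisSmall}. By definition, $\pmcg_c(S)$ is the subgroup generated by the set $\sd$ of Dehn twists, so it suffices to show that every homomorphism $\vp \co \pmcg_c(S) \to G$ to a finite group $G$ kills every Dehn twist.

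Fix such a $\vp$ and a simple closed curve $a$ in $S$; we aim to show $\vp(T_a)=1$. Because $S$ has infinite genus, one can choose a compact essential subsurface $K \subset S$ that contains $a$ in its interior with $a$ not homotopic to any boundary component of $K$, and whose genus is as large as desired. The essential inclusion $K \into S$ induces a monomorphism $\pmcg(K) \into \pmcg_c(S)$, so $\vp$ restricts to a homomorphism $\pmcg(K) \to G$. Paris's theorem supplies a bound $g_0=g_0(|G|)$ such that, once $\genus(K) \geq g_0$, any homomorphism from $\pmcg(K)$ to $G$ is forced to kill every Dehn twist about a non-peripheral simple closed curve in $K$. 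Choosing $K$ with $\genus(K) \geq g_0$ then yields $\vp(T_a)=1$, since $a$ is non-peripheral in $K$ by construction.

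Since $a$ was arbitrary, $\vp$ vanishes on a generating set of $\pmcg_c(S)$ and is therefore trivial. The substantive content of the argument is entirely contained in \cite{ParisSmall}; the role of the infinite-genus hypothesis on $S$ is only to guarantee that any prescribed Dehn twist can be placed inside a compact essential subsurface of arbitrarily large genus in the configuration Paris's theorem requires, which is exactly why the proposition is described as being almost immediate. The only mildly delicate point is when $a$ is peripheral in $S$ (for instance, if it bounds a once-punctured disk around an isolated end), in which case one first rewrites $T_a$ as a product of Dehn twists about non-peripheral curves via a lantern-type relation in a sufficiently high-genus compact subsurface and then applies the above argument to each factor.
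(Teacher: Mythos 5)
Your argument is correct and is essentially the paper's: both rest on Paris's theorem that $\pmcg$ of a genus-$g$ finite-type surface has no proper subgroup of index below a quantity growing with $g$, applied to large-genus compact subsurfaces of $S$ --- the paper packages this as a contradiction by intersecting a putative finite-index normal subgroup with the terms of an exhaustion, while you kill each Dehn twist generator directly by restricting $\vp$ to a large-genus $\pmcg(K)$. One small simplification: Paris's theorem actually forces the entire restriction of $\vp$ to $\pmcg(K)$ to be trivial (otherwise its kernel would be a proper subgroup of index at most $|G|$), so it kills boundary and peripheral twists too, and your closing lantern-relation workaround is unnecessary.
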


This has an immediate corollary:

\begin{Cor}
$\pmcg_c(S)$ is perfect and does not virtually surject onto $\bz$.
\end{Cor}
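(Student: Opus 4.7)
The plan is to derive both claims from Proposition \ref{prop:no}, using the normal-core trick for the virtual-surjection part and a finite-type exhaustion argument for perfectness.

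First, the non-virtual-surjection reduces to perfectness: if $H \leq \pmcg_c(S)$ has finite index, then its normal core $N = \bigcap_{g} gHg^{-1}$ is normal and of finite index, since $\pmcg_c(S)/N$ embeds in $\sym([\pmcg_c(S):H])$. Proposition \ref{prop:no} forces $N = \pmcg_c(S)$, and hence $H = \pmcg_c(S)$. So any virtual surjection onto $\bz$ is in fact a genuine surjection, and it is enough to prove that $\pmcg_c(S)$ is perfect (which kills all abelian quotients).

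For perfectness, I would exhaust $S$ by compact essential subsurfaces $K_1 \subset K_2 \subset \cdots$ with $K_n$ in the interior of $K_{n+1}$, $\genus(K_n) \to \infty$, and each complementary region $K_{n+1}\ssm K_n$ containing positive genus (so that no simple closed curve in $K_n$ is parallel to $\partial K_{n+1}$ inside $K_{n+1}$); since $\genus(S)=\infty$, this is routine. Then $\pmcg_c(S) = \bigcup_n \pmcg(K_n)$ as a directed union. Given $f \in \pmcg_c(S)$, fix $n$ with $f \in \pmcg(K_n)$; by Dehn--Lickorish, $f$ is a product of Dehn twists about simple closed curves in $K_n$. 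Now choose $m > n$ with $\genus(K_m) \geq 3$; by construction every such curve sits in the interior of $K_m$ and is not boundary-parallel there. By the classical computation of $H_1(\pmcg(K_m); \bz)$ for compact surfaces of genus at least three (Powell--Harer--Korkmaz), the abelianization is generated purely by Dehn twists about boundary components, so each of the twists appearing in the factorization of $f$ lies in $[\pmcg(K_m),\pmcg(K_m)]$. Hence $f \in [\pmcg_c(S), \pmcg_c(S)]$.

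The main obstacle, such as it is, is the finite-type input: one needs the abelianization computation for pure mapping class groups of compact surfaces of genus $\geq 3$ with arbitrarily many boundary components. This is standard but the citation needs care. The remainder is just bookkeeping about the exhaustion, which is automatic in infinite genus. Note that this route cannot be shortened to ``perfect follows directly from no finite quotients,'' since a nontrivial torsion-free divisible abelian group (like $\mathbb{Q}$) has no nontrivial finite quotient; one genuinely needs the finite-type input to rule out such an abelianization.
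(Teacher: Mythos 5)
Your proof is correct, and it is considerably more careful than the paper, which offers no argument at all beyond declaring the corollary ``immediate'' from Proposition \ref{prop:no}. Your observation that perfectness does \emph{not} follow formally from the absence of finite quotients (the abelianization could a priori be a nontrivial divisible group such as $\mathbb{Q}$) is exactly right, and the missing input is the one you supply: the vanishing of $H_1$ for finite-type mapping class groups in genus at least $3$, which the paper mentions (citing Powell) only in the discussion following the corollary rather than in a proof. Your normal-core reduction for the virtual-surjection claim is the standard argument and surely the intended one; note that once you know $\pmcg_c(S)$ has no proper finite-index subgroups at all, you could also conclude that part directly by composing a putative surjection onto $\bz$ with $\bz \to \bz/2\bz$ to produce a forbidden finite quotient, without routing through perfectness.

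One correction to your finite-type input. For a compact surface $K$ of genus at least $3$, with boundary fixed pointwise, $H_1(\pmcg(K);\bz)$ is trivial outright (Powell for closed surfaces; Harer and Korkmaz for the bounded and punctured cases); it is not merely ``generated by Dehn twists about boundary components.'' As literally written, your step would not close: knowing that the abelianization is generated by the images of boundary twists does not force the image of a non-boundary-parallel twist to vanish there. The correct, stronger statement says each $\pmcg(K_m)$ with $\genus(K_m)\geq 3$ is itself perfect, so every $f\in\pmcg_c(S)$ lies in $[\pmcg(K_m),\pmcg(K_m)]\subset[\pmcg_c(S),\pmcg_c(S)]$, and your extra hypothesis that the twisting curves avoid being boundary-parallel in $K_m$ becomes unnecessary. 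With that repair the argument is complete.
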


Mapping class groups of finite-type surfaces are known to be perfect \cite{PowellTwo} (except for some low-complexity cases).
Motivated by Kazhdan's Property (T), Ivanov conjectured  that mapping class groups of finite-type surfaces do not virtually surject onto $\bz$ (see \cite[Problem 2.11.A]{KirbyProblems}).
(Equivalently, this says $H_1(G, \mathbb Q) = 0$ for any finite-index subgroup $G$ of $\mcg(S)$.)
Putman-Wieland \cite{PutmanAbelian} have proven stability conditions for this conjecture; it would be fascinating if the stability could manifest into a statement about big mapping class groups. 

As a second  corollary, we have our desired result in relation to Theorem \ref{thm:rf}: 

\begin{Cor}
\label{cor:igrf}
Neither $\mcg(S)$ nor $\pmcg(S)$ is residually finite when $S$ has infinite genus.
\end{Cor}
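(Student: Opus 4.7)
The plan is to derive this directly from Proposition \ref{prop:no} by using the fact that residual finiteness passes to subgroups. First, observe the containment $\pmcg_c(S) < \pmcg(S) < \mcg(S)$, so it suffices to show that $\pmcg_c(S)$ itself fails to be residually finite; nontriviality of $\mcg(S)$ and $\pmcg(S)$ then follow since any subgroup of a residually finite group is residually finite.

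Next, I would note that $\pmcg_c(S)$ is nontrivial when $S$ has infinite genus --- for instance, it contains the Dehn twist about any essential simple closed curve in $S$. By Proposition \ref{prop:no}, $\pmcg_c(S)$ has no nontrivial finite quotients, i.e.\ no proper normal subgroup of finite index. Consequently the intersection of all finite-index normal subgroups of $\pmcg_c(S)$ is $\pmcg_c(S)$ itself, which is nontrivial. Hence $\pmcg_c(S)$ is not residually finite.

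Finally, I would invoke the standard fact that subgroups of residually finite groups are residually finite (if $H < G$ and $g \in H$ is nontrivial, then any finite-index normal subgroup $N \triangleleft G$ missing $g$ gives a finite-index normal subgroup $N \cap H \triangleleft H$ missing $g$). Applying this to the inclusions $\pmcg_c(S) < \pmcg(S) < \mcg(S)$ yields the desired conclusion that neither $\pmcg(S)$ nor $\mcg(S)$ is residually finite.

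There is essentially no obstacle here once Proposition \ref{prop:no} is in hand; the entire argument is a two-line deduction from that result together with the nontriviality of $\pmcg_c(S)$ and the hereditary nature of residual finiteness.
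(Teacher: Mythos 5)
Your proposal is correct and is exactly the deduction the paper intends: the corollary is stated without proof immediately after Proposition \ref{prop:no}, the point being that a nontrivial group with no proper finite-index normal subgroups cannot be residually finite, and that residual finiteness is inherited by subgroups (a fact the paper itself invokes at the start of Section \ref{sec:residual}). Nothing is missing.
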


The heavy lifting in the proof of Proposition \ref{prop:no} is done by a theorem of Paris \cite{ParisSmall}, which says that when $S$ is a finite-type genus-$g$ surface, the minimal index of a proper subgroup of $\pmcg(S)$ is bounded below by a linear function in $g$.
(The work of Berrick-Gebhardt-Paris \cite{ParisFinite} gives the exact value as an exponential function in $g$.)

\begin{proof}[Proof of Proposition \ref{prop:no}]
Let $\{F_n\}_{n\in\bn}$ be an exhaustion of $S$ by essential finite-type surfaces, then $\genus(F_n) \to \infty$ as $n\to \infty$.
Let $m_n$ denote the minimal index of a subgroup in $\pmcg(F_n)$, then $\lim_{n \rightarrow \infty} m_n = \infty$ \cite{ParisSmall}.  

Suppose $N$ is a proper normal subgroup of $\pmcg_c(S)$ with finite index $k$.
As $\pmcg_c(S)$ is the direct limit of $\{\pmcg(K_n)\}_{n\in \bn}$, we have that there exists $M \in \bn$ such that $\pmcg(K_m)$ is not a subgroup of $N$ for all $m > M$ (otherwise $N$ would not be proper).
%\begin{enumerate}
%\item
%$N$ is not a subgroup of $\pmcg(K_n)$ for any $n \in \bn$ (otherwise it would have infinite index), and
%\item
%
%\end{enumerate}
It follows that for $m > M$ the intersection $N \cap \pmcg(K_m)$ is a proper normal subgroup of $\pmcg(K_m)$ with index at most $k$.
However, this implies that the sequence $\{m_n\}_{n\in\bn}$ is bounded, a contradiction.
\end{proof}

%%%%%%%%%%%%%%%%%
% Automorphisms
%%%%%%%%%%%%%%%%%

\section{Morphisms}
\label{sec:automorphisms}

The goal of this section is to finish the proof of Theorem \ref{thm:main} with Propositions \ref{thm:genus} and \ref{prop:homeo}.
We then deduce corollaries about the outer automorphism group.
The tools in this section combine the results on the initial topology in Section \ref{sec:initial} with the literature on homomorphisms between mapping class groups (namely, \cite{AramayonaHomomorphisms, Castel}).
By Proposition \ref{prop:fgrf} and Corollary \ref{cor:igrf}, the algebraic structure of $\pmcg(S)$ detects whether $S$ has finite or infinite genus.
For the remainder of the section, we focus on finite-genus surfaces.

\begin{Prop}
\label{thm:genus}
Let $S$ and $S'$ be finite-genus surfaces.
If the genus of $S$ is at least 3 and strictly greater than  that of $S'$, then there is no monomorphism from $\pmcg(S)$ to $\pmcg(S')$.
\end{Prop}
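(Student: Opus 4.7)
Suppose, for contradiction, that $\phi\co \pmcg(S) \to \pmcg(S')$ is a monomorphism. Write $g = \genus(S) \geq 3$ and $g' = \genus(S') < g$. The plan is to use the forgetful maps defining $\tau_w$ to reduce to a rigidity statement for homomorphisms between finite-type pure mapping class groups, and then appeal to the results of Aramayona--Souto \cite{AramayonaHomomorphisms} and Castel \cite{Castel}.

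First, I would choose an essential finite-type subsurface $F \subset S$ with $\genus(F) = g$. Since $g \geq 3$, the group $\pmcg(F)$ is a non-trivial finite-type mapping class group, and the inclusion $F \hookrightarrow S$ induces a monomorphism $\iota \co \pmcg(F) \hookrightarrow \pmcg(S)$. Hence $\phi \circ \iota$ is an injective homomorphism from $\pmcg(F)$ into $\pmcg(S')$, with non-trivial image.

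Next, for each $\lambda \in \Lambda(S')$ I would consider the composition
\[
\psi_\lambda := \vp_\lambda \circ \phi \circ \iota \co \pmcg(F) \longrightarrow \pmcg(S'_\lambda).
\]
Both source and target are finite-type pure mapping class groups; the source has genus $g \geq 3$, while the target has genus $g' < g$. The rigidity results of \cite{AramayonaHomomorphisms, Castel} assert that any non-trivial homomorphism between finite-type pure mapping class groups whose source has genus at least $3$ is essentially induced by a subsurface embedding. Since no subsurface of $S'_\lambda$ can have genus exceeding $g'$, this forces $\psi_\lambda$ to be trivial for every $\lambda$.

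Consequently the image of $\phi \circ \iota$ lies in $\bigcap_{\lambda \in \Lambda(S')} \ker \vp_\lambda$. If $S'$ has infinite type, Lemma \ref{lem:Hausdorff} identifies this intersection with $\{id\}$; if $S'$ has finite type, the rigidity result applies directly to $\phi \circ \iota$ without the forgetful maps. In either case $\phi \circ \iota$ is trivial, contradicting injectivity of $\phi$ on $\pmcg(F)$. The principal obstacle is pinning down the exact statement of the finite-type rigidity theorem needed: one must verify that \cite{AramayonaHomomorphisms, Castel} apply to pure mapping class groups of surfaces with boundary on the source side and with punctures on the target side, throughout the genus range $g \geq 3$ with $g > g'$. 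Once this is in hand, the Hausdorff property of $\tau_w$ does all of the remaining work.
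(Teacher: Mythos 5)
Your proposal matches the paper's proof essentially line for line: restrict to an essential finite-type subsurface $F \subset S$ of full genus, postcompose $\phi\circ\iota$ with each forgetful map $\vp_\lambda$, kill each $\psi_\lambda$ by finite-type rigidity, and conclude that $\iota(\pmcg(F))\subset\ker\phi$ via Lemma \ref{lem:Hausdorff}. The one point you flagged as needing verification is resolved in the paper by citing the specific statement \cite[Proposition 7.1]{AramayonaHomomorphisms} --- that any homomorphism from a finite-type mapping class group of genus at least $3$ to one of strictly smaller genus is trivial --- rather than the full ``induced by an embedding'' theorem, which requires larger genus.
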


\begin{proof}
Suppose $S$ and $S'$ are surfaces satisfying 
\[\genus(S) > \max\{3, \genus(S')\}\]
and that $\psi \co \pmcg(S) \to \pmcg(S')$ is a homomorphism.
Let $F \subset S$ be an essential finite-type subsurface with $\genus(F) = \genus(S)$ and $\iota\co \pmcg(F) \hookrightarrow \pmcg(S)$ the inclusion homomorphism.  
For each $\lambda \in \Lambda(S')$, we obtain a homomorphism $\psi_\lambda \co \pmcg(F) \to \pmcg(S'_\lambda)$ defined by the composition $\psi_\lambda = \vp_\lambda \circ \psi \circ \iota$.
As $F$ and $S'_\lambda$ are finite-type and $\genus(F) > \max\{3, \genus(S_\lambda')\}$, it follows from \cite[Proposition 7.1]{AramayonaHomomorphisms} that $\psi_\lambda$ is trivial.
Further, as $\lambda$ was arbitrary, we see that 
\[
\psi\circ \iota (\pmcg(F)) \subset \bigcap_{\lambda \in \Lambda(S')}\ker \vp_\lambda
\]
implying that $ \iota(\pmcg(F))\subset \ker \psi$ by Lemma \ref{lem:Hausdorff}; hence, $\psi$ is not injective.
\end{proof}

Continuing with our progress towards Theorem \ref{thm:main}, we can now assume that if $\pmcg(S)$ and $\pmcg(S')$ are isomorphic, then they have the same genus.  We have one additional lemma before finishing the final proposition towards Theorem \ref{thm:main}. 

\begin{Lem}
\label{lem:preserve}
Let $S$ and $S'$ be surfaces such that their genera are finite, equal, and at least 4.
If $\Psi\co \pmcg(S) \to \pmcg(S')$ is an isomorphism, then $\Psi$ preserves Dehn twists.
\end{Lem}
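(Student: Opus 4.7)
The plan is to reduce the statement to a family of questions about homomorphisms between finite-type pure mapping class groups, where strong rigidity results are already available in the literature, and then to use the characterization of Dehn twists furnished by Proposition \ref{prop:characterization} to package these finite-type conclusions into a statement about $\pmcg(S)$. The characterization of Dehn twists is the bridge that makes this strategy work: rather than trying to algebraically detect a single Dehn twist in the big mapping class group directly, we detect its forgetful images in all finite-type quotients $\pmcg(S'_{\lambda'})$ and then invoke Proposition \ref{prop:characterization}.

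More concretely, fix a Dehn twist $T_c \in \sd$ and choose an essential finite-type subsurface $F \subset S$ containing $c$ with $\genus(F) = \genus(S)$. Let $\iota\co \pmcg(F) \hookrightarrow \pmcg(S)$ be the induced inclusion. For each $\lambda' \in \Lambda(S')$, form the composition
\[
\psi_{\lambda'} = \vp_{\lambda'} \circ \Psi \circ \iota \co \pmcg(F) \longrightarrow \pmcg(S'_{\lambda'}).
\]
Since $\genus(F) = \genus(S) = \genus(S') = \genus(S'_{\lambda'}) \geq 4$, this is a homomorphism between finite-type pure mapping class groups of equal genus at least 4. I would appeal to the classification of homomorphisms between finite-type mapping class groups (in the spirit of \cite{AramayonaHomomorphisms, Castel}, the same body of results already used in Proposition \ref{thm:genus}) to conclude that every such $\psi_{\lambda'}$ is either trivial or sends each Dehn twist to a Dehn twist. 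Either way, $\vp_{\lambda'}(\Psi(T_c)) = \psi_{\lambda'}(T_c) \in \overline{\sd_{\lambda'}}$.

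Since this holds for every $\lambda' \in \Lambda(S')$, Proposition \ref{prop:characterization} gives $\Psi(T_c) \in \overline{\sd} = \sd \cup \{\mathrm{id}\}$. Because $T_c$ is nontrivial and $\Psi$ is an isomorphism, $\Psi(T_c) \neq \mathrm{id}$, so $\Psi(T_c) \in \sd$ as desired. The same argument applied to $\Psi^{-1}$ (using that $\genus(S') = \genus(S) \geq 4$ as well) shows $\Psi^{-1}$ also sends Dehn twists into $\sd$, which is a nice consistency check but is not strictly needed for the statement.

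The main obstacle I anticipate is extracting the precise finite-type input: we must know that a homomorphism between finite-type pure mapping class groups of the same genus $\geq 4$ either is trivial or takes Dehn twists to Dehn twists (not merely to powers of Dehn twists, and not to more complicated reducible mapping classes). For large genus this is well documented, but one must verify that the relevant theorem in \cite{AramayonaHomomorphisms} or \cite{Castel} applies uniformly as $\lambda'$ varies, i.e. regardless of the (arbitrarily large) number of punctures of $S'_{\lambda'}$. A secondary but minor point is that $F$ and $S'_{\lambda'}$ may have boundary, so one has to be slightly careful that the classification theorem invoked allows boundary and that the central boundary twists do not interfere; these are standard adjustments and should not change the structure of the argument.
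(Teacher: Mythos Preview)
Your proposal is correct and follows essentially the same approach as the paper: choose an essential finite-type subsurface $F$ of full genus containing $c$, compose $\iota$ with $\Psi$ and the forgetful maps $\vp_{\lambda'}$, invoke the Aramayona--Souto classification (\cite[Theorem~1.1]{AramayonaHomomorphisms}, with the genus-4 remark noted in the paper's footnote) to conclude each composite is trivial or induced by an embedding and hence sends Dehn twists into $\overline{\sd_{\lambda'}}$, and then apply Proposition~\ref{prop:characterization}. Your explicit exclusion of the identity case and your caveats about boundary and varying puncture number are apt; the paper handles these the same way, simply citing the cited theorem and its accompanying remark.
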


\begin{proof}
Let $T_c$ denote the Dehn twist along a simple closed curve $c$.
Choose an essential finite-type surface $F\subset S$ such that $c\subset F$ and $\genus(F) = \genus(S)$.
The inclusion $F \hookrightarrow S$ induces a homomorphism $\iota\co\pmcg(F) \to \pmcg(S)$. 
For each $\lambda \in \Lambda(S')$ we define $\iota_\lambda\co \pmcg(F) \to \pmcg(S'_\lambda)$ to be the composition $\vp_\lambda \circ \Psi \circ \iota$.
If \( \iota_\lambda \) is nontrivial, then \cite[Theorem~1.1]{AramayonaHomomorphisms} implies $\iota_\lambda$ is induced by an inclusion $F \hookrightarrow S'_\lambda$ (note that this is where the lower bound on genus is required\footnote{The statement of \cite[Theorem~1.1]{AramayonaHomomorphisms} requires the genus to be at least 6; however, the remark immediately following the theorem statement in the article states that this requirement can be decreased to 4 under our assumptions.}).

In particular, $\iota_\lambda$ sends Dehn twists in $\pmcg(F)$ into $\overline \sd_\lambda$.
As $T_c$ is the image of a Dehn twist in $\pmcg(F)$ under $\iota$, we have that $\vp_\lambda \circ \Psi (T_c) \in \overline \sd_\lambda$ for each $\lambda \in \Lambda(S')$. 
By Proposition \ref{prop:characterization}, $\Psi(T_c)$ is a Dehn twist.
\end{proof}

\begin{Prop}
\label{prop:homeo}
Let $S$ and $S'$ be surfaces.
If the genera of $S$ and $S'$ are finite, equal, and at least 4, then any isomorphism between $\pmcg(S)$ and $\pmcg(S')$ is induced by a homeomorphism.
\end{Prop}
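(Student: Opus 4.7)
The plan is to manufacture a homeomorphism \( h : S \to S' \) directly inducing \( \Psi \), following Ivanov's strategy adapted to the infinite-type setting. Since \( \Psi \) and \( \Psi^{-1} \) both send Dehn twists to Dehn twists by Lemma \ref{lem:preserve}, the rule \( \Psi(T_c) = T_{\Psi_\ast(c)} \) determines a bijection \( \Psi_\ast \) between isotopy classes of essential simple closed curves on \( S \) and on \( S' \). Using that two Dehn twists about distinct essential simple closed curves commute precisely when the curves are disjoint, \( \Psi_\ast \) preserves disjointness.

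Next I would build \( h \) from a finite-type exhaustion. Fix a nested exhaustion \( F_1 \subset F_2 \subset \cdots \) of \( S \) by essential finite-type subsurfaces with \( \genus(F_n) = \genus(S) \), and let \( \iota_n \co \pmcg(F_n) \hookrightarrow \pmcg(S) \) be the induced inclusion. Arguing as in the proof of Lemma \ref{lem:preserve}, choose nested \( \lambda_1 \subset \lambda_2 \subset \cdots \) in \( \Lambda(S') \) so that \( \vp_{\lambda_n} \circ \Psi \circ \iota_n \) is nontrivial. Since \( \genus(F_n) = \genus(S'_{\lambda_n}) \geq 4 \), the strengthened form of \cite[Theorem~1.1]{AramayonaHomomorphisms} cited inside Lemma \ref{lem:preserve} implies each such composition is induced by an incompressible embedding \( j_n \co F_n \hookrightarrow S'_{\lambda_n} \); the isotopy class of \( j_n \) is pinned down by \( \Psi_\ast \), since \( j_n \) must carry every simple closed curve \( c \subset F_n \) to a representative of \( \Psi_\ast(c) \). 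Consequently \( j_{n+1}|_{F_n} \) agrees with \( j_n \) up to ambient isotopy after composing with the forgetful projection \( S'_{\lambda_{n+1}} \to S'_{\lambda_n} \). A diagonal isotopy assembles the \( j_n \) into a proper embedding \( j \co \bigcup_n F_n \hookrightarrow S' \), and applying the symmetric argument to \( \Psi^{-1} \) identifies the complement of \( j(\bigcup_n F_n) \) with a union of punctures and regular neighborhoods of planar ends of \( S' \). An application of Proposition \ref{prop:extension} and Corollary \ref{cor:extension} then extends \( j \) to a homeomorphism \( h \co S \to S' \) realizing \( \Psi_\ast \) on isotopy classes of curves.

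To conclude \( \Psi \) is induced by \( h \), let \( h_\ast \co \pmcg(S) \to \pmcg(S') \) be the isomorphism defined by \( h \). For any \( f \in \pmcg(S) \) and any essential simple closed curve \( c \) in \( S \),
\[
T_{\Psi(f)(h(c))} \;=\; \Psi(f)\, T_{h(c)}\, \Psi(f)^{-1} \;=\; \Psi(f\, T_c\, f^{-1}) \;=\; \Psi(T_{f(c)}) \;=\; T_{h(f(c))},
\]
so the mapping class \( h^{-1} \circ \Psi(f) \circ h \in \pmcg(S) \) fixes every isotopy class of essential simple closed curve. By Lemma \ref{lem:hernandez} (applied directly in the borderless case, or after capping boundary components by once-punctured disks), this mapping class must be trivial, giving \( \Psi(f) = h_\ast(f) \).

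The main obstacle will be the patching step. The inclusions \( j_n \) land in the punctured surfaces \( S'_{\lambda_n} \) rather than in \( S' \) itself, and it is not a priori evident that their images combine into a proper embedding of the core of \( S \) whose complement in \( S' \) consists only of neighborhoods of planar ends. The essential leverage comes from \( \Psi_\ast \) being a globally defined disjointness-preserving bijection on curves, together with the symmetric application of \( \Psi^{-1} \), which together prevent any end accumulated by genus of \( S' \) from being missed by the image of \( j \).
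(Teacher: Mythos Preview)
Your overall strategy---extract a curve bijection from Lemma~\ref{lem:preserve}, exhaust by finite-type pieces, invoke Aramayona--Souto on each piece, then patch---matches the paper's. The genuine gap is exactly where you flag it: the patching step does not work as written.

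The embeddings \( j_n \co F_n \hookrightarrow S'_{\lambda_n} \) land in finite-type surfaces that \emph{contain} \( S' \) (since \( S' = \overline{S'} \smallsetminus \Ends(S') \subset \overline{S'} \smallsetminus \lambda_n = S'_{\lambda_n} \)), not in surfaces that \( S' \) maps onto. So there is no reason \( j_n(F_n) \) should lie inside \( S' \); it may run through points of \( \Ends(S') \smallsetminus \lambda_n \). The compatibility you record---that \( j_{n+1}|_{F_n} \) agrees with \( j_n \) after the forgetful projection \( S'_{\lambda_{n+1}} \to S'_{\lambda_n} \)---is a statement about the quotients, not about \( S' \), and your ``diagonal isotopy'' would have to simultaneously push each \( j_n(F_n) \) off \emph{all} ends of \( S' \), which is precisely the lifting problem you have not solved. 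Your appeal to Proposition~\ref{prop:extension} and Corollary~\ref{cor:extension} is also misplaced: those results extend homeomorphisms of totally disconnected sets to disks, not embeddings of surfaces to larger surfaces.

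The paper avoids this entirely by never leaving \( S' \). Given Dehn-twist generators \( T_{d_1}, \ldots, T_{d_m} \) for \( \pmcg(F_n) \), Lemma~\ref{lem:preserve} already gives curves \( d_1', \ldots, d_m' \) \emph{in \( S' \) itself} with \( \Psi(T_{d_i}) = T_{d_i'} \). The image piece \( R_n \subset S' \) is then the regular neighborhood of \( \bigcup_i d_i' \) (with trivial and peripheral boundary components capped). The real work is showing \( R_n \cong F_n \): equal genus via \cite[Proposition~7.1]{AramayonaHomomorphisms}, equal number of boundary components by comparing the center of the centralizer of \( \pmcg(F_n) \) with that of \( \pmcg(R_n) \), and equal number of punctures via \cite{Castel}. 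Then \cite[Theorem~1.1]{AramayonaHomomorphisms} supplies the homeomorphism \( \psi_n \co F_n \to R_n \) inducing \( \Psi|_{\pmcg(F_n)} \), and the \( R_n \) exhaust \( S' \) because every Dehn twist in \( \pmcg_c(S') \) lies in some \( \pmcg(R_n) \). You already have the key object \( \Psi_\ast \); the fix is to use it to build the \( R_n \) directly in \( S' \) rather than routing through the forgetful maps.
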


\begin{proof}
Let $\Psi \co \pmcg(S) \to \pmcg(S')$ be an isomorphism.
Observe that the centers $Z(\pmcg(S))$ and $Z(\pmcg(S'))$ of $\pmcg(S)$ and $\pmcg(S')$, respectively, are isomorphic, and therefore, $S$ and $S'$ have the same number of boundary components by Lemma \ref{lem:center}. 
By applying Lemma \ref{lem:preserve} to both $\Psi$ and $\Psi^{-1}$, we see that $\Psi$ restricts to an isomorphism from $\pmcg_c(S)$ to $\pmcg_c(S')$.

Let $F_1 \subset F_2 \subset \cdots$ be an exhaustion of $S$ by essential finite-type surfaces such $\genus(F_n) = \genus(S)$  and \( \partial S \subset \partial F_n \) for each $n\in \bn$.
Recall that essential surfaces are connected by definition. 
It follows that the direct limit of the groups $\{\pmcg(F_n)\}_{n\in \bn}$ with respect to the natural inclusions is $\pmcg_c(S)$.

Fix \( n \in \bn \) and let \( d_1, \ldots, d_m \) be collection of essential simple closed curves in \( F_n \) such that the set \( \{ T_{d_1}, \ldots, T_{d_m}\} \) of Dehn twists generates \( \pmcg(F_n) \).
For each \( i \in \{ 1, \ldots, m \} \) let \( d_i' \) be the simple closed curve in \( S' \) such that \( \Psi(T_{d_i}) = T_{d_i'} \); the existence of \( d_i' \) is guaranteed by Lemma \ref{lem:preserve}.
Let \( R' \) be a regular neighborhood of \( d_1' \cup \cdots \cup d_m' \).
It is possible that one or more boundary components of \( R' \) bounds either a disk or once-punctured disk in \( S' \).
If this is the case, glue the associated disk or once-punctured disk onto these boundary components; we call the resulting surface \( R_n \).

%Since $\pmcg(F_n)$ is finitely-generated, we see that there exists a minimal essential finite-type surface $R_n \subset S'$ such that $\Psi(\pmcg(F_n))$ is contained in $\pmcg(R_n)$.
%Observe that $\Psi(\pmcg(F_n))$ is generated by Dehn twists along a collection of curves $\Gamma$ in $S'$ and $R_n$ is the minimal surface filled by this collection of curves. 

Observe that since any Dehn twist about a boundary component of \( S' \) can be written as a product of Dehn twists about curves contained in \( R_n \), we can conclude that \( \partial S' \subset \partial R_n \).
We now claim that the surface \( R_n \) is connected:  Indeed, observe that \( d_i \) and \( d_j \) are disjoint if and only if \( d_i' \) and \( d_j' \) are disjoint.
Using that \( \Psi \) is an isomorphism, the above fact follows from the fact that \( T_{d_i} \) and \( T_{d_j} \) commute if and only if \( d_i \) and \( d_j \) are disjoint.
It now follows that \( d_1' \cup \cdots \cup d_m' \) is connected as \( d_1 \cup \cdots \cup d_m \) is connected; hence, \( R_n \) is connected.

%The surface $R_n$ must also be connected: Indeed, if not, then every element of $\pmcg(F_n)/Z(\pmcg(F_n))$ would have a non-cyclic centralizer, a contradiction.
As $\genus(F_n) = \genus(S) = \genus(S')$, we have (again by \cite[Proposition~7.1]{AramayonaHomomorphisms}) that $\genus(R_n) = \genus(F_n)$.
Now if \( f \in \pmcg(S') \) is in the centralizer of \( \pmcg(R_n) \), then \( f \) commutes with \( T_{d_i'} \) for each \( i \in \{1, \ldots, m\} \); in particular, \( f \) is in the centralizer of \( \Psi(\pmcg(F_n)) \).
Conversely, if \( f \in \pmcg(S') \) is in the centralizer of \( \Psi(\pmcg(F_n)) \), then again \( f \) commutes with the Dehn twist \( T_{d_i'} \) for every \( i \in \{1, \ldots, m\} \); therefore, as the collection of curves \( \{d_1', \ldots, d_m'\} \) is filling in \( R_n \), we can conclude that \( f \) is in the centralizer of \( \pmcg(R_n) \).
The above discussion also implies that an element in the centralizer of \( \pmcg(R_n) \) is supported in the complement of the interior of \( R_n \); therefore, by Lemma \ref{lem:center}, the center of the centralizer of \( \pmcg(R_n) \) is generated by the boundary components of \( R_n \).
In particular, the rank of the center of the centralizer of \( \pmcg(R_n) \) is the number of boundary components of \( R_n \) (the same holds for \( F_n \) and \( \pmcg(F_n) \)). 
Now the center of the centralizer of \( \pmcg(F_n) \) and that of \( \Psi(\pmcg(F_n)) \) are isomorphic; it follows from the above that \( F_n \) and \( R_n \) have the same number of boundary components.

%Using the Alexander method and the fact that $\{d_1', \ldots, d_m'\}$ is a filling collection of curves in $R_n$, we conclude that the centers of the centralizers of $\Psi(\pmcg(F_n))$ and $\pmcg(R_n)$ are isomorphic. 
%Since $S$ and $S'$ have the same number of boundary components, we can now conclude that $F_n$ and $R_n$ have the same number of boundary components.  

Let $Z_n$ denote the center of $\pmcg(F_n)$.
From the preceding discussion, we have established that the center of the centralizer of \( \pmcg(R_n) \) is the center of \( \pmcg(R_n) \).
The same holds for \( \pmcg(F_n) \) in \( \pmcg(S) \). 
Further, we can conclude that \( \Psi(Z_n) \) is the center of \( \pmcg(R_n) \).
%By the construction of $R_n$, we have that $\Psi(Z_n)$ is the center of $\pmcg(R_n)$.
This implies that $\Psi$ induces a monomorphism from $\pmcg(F_n)/Z_n$ to $\pmcg(R_n)/\Psi(Z_n)$. 
Observe that $\pmcg(F_n)/Z_n$ is the pure mapping class group of the surface obtained by capping off the boundary components of $F_n$ with once-punctured disks and the same is true for $\pmcg(R_n)/\Psi(Z_n)$ and $R_n$.
It follows from \cite[Theorem 1.2.7]{Castel} plus that fact that $F_n$ and $R_n$ have the same number of boundary components that they also have the same number of punctures; in particular, $F_n$ and $R_n$ are homeomorphic.

We can now apply \cite[Theorem 1.1]{AramayonaHomomorphisms} to see that $\Psi_n :=\Psi|_{\pmcg(F_n)}$ is induced by an embedding $\psi_n\co F_n \hookrightarrow R_n$; it follows from our discussion thus far that $\psi_n$ is in fact a homeomorphism and $\Psi_n$ an isomorphism.
As $\Psi|_{\pmcg_c(S)}$ is an isomorphism to $\pmcg_c(S')$, it follows that $\pmcg_c(S')$ is the direct limit of groups $\{\pmcg(R_n)\}_{n\in \bn}$.
It also follows that \(R_1 \subset R_2 \subset \cdots\) is an exhaustion of $S'$ by essential finite-type surfaces: indeed, if not, then there an essential simple closed curve in the complement of \( \bigcup_{n\in \bn} R_n \), but this would imply that the Dehn twist about this curve is not an element of \( \bigcup_{n\in \bn} \pmcg(R_n) \), a contradiction.
We can now conclude that $S'$ is the direct limit of the spaces $\{R_n\}_{n\in\bn}$ with respect to inclusion.
Note that $S$ is the direct limit of the spaces $\{F_n\}_{n\in\bn}$ as well. 
Given the construction of $\psi_n$, we can build the following commutative diagram where the indexed $i$ and $j$'s are the natural inclusions and the natural numbers $n$ and $m$ satisfy $m>n$:
\[
\xymatrix{
& F_n \ar[dl]^{\psi_n} \ar[rr]_{i_{m,n}} \ar[dr]_{i_n} & & F_m \ar[dr]_{\psi_m}  \ar[dl]^{i_m} &  \\
R_n \ar[ddrr]^{j_n} & & S \ar@{.>}[dd]|-\psi & & R_m \ar[ddll]_{j_m} \\
&&&& \\
& & S' & &
}
\]
The existence of the unique continuous map $\psi\co S \to S'$ is guaranteed by the universal property of direct limits.  
If we repeat the above process starting with $\Psi^{-1}$, we see that $\psi$ is the desired homeomorphism between $S$ and $S'$. 
\end{proof}

Observe that after the immediate application of Lemma \ref{lem:preserve} in the proof  Proposition \ref{prop:homeo}, the proof reduces to proving the analogous statement for $\pmcg_c(S)$.  
The inspiration for this portion comes from \cite[Proposition 9.1]{AramayonaAsymptotic} in which the authors show that an automorphism of $\pmcg_c(S)$ -- in the special case where $S$ has finite genus and $\Ends(S)$ is a Cantor set -- preserves Dehn twists.

As pointed out in the introduction, Corollary \ref{cor:automorphism} is an immediate consequence of Proposition \ref{prop:homeo} and Lemma \ref{lem:center}, that is, for a borderless surface \( S \) we have
\[
\Aut(\pmcg(S)) = \mcg^\pm(S)
\]
when $4\leq \genus(S) < \infty$.
We point out an alternative proof:
By Lemma \ref{lem:preserve}, an automorphism permutes the Dehn twists and hence induces an automorphism of the curve graph (two curves are disjoint if and only if their associated Dehn twists commute).
In an recent paper \cite{Hernandez2}, it is shown that the automorphism group of the curve graph is the extended mapping class group.
One can then follow a -- by now -- standard argument to obtain Corollary \ref{cor:automorphism}.
(This is the route taken in the recent work \cite{BavardIsomorphism} in which Question \ref{ques:mainquestion} is answered.)

As a corollary to Corollary \ref{cor:automorphism}, we see the topological structure of $\Ends(S)$ becoming apparent in the algebra of $\pmcg(S)$.  

\begin{Cor}
\label{cor:ends}
If $S$ is a borderless surface whose genus is finite and  at least 4, then $\out(\pmcg(S))$  is isomorphic to $\homeo(\Ends(S)) \times \bz/2\bz$.
\end{Cor}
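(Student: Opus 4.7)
The plan is to realize \( \out(\pmcg(S)) \) as a split extension of \( \bz/2\bz \) by \( \homeo(\Ends(S)) \) with trivial action, and then read off the direct product decomposition.

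By Corollary \ref{cor:automorphism}, \( \Aut(\pmcg(S)) = \mcg^\pm(S) \), and since \( S \) is borderless Lemma \ref{lem:center} gives \( Z(\pmcg(S)) = 1 \); hence the inner automorphism subgroup is canonically \( \pmcg(S) \), and \( \out(\pmcg(S)) = \mcg^\pm(S)/\pmcg(S) \). The filtration \( \pmcg(S) \subset \mcg(S) \subset \mcg^\pm(S) \) then produces the short exact sequence
\[
1 \to \mcg(S)/\pmcg(S) \to \out(\pmcg(S)) \to \bz/2\bz \to 1,
\]
and the restriction map \( \mcg(S) \to \homeo(\Ends(S)) \), whose kernel is \( \pmcg(S) \) by definition and which is surjective by Corollary \ref{cor:extension}, identifies the left-hand quotient with \( \homeo(\Ends(S)) \).

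The key geometric step is to produce an orientation-reversing involution of \( S \) fixing every end pointwise. I would realize \( \overline S \) as the double \( \Sigma \cup_{\partial \Sigma} \Sigma \) of a sphere with \( \genus(S)+1 \) open disks removed; the swap of the two copies is an orientation-reversing involution \( \rho \) of \( \overline S \) with \( \rho^2 = id \) that fixes \( \partial \Sigma \) pointwise. Using Ker\'{e}kj\'{a}rt\'{o}'s classification, I would choose the identification of \( \overline S \) with this double so that \( \Ends(S) \subset \partial \Sigma \), which is possible because any closed subspace of a Cantor set embeds in a circle. The restriction \( r := \rho|_S \) is then an orientation-reversing involution of \( S \) acting trivially on \( \Ends(S) \); its class in \( \out(\pmcg(S)) \) has order two and projects to the generator of \( \bz/2\bz \), splitting the sequence above.

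Finally, for any \( g \in \mcg(S) \) with induced homeomorphism \( h \) on \( \Ends(S) \), the conjugate \( rgr^{-1} \) induces \( r \circ h \circ r^{-1} = h \) because \( r|_{\Ends(S)} = id \); hence \( [r] \) commutes with every element of \( \mcg(S)/\pmcg(S) \) in \( \out(\pmcg(S)) \), so the conjugation action of \( \bz/2\bz \) on \( \homeo(\Ends(S)) \) is trivial. The main obstacle is the geometric construction of \( r \): one must position \( \Ends(S) \) within the fixed locus of a single orientation-reversing involution of \( \overline S \), which is exactly what the double construction accomplishes.
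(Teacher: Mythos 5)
Your proposal is correct and follows essentially the same route as the paper: identify \( \out(\pmcg(S)) \) with \( \mcg^\pm(S)/\pmcg(S) \) via Corollary \ref{cor:automorphism} and Lemma \ref{lem:center}, use Corollary \ref{cor:extension} to see \( \mcg(S)/\pmcg(S) \cong \homeo(\Ends(S)) \), and split off \( \bz/2\bz \) with an orientation-reversing involution fixing every end. Your explicit double construction of that involution, and your check that its conjugation action on \( \homeo(\Ends(S)) \) is trivial, just fill in details the paper asserts without elaboration.
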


\begin{proof}
Corollary \ref{cor:extension} tells us the homomorphism $\mcg(S) \to \homeo(\Ends(S))$ is surjective and by extension $\mcg^\pm(S) \to \homeo(\Ends(S))$ is also surjective.
%Note that if the boundary of $S$ is nonempty, then the result follows immediately from Corollary \ref{cor:automorphism} and Corollary \ref{cor:extension} as, by our definition, $\mcg(S) = \mcg^\pm(S)$. 
%For the remainder assume that the boundary of $S$ is empty. 
It follows that there exists an order-two orientation-reversing homeomorphism fixing all the ends of $S$; let $f \in \mcg^\pm(S)$ be such an element.
It follows that
\[
\mcg^\pm(S) = \mcg(S) \rtimes \bz/2\bz,
\]
where the $\bz/2\bz$ factor is identified with $\langle f \rangle$.
It easily follows from Corollary \ref{cor:automorphism}  and Corollary \ref{cor:extension} that
\(
\out(\pmcg(S)) = \mcg^\pm(S) / \pmcg(S)  = \homeo(\Ends(S)) \times \bz/2\bz.
\)
%The corollary now follows from the fact that the center of $\homeo(\Ends(S))$ is trivial.
%\nv{THE FOLLOWING SENTENCE IS FALSE.}
%To see this simply note that if two elements of $\homeo(\Ends(S))$ commute then they must have the same set of fixed points.
%As the stabilizer of every point in $\Ends(S)$ is nontrivial, an element in the center must fix every point and is thus the identity.
\end{proof}

Given Corollary \ref{cor:ends}, it is natural to ask if the isomorphism type of $\out(\pmcg(S))$ determines the homeomorphism type of $\Ends(S)$.
This is equivalent to asking: given a closed subset $E$ of the Cantor set, does the isomorphism type of $\homeo(E)$ determine the homeomorphism type of $E$?

Note that there is an easy counterexample:  if $E = C \sqcup X$ where $X$ is a one-point space and $C$ is a Cantor space, then $\homeo(C) \cong \homeo(E)$.
It was originally conjectured by Monk \cite{MonkAutomorphism} that this was the only counterexample; however, another counterexample was given by McKenzie \cite[Theorem 6]{McKenzieAutomorphism}.
On the other hand, McKenzie \cite[Theorem 5]{McKenzieAutomorphism} showed the conjecture holds in a large subclass, namely if the closure of the isolated points in $E$ is clopen, then $\homeo(E)$ determines $E$.
(The work of Monk and McKenzie is in the category of boolean algebras; one can translate to the language above using Stone's representation theorem.)

In light of Theorem \ref{thm:main}, it is natural to pose the following:

\begin{Problem}
Find correspondences between algebraic invariants of $\pmcg(S)$ and topological invariants of $S$.
\end{Problem}

As an example, we highlight the work of Monk:

\begin{Prop}[{\cite[Corollary 2.1, Theorem 3, \& Theorem 5]{MonkAutomorphism}}]
Let $E$ be a closed subset of the Cantor set with infinite cardinality.
\begin{enumerate}
\item $\homeo(E)$ is simple if and only if $E$ is homeomorphic to either a Cantor space or the disjoint union of a Cantor space and the one-point space.
\item If $m \in \bn$ with $1<m<\infty$, then $E$ is homeomorphic to $C \sqcup \{p_1, \ldots, p_m \}$ if and only if $\homeo(E) \cong \mathrm{Sym}_m \times \homeo(C)$, where $C$ is a Cantor space. 
\item $\homeo(E)$ has exactly two proper normal subgroups if and only if $E$ is homeomorphic to the one-point compactification of $\bn$.
\end{enumerate}
\end{Prop}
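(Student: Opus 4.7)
The strategy is to analyze $\homeo(E)$ through the canonical Cantor-Bendixson decomposition $E = I(E) \sqcup E'$, where $I(E)$ is the set of isolated points and $E'$ is the derived set. Every homeomorphism preserves this splitting, inducing a homomorphism $\rho\co \homeo(E) \to \sym(I(E))$ whose kernel setwise stabilizes $E'$. The main external inputs will be Brouwer's characterization of the Cantor set $C$ as the unique compact perfect totally-disconnected metrizable space, Anderson's classical theorem that $\homeo(C)$ is simple, and the Schreier-Ulam classification identifying the proper nontrivial normal subgroups of $\sym(\bn)$ as exactly the finitary alternating group $A_{\mathrm{fin}}$ and the finitary symmetric group $S_{\mathrm{fin}}$.

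For Part (1), if $E$ is perfect then $E \cong C$ and Anderson gives simplicity; if $E$ has exactly one isolated point $p$, then $\{p\}$ is clopen, $E \ssm \{p\} \cong C$, and $p$ is fixed by every homeomorphism, so $\homeo(E) \cong \homeo(C)$ is simple. Conversely, if $E \not\cong C$ and $E \not\cong C \sqcup \{*\}$, then either $|I(E)| \geq 2$ or $E'$ properly contains a clopen Cantor piece disjoint from the closure of $I(E)$; in either case one exhibits a proper nontrivial normal subgroup of $\homeo(E)$, for instance the pointwise stabilizer of $E'$, which is nontrivial (a transposition of two isolated points lying in disjoint clopen neighborhoods extends by the identity) and proper (some homeomorphism supported on a clopen perfect piece of $E'$ lies outside it). For Part (2), when $E = C \sqcup \{p_1, \ldots, p_m\}$ the Cantor summand is topologically distinguished as the unique maximal perfect clopen subset, yielding $\homeo(E) = \sym_m \times \homeo(C)$. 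Conversely, given an abstract isomorphism $\homeo(E) \cong \sym_m \times \homeo(C)$, one identifies the $\homeo(C)$ factor as the unique minimal nontrivial infinite normal subgroup (which is simple, by Anderson), forcing its support in $E$ to be a clopen Cantor piece; the finite complement of cardinality $m$ accommodates the $\sym_m$ factor.

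For Part (3), every bijection of $\bn$ sends cofinite subsets to cofinite subsets and hence extends uniquely to a homeomorphism of $\bn^+ := \bn \cup \{\infty\}$ fixing $\infty$, giving $\homeo(\bn^+) \cong \sym(\bn)$, which has exactly two proper nontrivial normal subgroups by Schreier-Ulam. For the converse, assume $\homeo(E)$ has exactly two proper nontrivial normal subgroups. Parts (1) and (2) eliminate $E \cong C$, $E \cong C \sqcup \{*\}$, and $E \cong C \sqcup \{p_1, \ldots, p_m\}$ for any finite $m \geq 2$ (the last because the normal-subgroup structure of $\sym_m \times \homeo(C)$ produces at least three proper nontrivial normal subgroups). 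If $E'$ were to contain either a Cantor piece or two distinct points, the additional factorization of $\rho$ into independent actions on separate clopen regions would produce further normal subgroups, contradicting the count. It follows that $E' = \{\infty\}$ is a single point and $I(E)$ is countably infinite and accumulates only at $\infty$, i.e., $E \cong \bn^+$.

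The principal obstacle in each converse direction is excluding exotic $E$ whose homeomorphism group might accidentally replicate the algebraic structure of one of the listed spaces. As noted following Corollary \ref{cor:ends}, $\homeo(E)$ does not in general determine $E$ (McKenzie's counterexample), so the argument must exploit the specific algebraic invariant under consideration --- simplicity, isomorphism to a concrete product, or the exact count of proper normal subgroups --- and cannot proceed via isomorphism type alone. Monk's treatment is conducted in the Stone-dual language of Boolean algebras, where normal subgroups of the automorphism group correspond to invariant ideals of clopen subsets and the Cantor-Bendixson filtration of $E$ translates to a derived filtration on ideals; this filtration analysis is what drives each of the three conclusions.
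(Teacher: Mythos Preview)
The paper does not supply a proof of this proposition; it is quoted verbatim from Monk \cite{MonkAutomorphism} (Corollary~2.1, Theorem~3, Theorem~5) and stated without argument. There is therefore nothing in the paper to compare your proposal against.

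That said, your sketch has a concrete gap in the converse of~(1). You propose the pointwise stabilizer of $E'$ as a proper nontrivial normal subgroup, arguing properness via ``some homeomorphism supported on a clopen perfect piece of $E'$.'' But take $E = \bn^+$, the one-point compactification of $\bn$: here $E' = \{\infty\}$ has no perfect piece, every homeomorphism fixes $\infty$, and your candidate subgroup is all of $\homeo(E)$. More generally, whenever $E'$ is a singleton (or has Cantor--Bendixson rank~$1$ with finite derived set), your construction fails to exhibit properness. The correct replacement in such cases is the subgroup of finitary permutations of $I(E)$, which you do invoke for~(3) but not here. Your converse for~(3) is also underspecified: the clause ``the additional factorization of $\rho$ into independent actions on separate clopen regions would produce further normal subgroups'' is a heuristic, not an argument, and ruling out spaces of higher Cantor--Bendixson rank (e.g., $\omega^2 + 1$, where $\homeo(E)$ has more than two proper normal subgroups for reasons involving the rank filtration) requires the ideal-theoretic machinery you allude to at the end.

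Your final paragraph correctly identifies that Monk's actual proofs proceed via Stone duality, translating the problem into one about automorphism groups of Boolean algebras and their invariant ideals; that is where the Cantor--Bendixson filtration does the real work. Your topological sketch captures the forward directions adequately but does not substitute for that analysis in the converse directions.
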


%%%%%%%%%%%%%%%%
% Compact-open topology
%%%%%%%%%%%%%%%%

\section{The compact-open topology}\label{sec:cpt}

We begin by restating the definition of the mapping class group: Equip $\homeo(S)$ with the compact-open topology, then
\[
\mcg(S) = \pi_0(\homeo^+(S, \partial S)),
\]
where \(\homeo^+(S, \partial S)\) is the group of orientation-preserving homeomorphisms of $S$ fixing $\partial S$ pointwise.
With this definition, $\mcg(S)$ comes equipped with a natural topology, namely the corresponding quotient topology, giving $\mcg(S)$ the structure of a topological group. 
If $S$ is a finite-type surface, then this is the discrete topology on $\mcg(S)$ (one way to see this is as a consequence of the Alexander method \cite[Proposition 2.8]{Primer}).
We set $\tau_q$ to be the corresponding subspace topology on $\pmcg(S)$.
It follows from Lemma \ref{lem:hernandez} that the identity component of \( \homeo^+(S) \) is closed when \( S \) is borderless.
This implies that the one-point sets in \( (\pmcg(S), \tau_q) \) are closed; it is a standard exercise to show that in a topological group this implies the group is Hausdorff.
(A similar argument works for surfaces with boundary.)

%\pp{that the identity component is closed in $(\pmcg(S), \tau_q)$ and for a topological group this is enough to conclude that }$\tau_q$ is Hausdorff; moreover, it is metrizable:
Moreover, it is metrizable:

\begin{Prop}
\label{prop:metrizable}
$(\pmcg(S), \tau_q)$ is metrizable.
\end{Prop}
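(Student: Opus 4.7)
The plan is to invoke the Birkhoff--Kakutani theorem, which states that a Hausdorff topological group is metrizable if and only if it is first countable at the identity. Hausdorffness of $(\pmcg(S),\tau_q)$ is recorded immediately before the proposition, so the only work is to exhibit a countable neighborhood basis at the identity; first countability then passes from $\mcg(S)$ to the subspace $\pmcg(S)$ and metrizability follows.

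To build such a basis, I would first recall that the surface $S$ is a connected, second countable, locally compact, Hausdorff space, so the compact-open topology on $C(S,S)$ is metrizable (it is generated by uniform convergence on the members of any compact exhaustion $K_1 \subset K_2 \subset \cdots$ with $K_n \subset \mathrm{int}(K_{n+1})$). Concretely, after fixing a metric $d$ on $S$, the sets
\[
W_n = \bigl\{\, f \in \homeo^+(S,\partial S) : \sup_{x \in K_n} d(f(x),x) < 1/n \,\bigr\}
\]
form a nested countable neighborhood basis of the identity in $\homeo^+(S,\partial S)$ with the compact-open topology.

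Next I would push this basis down through the quotient map $q\co \homeo^+(S,\partial S) \to \mcg(S)$. Since $\mcg(S)$ is $\homeo^+(S,\partial S)$ modulo the identity component $\homeo_0^+(S,\partial S)$, the map $q$ is a quotient map of topological groups by a subgroup, and therefore open. Openness of $q$ together with its continuity implies that $\{q(W_n)\}$ is a countable neighborhood basis of the identity in $\mcg(S)$: each $q(W_n)$ is open, and if $V$ is any neighborhood of the identity in $\mcg(S)$, then $q^{-1}(V)$ is an open neighborhood of the identity in $\homeo^+(S,\partial S)$ and hence contains some $W_n$, so $q(W_n) \subset V$. Since first countability is inherited by subspaces, $(\pmcg(S),\tau_q)$ is first countable and Hausdorff, and Birkhoff--Kakutani delivers a left-invariant compatible metric.

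The main technical point I expect to double-check is the metrizability (equivalently, first countability) of the compact-open topology on $\homeo^+(S,\partial S)$. Although this is standard for second countable locally compact Hausdorff spaces, for surfaces with noncompact boundary-respecting constraints one should verify that imposing $f|_{\partial S} = \mathrm{id}$ and orientation preservation only restricts to a closed subgroup, so the induced topology remains metrizable; the explicit basis $\{W_n\}$ above bypasses any subtleties by producing the countable basis directly, which is all Birkhoff--Kakutani requires.
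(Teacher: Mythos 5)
Your proposal is correct and follows essentially the same route as the paper: first countability of $\homeo(S,\partial S)$ in the compact-open topology, openness of the quotient onto the mapping class group, and then the Birkhoff--Kakutani theorem. The only difference is that you spell out the explicit countable basis $\{W_n\}$, which the paper leaves as a standard fact.
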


\begin{proof}
As $\homeo(S, \partial S)$ is first countable and $\mcg^\pm(S)$ is the image under an open quotient map, it follows that $(\pmcg(S), \tau_q)$ is first countable.
By the Birkhoff-Kakutani Theorem \cite{Birkhoff, Kakutani}, every first-countable Hausdorff topological group is metrizable.  
\end{proof}

Recall that for a finite-type surface, a combination of the Dehn-Lickorish theorem and the Birman exact sequence (see \cite[Chapter 4]{Primer}) tells us that $\pmcg(S)$ is generated by Dehn twists. 
In the infinite-type setting this of course cannot be true as $\pmcg(S)$ is uncountable; however, building off the result in the finite-type setting we are able to obtain topological generating sets for $\pmcg(S)$.
The remainder of this section is dedicated to proving Theorem \ref{thm:dense}.

Before we get to the proof of Theorem \ref{thm:dense}, we need to introduce a class of homeomorphisms contained in $\pmcg(S)$.
Consider the surface $\Sigma$ defined by taking the surface $\br \times [0,1]$, removing the interior of each disk of radius $\frac14$ with center of the form $(n, \frac12)$ for each $n \in \bz$, and attaching a torus with one boundary component to the boundary of each such disk. 
Let $\si$ be the homeomorphism of $\Sigma$  that behaves like $(x,y) \mapsto (x+1, y)$ on the interior of $\Sigma$ and tapers to the identity in a neighborhood of $\partial \Sigma$ (see Figure \ref{fig:genus-shift}).
One should think of $\si$ as obtained by sliding the $n^{th}$-handle (in the disk with center $(n, \frac12)$) horizontally until it reaches the position of the $(n+1)^{th}$-handle.

\begin{figure}
\center
\includegraphics{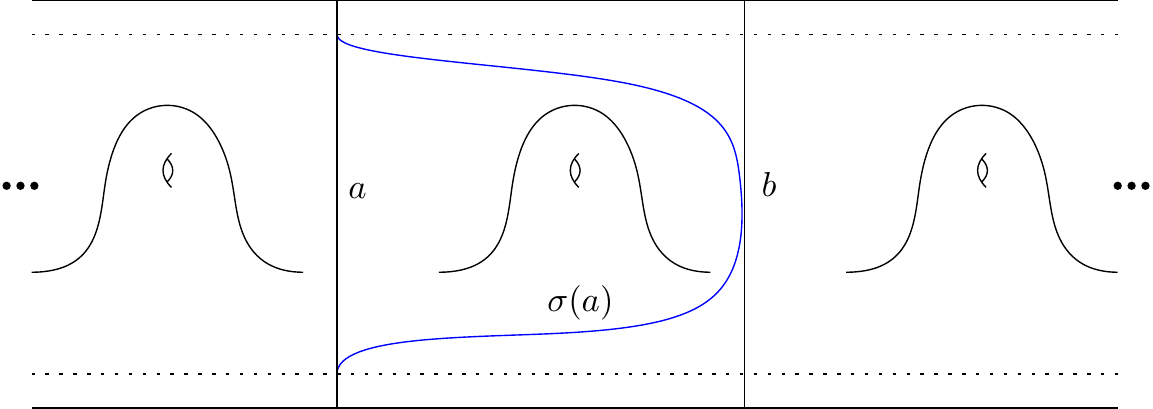}
\caption{Pictured here is the surface $\Sigma$.  The arc $\si(a)$ is the union of the blue arc and the black arc outside the dotted lines.  Inside $\Sigma$, the arcs $\si(a)$ and $b$ are homotopic.}
\label{fig:genus-shift}
\end{figure}

Given a surface $S$ with at least two ends accumulated by genus, consider an embedding of $\Sigma$ into $S$ inducing an injection $\Ends(\Sigma) \hookrightarrow \Ends(S)$. 
The homeomorphism $\si$ can then be extended to all of $S$ by the identity.
The homotopy class of the extension $\si$ defines an element of $\pmcg(S)$; we call such an element a \emph{handle shift}.

Towards Theorem \ref{thm:dense} we prove the following proposition. 

\begin{Prop}\label{prop:topgen}
The set of Dehn twists topologically generate $(\pmcg(S), \tau_q)$ when $S$ has at most one end accumulated by genus. If $S$ has at least two ends accumulated by genus, then the set of Dehn twists together with the set of handle shifts topologically generate $(\pmcg(S), \tau_q)$. 

\end{Prop}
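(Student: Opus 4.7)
A neighborhood basis of the identity in $(\pmcg(S), \tau_q)$ consists of sets $V_K = \{f \in \pmcg(S) : f \text{ admits a representative supported in } S \setminus K\}$, where $K$ ranges over compact essential subsurfaces of $S$. Hence, to show that a set $T \subset \pmcg(S)$ topologically generates $\pmcg(S)$, it suffices to show that for every $f \in \pmcg(S)$ and every compact essential subsurface $K \subset S$ there exists an element $g$ of the subgroup generated by $T$ whose restriction to $K$ agrees with $f$ up to isotopy. I would construct such a $g$ in three phases.

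\emph{Phase 1 (align subsurfaces).} Construct a pure mapping class $\phi$ with $\phi(f(K)) = K$ up to isotopy, matching the labellings of boundary components and complementary regions. Because $f$ is a homeomorphism fixing each end of $S$, every complementary component $U$ of $K$ has the same end-set and genus as $f(U)$, and hence by Ker\'ekj\'art\'o's classification the same topological type. When $S$ has at most one end accumulated by genus, at most one such complementary component carries infinite genus, and since genus can escape to infinity only in that single direction, the required realignment of $f(K)$ onto $K$ can be performed by a homeomorphism supported in a compact subsurface; take $\phi$ to be such. When $S$ has at least two ends accumulated by genus, $f$ may redistribute genus among the genus-accumulating ends, and no compactly supported $\phi$ need work; however, precomposing by an appropriate finite product $\tau$ of handle shifts cancels this shifting, reducing to the previous case and giving $\phi = \tau \cdot \phi_0$ with $\phi_0$ compactly supported.

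\emph{Phases 2 and 3 (adjust on $K$ and combine).} After Phase 1, $\phi \cdot f$ fixes $K$ setwise up to isotopy, and since $f$ fixes the ends adjacent to each boundary component of $K$ the mapping class $\phi \cdot f$ fixes each boundary component. Therefore $(\phi \cdot f)|_K$ defines an element of $\pmcg(K)$ which, by the Dehn--Lickorish theorem applied to the finite-type surface $K$, is a product $D$ of Dehn twists along simple closed curves in $K$. Setting $g = \phi^{-1} \cdot D$, the mapping class $g$ agrees with $f$ on $K$ up to isotopy; moreover, $g$ is a product of Dehn twists (from $D$, and from $\phi_0^{-1}$ after applying Dehn--Lickorish to its compact support) together with handle shifts (from $\tau^{-1}$), so $g$ lies in the required topologically generating subgroup.

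\emph{Main obstacle.} The crux is Phase 1. Although the topological invariants of $K$ and $f(K)$ together with their complementary regions match so that a pure mapping class realising the equivalence certainly exists, the nontrivial content is to realise this equivalence by a homeomorphism that is \emph{compactly supported} (up to handle shifts). In the at-most-one-end-accumulated-by-genus case one needs to check that all infinite-genus behaviour is channelled through the single genus-accumulating direction and can be absorbed by an isotopy rel boundary in a sufficiently large compact region. In the multi-end case one must embed the model surface used in the definition of a handle shift between each pair of genus-accumulating ends where $K$ and $f(K)$ disagree, and verify that handle shifts along these embeddings supply precisely the flexibility absent from compactly supported maps. Bookkeeping these corrections and confirming that no further obstruction arises is, I expect, the most delicate step.
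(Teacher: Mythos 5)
Your proposal is correct and follows essentially the same strategy as the paper's proof: approximate $f$ on an exhausting sequence of compact subsurfaces by first realigning $f(K)$ with $K$ via a compactly supported mapping class (possible by a genus count when at most one end is accumulated by genus, and after inserting handle shifts to cancel the redistribution of genus otherwise), then applying the Dehn--Lickorish theorem on the compact piece. The only cosmetic difference is that the paper organizes the realignment curve-by-curve along a pants decomposition rather than subsurface-by-subsurface, and it carries out exactly the bookkeeping you flag as the main obstacle by choosing a finite-type $F$ whose boundary curves are separating and whose complementary components all meet $\Ends(S)$, so that $a$ and $f(a)$ induce the same partitions of $\partial F$ and $\Ends(F)$ and cut $F$ into pieces of equal genus.
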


\begin{proof}
Let $\sd$ denote the collection of Dehn twists in $\pmcg(S)$. 
Since Dehn twists generate finite-type pure mapping class groups and $\pmcg_c(S)$ is a direct limit of finite-type pure mapping class groups, it follows that $\langle \sd \rangle = \pmcg_c(S)$.
In light of the finite-type results, we will assume $S$ is of infinite type. When $S$ has at least two ends accumulated by genus, let $\pmcg_h(S)$ denote the subgroup of $\pmcg(S)$ generated by Dehn twists and handle shifts.

Take $f \in \pmcg(S)$ to be an arbitrary element. 
Abusing notation, we will conflate $f$ with a representative homeomorphism.
Fix a pants decomposition $\{P_1, P_2, \ldots \}$ of $S$ labelled so that the surface $R_n = \bigcup_{i=1}^n P_i$ is connected. 
Choose a component, call it $a$, of $\partial P_1$ and let $F$ be a finite-type essential surface such that 

\begin{itemize}
\item both $a$ and $f(a)$ are contained in $F$,
\item each component of $\partial F$ is separating, and
\item each component of $\overline S \ssm F$ intersects $\Ends(S)$ nontrivially.
\end{itemize}

When $S$ has at most one end accumulated by genus, we additionally require $F$ to be such that 

\begin{itemize}
\item at most one component of $ S\ssm F$ has positive genus.
\end{itemize}

By the classification of surfaces, the $\pmcg(S)$-orbit of $a$, up to homotopy, is determined by its partition of $\Ends(S)$, its partition of $\partial S$, and the topological type of its complements.
(This is true for any surface and any simple closed curve in the surface.) 
Observe that the construction of $F$ guarantees that $a$ and $f(a)$ must induce the same partition of $\partial F$ and $\Ends(F)$ as well.

First, if $a$ is nonseparating, then both $a$ and $f(a)$ are nonseparating in $F$, so there exists $g_a \in\pmcg(F) <  \pmcg(S)$ such that $g_a(a) = f(a)$.
In addition, we require that \( g_a \) fixes an orientation on \( a \).

Now suppose $a$ is separating. In the case where $S$ has at most one end accumulated by genus, there is a component $U$ of $S\ssm a$ that has finite genus.
Let $W = U \cap F$ and $V = f(U) \cap F$.
By the construction of $F$, it follows that $\genus(W) = \genus(V)$, and therefore, the components of $F\ssm a$ and $F \ssm f(a)$ have the same topological type. 
In particular, there exists $g_a \in \pmcg(F) < \pmcg(S)$ such that $g_a(a) = f(a)$. In both of these cases, $g_a$ is supported on a finite-type surface and hence is contained in $\pmcg_c(S)$. 
%\pp{In the case where $a$ is non-separating, we choose $g_a$ so that $g_a^{-1} \circ f$ preserves an orientation of $a$.}

Since $g_a^{-1} \circ f$ fixes $a$ and both $f$ and $g_a$ fix $\Ends(S)$ pointwise, $g_a^{-1}\circ f$ restricts to a homeomorphism of each component of $S\ssm a$.
Let $b \neq a$ be another component of $\partial P_1$, and let $S'$ be the component of $S\ssm a$ containing $b$.
We can then repeat the above process with $S$ replaced by $S'$ and $f$ by $g_a^{-1} \circ f|_{S'}$.
If $P_1$ has a third boundary component, then we repeat this process with this final  component.
In either case, we have built a homeomorphism $g_1 \in \pmcg_c(S)$ such that $g_1(P_1) = f(P_1)$ and $g_1^{-1}\circ f$ fixes each boundary component of $P_1$.
Thus, $f_1 = g_1^{-1}\circ f$ restricts to the identity on $P_1 = R_1$.

Let $S_2$ be the component of $S\ssm P_1$ containing $P_2$.
Following the same reasoning as above and working in $\pmcg(S_2)$, we can find a product of Dehn twists in $\pmcg(S_2)$, call it $g_2'$, such that $g_2'^{-1}\circ f_1'$ is the identity when restricted to $P_2$, where $f_1'$ is the restriction of $f_1$ to $S_2$.
Choose an element $g_2''$ in the stabilizer of $P_1$ in $\pmcg_c(S)$ whose restriction to $S_2$ agrees with $g_2'$. 
Defining $g_2= g_1 \circ g_2'' \in \pmcg_c(S)$ we have that $g_2^{-1}\circ f$ restricts to the identity on $R_2$ after composing with Dehn twists about the components of $\partial P_1$ if necessary.

Continuing in this fashion, we construct $g_n \in \pmcg_c(S)$ such that $g_n^{-1} \circ f$ restricted to $R_n$ is the identity.
As $R_n$ is an exhaustion of $S$, we see that $g_n^{-1}\circ f$ converges to the identity; hence, $g_n \to f$ as desired.

Now suppose that $S$ has at least two ends accumulated by genus.
Note that we may no longer be able to choose $F$ to satisfy the fourth bullet point above. 
Let $a$ be a separating curve with each component of $S\ssm a$ being infinite genus and let $U$ be a component of $F \ssm a$. 
It is possible that $\genus(V) \neq \genus(W)$, where $W=U \cap F$ and $V=f(U) \cap F$.
If this is the case, there exists a handle shift $h \in \pmcg_h(S)$ where $h(a)$ induces the same partition of $\partial F$ and $\Ends(F)$ as $a$ and $f(a)$, where $h(a) \subset F$, and such that the genus of $h(U) \cap F$ and $V$ are equal.
Thus, there exists $g_a \in \pmcg(F) < \pmcg(S)$ such that $g_a \circ h(a) = f(a)$. 
Of course, $g_a \circ h\in \pmcg_h(S)$, so we can now repeat the argument above to produce $k_n \in \pmcg_h(S)$ such that $k_n \to f$.
\end{proof}

The next proposition shows that introducing handle shifts is essential when $S$ has at least two ends accumulated by genus. 

\begin{Prop}\label{prop:handle}
If $S$ has at least two ends accumulated by genus, then $\pmcg_c(S)$ is not dense in $(\pmcg(S), \tau_q)$. 
\end{Prop}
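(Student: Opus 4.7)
The plan is to argue by contradiction: suppose there is a sequence \((g_n) \subset \pmcg_c(S)\) with \(g_n \to \sigma\) in \(\tau_q\) for a fixed handle shift \(\sigma\), and extract a contradiction from a property of \(\sigma\) that no compactly supported class can mimic even in the limit. The underlying idea is that \(\sigma\) executes an infinite shift on an infinite family of curves, whereas every \(g_n\) has only finitely many curves in its ``active'' region.

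My first step is to pin down this infinite-orbit behavior concretely. Fix an embedding \(\Sigma \hookrightarrow S\) realizing \(\sigma\) and choose a simple closed curve \(c\) that is a cross-section of the cylindrical part of \(\Sigma\), so that the forward and backward \(\sigma\)-orbit \(c_k := \sigma^k(c)\), \(k \in \bz\), consists of pairwise disjoint curves with pairwise distinct isotopy classes marching toward the two ends of \(\Sigma\). Since any isotopy class is realized by a representative lying in a compact set, and the \(\tau_q\)-topology is by definition the quotient of the compact-open topology on \(\homeo^+(S,\partial S)\), the induced action of \((\pmcg(S), \tau_q)\) on the discrete set of isotopy classes of simple closed curves is continuous. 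Thus, for each \emph{fixed} \(k\), we would have \(g_n([c_k]) = [c_{k+1}]\) for all sufficiently large \(n\).

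Next I would exploit compact support. Choose for each \(n\) a representative of \(g_n\) supported on a compact subsurface \(K_n \subset S\); then \(g_n\) fixes the isotopy class \([c_k]\) whenever \(c_k \cap K_n = \emptyset\). Setting \(S_n := \{k : g_n([c_k]) = [c_{k+1}]\}\), each \(S_n\) is finite but eventually contains every prescribed integer, so \(L_n := \max S_n\) tends to \(\infty\). The crux is the ``transition index'' \(L_n\): one has \(g_n([c_{L_n}]) = [c_{L_n+1}]\) by definition, and if one can additionally force \(g_n([c_{L_n+1}]) = [c_{L_n+1}]\), then \(g_n\) sends two distinct isotopy classes to the same isotopy class, contradicting bijectivity of the induced action.

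The hard part will be justifying \(g_n([c_{L_n+1}]) = [c_{L_n+1}]\) at the transition, since \emph{a priori} \(g_n\) could deflect \([c_{L_n+1}]\) to some unrelated isotopy class. To handle this, I would upgrade curve-level convergence to subsurface-level convergence via the infinite-type Alexander method (Lemma \ref{lem:hernandez}): closeness of \(g_n\) and \(\sigma\) in \(\tau_q\) restricted to a large essential subsurface containing \(c_0, \ldots, c_{L_n+1}\) should force \(g_n\) to agree with \(\sigma\) there up to isotopy, which in turn places the support of the compactly supported representative entirely on one side of \(c_{L_n+1}\), giving the desired identity. As a cleaner alternative that sidesteps the index-chasing, one could try to construct a \(\tau_q\)-continuous homomorphism \(\mu\co \pmcg(S) \to \bz\) measuring the signed handle flux from one end accumulated by genus to another -- built from the genus of the finite-type region cobounded by \(c\) and \(f(c)\) -- which vanishes on \(\pmcg_c(S)\) but takes value \(\pm 1\) on \(\sigma\); the closedness of \(\ker \mu\) would then immediately prevent \(\sigma\) from lying in the \(\tau_q\)-closure of \(\pmcg_c(S)\).
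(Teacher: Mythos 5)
There is a genuine gap, and it sits exactly where you flag it: the transition index. Your setup is fine up to that point -- the action of $(\pmcg(S),\tau_q)$ on isotopy classes is indeed continuous, each $S_n$ is finite, and $L_n\to\infty$ -- but neither of your two proposed ways to force $g_n([c_{L_n+1}])=[c_{L_n+1}]$ works as stated. The Alexander-method route asks for $g_n$ to agree with $\sigma$ up to isotopy on a subsurface containing $c_0,\dots,c_{L_n+1}$; but $\tau_q$-convergence only controls $g_n$ on a compact set \emph{fixed in advance}, not on one growing with $n$, and worse, such agreement would give $g_n([c_{L_n+1}])=[c_{L_n+2}]$, contradicting the maximality of $L_n$ -- so the hypothesis you want to invoke is incompatible with the index you chose. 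The flux-homomorphism route is the right invariant, but as written it is a black box: for general $f$ the curves $c$ and $f(c)$ need not be disjoint, so ``the genus of the cobounded region'' is not yet defined, and verifying that the resulting $\mu\co\pmcg(S)\to\bz$ is a well-defined, $\tau_q$-continuous homomorphism is essentially the whole proof, not a remark. (Such homomorphisms do exist, but constructing them is a theorem, not a step.)

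The paper's proof shows that no limiting or index-chasing argument is needed, because the obstruction is already visible on a single curve and rules out \emph{every} element of $\pmcg_c(S)$ at once. Take a separating curve $a$ with both complementary components of infinite genus. Any $f\in\pmcg_c(S)$ is supported in a finite-type essential subsurface $F$ containing $a$ and $f(a)$, chosen so that each component of $S\ssm F$ meets $\Ends(S)$; since $f$ fixes the complement of $F$, the two sides of $a$ and of $f(a)$ carry the same boundary components of $F$ and the same genus, which forces either $[f(a)]=[a]$ or $i([a],[f(a)])>0$. A handle shift $h$ instead sends $a$ to a disjoint, non-isotopic curve $b$ (the two cobound a genus-$1$ piece), so the $\tau_q$-open set $\{g : g([a])=[b]\}$ contains $h$ and misses $\pmcg_c(S)$ entirely. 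This ``genus on each side relative to a finite-type support'' invariant is precisely the ingredient your argument is missing: with it, you would not need $L_n$ at all, since already $g_n([c_0])=[c_1]$ for large $n$ is impossible for compactly supported $g_n$.
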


\begin{proof}
We begin by describing a property of all elements of $\pmcg_c(S)$, and then show that handle shifts do not have this property and cannot be approximated by those elements that do.
Choose a separating simple closed curve $a$ such that each component of $S\ssm a$ is infinite genus.  
Let $f$ be an arbitrary element of $\pmcg_c(S)$.
Let $F$ be an essential finite-type subsurface of $S$ so that $f \in \pmcg(F) < \pmcg(S)$ and $F$ satisfies the first three bullet points in the proof of Proposition \ref{prop:topgen}.
Let $U$ be a component of $F\ssm a$, then $U$ and $f(U)$ contain the same components of $\partial F$.
Further, $\genus(U) = \genus(f(U))$, so either $[f(a)] = [a]$ or $i([a],[f(a)]) > 0$.

Now, let $b$ be a simple closed curve so that $a$ and $b$ co-bound a genus--1 subsurface of $S$.
It is not hard to see that there exists a handle shift $h\in \pmcg(S)$ with $h(a) = b$. 
Therefore, no element in $\pmcg_c(S)$ can agree with $h$ on the compact set $a$, since $b = h(a)$ and $a$ are distinct and disjoint; hence, $\pmcg_c(S)$ is not dense in $(\pmcg(S), \tau_q)$.
\end{proof}

Propositions \ref{prop:topgen} and \ref{prop:handle} together imply Theorem \ref{thm:dense}.

In Lemma \ref{lem:closure}, we gave the closure of the set of Dehn twists in $\tau_w$.
In the finite-genus setting, the analogous result holds for $\tau_q$ and is obtained by combining Lemma \ref{lem:closure} and Proposition \ref{prop:containment}.
Below we give a proof in general, regardless of whether the genus of $S$ is finite or infinite:

\begin{Prop}
\label{prop:closure2}
The closure of $\sd$ in $\tau_q$ is $\overline \sd = \sd \cup \{id\}$.
\end{Prop}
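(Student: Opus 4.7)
The plan is to establish both containments. First, $\sd \cup \{id\} \subset \overline \sd$: the inclusion $\sd \subset \overline \sd$ is trivial, and $id \in \overline \sd$ follows from the same exhaustion trick as in Lemma \ref{lem:closure}, namely, for an exhaustion $\{K_n\}$ of $S$ by compact essential subsurfaces, choose $b_n$ disjoint from $K_n$ so that $T_{b_n}|_{K_n} = id$ and hence $T_{b_n} \to id$ in $\tau_q$. For the reverse containment, when $S$ has finite genus the statement reduces immediately to Lemma \ref{lem:closure} via the inclusion $\tau_w \subset \tau_q$ supplied by Proposition \ref{prop:containment}; the general case below gives a direct argument that works regardless of genus.

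Suppose $T_{c_n} \to f$ in $\tau_q$ with $f \neq id$. I would split based on whether $f$ is central. If $f \in Z(\pmcg(S))$, then by Lemma \ref{lem:center} one can write $f = \prod_j T_{\partial_j}^{k_j}$. Fix a collar neighborhood $N$ of $\partial S$ consisting of disjoint annuli $N_j$, one around each boundary component $\partial_j$. Since $f$ preserves each $N_j$ up to isotopy and $T_{c_n} \to f$, eventually $T_{c_n}$ preserves each $N_j$ up to isotopy; this forces $c_n$ either to be disjoint from all of the $N_j$ up to isotopy, or to be parallel to some $\partial_i$. In the first case $T_{c_n}|_N = id$, forcing $f|_N = id$ and hence $f = id$, a contradiction; in the second case $T_{c_n}|_N = T_{\partial_i}$ for a single $i$, and matching $f|_N = \prod_j T_{\partial_j}^{k_j}$ forces exactly one $k_i = 1$ with all other $k_j = 0$, giving $f = T_{\partial_i} \in \sd$.

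If $f \notin Z(\pmcg(S))$, then by Lemma \ref{lem:center2} there is an essential simple closed curve $\alpha$ with $f(\alpha) \neq \alpha$ as isotopy classes. Choose an essential compact subsurface $F \subset S$ containing representatives of both $\alpha$ and $f(\alpha)$. Convergence in $\tau_q$ yields $T_{c_n}(\alpha) = f(\alpha)$ as isotopy classes for $n$ large. The key lemma I would establish is this: if $T_c(\alpha) = \beta$ with $\alpha \neq \beta$ and both $\alpha, \beta$ supported in a compact essential subsurface $F$, then $c$ is isotopic into $F$. The argument is that $\beta = T_c(\alpha)$ agrees with $\alpha$ outside a regular neighborhood $N(c)$, while inside $N(c)$ it consists of arcs each winding once around $c$ (one per intersection point of $\alpha \cap c$, of which there is at least one since $\alpha \neq \beta$); if $\beta \subset F$, then these winding arcs lie in $F$, and closing one with the corresponding sub-arc of $\alpha$ produces a loop in $F$ freely homotopic to $c$. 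By essentiality of $F$ the subgroup $\pi_1(F)$ injects into $\pi_1(S)$, and by the standard fact that free homotopy agrees with isotopy for essential simple closed curves in orientable surfaces, $c$ is itself isotopic into $F$. Applying the lemma, $c_n$ is eventually contained in $F$ up to isotopy, so $T_{c_n}$ lies in the image of the inclusion $\pmcg(F) \hookrightarrow \pmcg(S)$. Since $\pmcg(F)$ is a finite-type pure mapping class group, the subspace topology inherited from $\tau_q$ on its image is discrete, so $T_{c_n}$ is eventually constant equal to $T_c$ for some curve $c \subset F$, and $f = T_c \in \sd$.

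The main obstacle is the localization lemma above, which converts the pointwise information $T_{c_n}(\alpha) = f(\alpha)$ into the global constraint that $c_n$ itself must lie in $F$ up to isotopy. Once this is in hand, the argument collapses the convergent sequence of Dehn twists to something eventually constant via the discreteness of finite-type mapping class groups, forcing the limit to be a Dehn twist.
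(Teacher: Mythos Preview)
Your overall strategy---localize the curves $c_n$ to a fixed compact essential subsurface $F$ via a single curve $\alpha$ moved by $f$, then invoke discreteness of $\pmcg(F)$---is a reasonable alternative to the paper's approach, which instead argues by contradiction that $f \in \pmcg_c(S)$ by exhibiting boundary curves $b, b_n$ of an exhaustion and using the intersection-number formula to show that $f(b)$ would escape every compact set. However, your execution has two real gaps.

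Your localization lemma is correct, but the sketch is not: you conflate the isotopy class of $\beta = T_c(\alpha)$ with the specific Dehn-twist representative. Knowing that $\beta$ is \emph{isotopic into} $F$ does not place the winding arcs (which follow $c$) inside $F$, so you cannot close one up to produce a loop in $F$ freely homotopic to $c$. A valid proof runs via intersection numbers: if $c$ is not isotopic into $F$, then minimal position with $\partial F$ gives a component $d$ of $\partial F$ with $i(c,d)>0$; since $i(\alpha,d)=0$, \cite[Proposition~3.4]{Primer} yields $i(T_c(\alpha),d)=i(c,\alpha)\,i(c,d)>0$, contradicting $\beta \subset F$. This is essentially the computation the paper runs, just repackaged.

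Your central-case argument is vacuous. Every mapping class preserves a boundary collar $N_j$ up to isotopy (peripheral curves are fixed by everything), so the premise forces nothing; indeed every simple closed curve can be isotoped off the collar $N$, so you are always in your ``first case.'' But then ``$T_{c_n}|_N = id$ forces $f|_N = id$'' is not a valid deduction for mapping classes: restriction to a subsurface is not well-defined on isotopy classes, and the existence of representatives trivial on $N$ does not pass to $\tau_q$-limits. To separate, say, $T_{\partial_1}^2$ from the identity one must use arcs rather than closed curves, or else absorb this case into a compact-support argument as the paper does. Note also that you invoke Lemmas~\ref{lem:center2} and~\ref{lem:center} in the infinite-genus setting, where the paper only states and proves them for finite genus.
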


\begin{proof}
Let $K_1 \subset K_2 \subset \cdots$ be an exhaustion of $S$ by essential finite-type subsurfaces. 
We first see that the identity is in $\overline \sd$ (which is a repeated argument from the beginning of Lemma \ref{lem:closure}). 
Let $b_n$ be a simple closed curve contained in the complement of $K_n$, then the sequence $\{T_{b_n}\}_{n\in\bn} \subset \sd$ converges to the identity in $(\pmcg(S), \tau_q)$.

Following the proof outline of Lemma \ref{lem:closure}, we want to show that if a sequence $\{T_{a_i}\}_{i\in\bn} \subset \sd$ converges to a non-identity element $f\in (\pmcg(S), \tau_q)$, then $f$ is compactly supported.

Assume that $f \notin \pmcg_c(S)$.
It follows that there exists a natural number $N$ such that for every integer $n$ with $n \geq N$ there exists an integer $i_{n}$ such  that the curve $a_{i_{n}}$, up to homotopy, is contained in neither $K_n$ nor its complement and intersects $K_N$ nontrivially.

By possibly increasing $i_{n}$, we may assume that the curve $a_{i_{n}}$ has nontrivial intersection with a single boundary component of $\partial K_N$, call it $b$.
For each curve $a_{i_{n}}$, let $b_{n}$ be a component of $K_n$ such that $i(a_{i_{n}}, b_{n}) > 0$.
As the compact-open topology agrees with the topology of compact convergence, there exists $J\in \bn$ such that, up to homotopy, $f(b) = T_{a_j}(b)$ for all $j > J$.
Apply \cite[Proposition 3.4]{Primer} to obtain
\[
i(f(b), b_n) = i(T_{a_{i_n}}(b),b_n) = i(a_{i_n},b)i(a_{i_n},b_n) > 0,
\]
whenever $i_n> \max\{J,N\}$.
It follows that $f(b)$ leaves every compact set; hence, it must not be compact and $f$ cannot be a homeomorphism.
Therefore, $f \in \pmcg_c(S)$.

We can now find $m \in \bn$ such that all but finitely many of the $T_{a_i}$ are elements of  $\pmcg(K_m) < \pmcg_c(S)$.
The subspace topology on $\pmcg(K_m)$ is discrete, so the sequence $\{T_{a_i}\}_{i\in \bn}$ is eventually constant; in particular, $f$ is an element of $\overline \sd$. 
\end{proof}

Note that from the above proof we see that a sequence in $\sd$ converges to a Dehn twist in $\tau_q$ if and only if it is eventually constant.
This is not the case in $\tau_w$:
Consider the curves $a_n$ in Figure \ref{fig:converge} with the modification that the curve $a_n$ partitions off $\{0,1, n\}$ in $\bn\cup \{0\}$.
The sequence $\{T_{a_n}\} \subset \sd$ would then converge to $T_{a_1}$ in $(\pmcg(\bc\ssm(\bn\cup\{0\})),\tau_w)$, but would not converge in $\tau_q$.

%%%%%%%%%%%%%%%%%
% Inverse limits
%%%%%%%%%%%%%%%%%

\section{Inverse limits}
\label{sec:inverse}

For the entirety of this section, $S$ will be a finite-genus surface.
To simplify notation, for $\lambda \in \Lambda(S)$, we let 
\begin{align*}
G_\lambda &= \mathrm{PHomeo}^+(\overline S, \partial S, \lambda) \\
&= \{ f \in \homeo^+(\overline S, \partial S) : f(p) = p \text{ for every } p \in \lambda\}
\end{align*}
and 
\begin{align*}
G &= \mathrm{PHomeo}^+(\overline S, \partial S, \Ends(S)) \\
&= \{ f \in \homeo^+(\overline S, \partial S) : f(p)  = p \text{ for every } p \in \Ends(S)\}.
\end{align*}
Equip $\homeo(\overline S, \partial S)$ with the compact-open topology and equip $G$ and each $G_\lambda$ with the subspace topology.

Observe that $\homeo(S, \partial S)$ and $\homeo(\overline S, \partial S, \Ends(S))$ are isomorphic.
Furthermore, it is clear that the compact-open topology on the latter group must be at least as fine as the former.
However, the classical work of Arens \cite[Theorem 3]{ArensTopologies} tells us that the compact-open topology on the former group is the strongest admissible topology making it a topological group; hence, the two groups are topologically isomorphic.
In other words, we may treat the ends of $S$ as marked points.

The inclusion maps $\iota_{\lambda,\mu}\co G_\mu \hookrightarrow G_\lambda$ whenever $\lambda \subseteq \mu$ makes $(\{G_\lambda\}, \{\iota_{\lambda,\mu}\})$ an inverse system. 
We can then set $G_L$ to be the inverse limit and define $\iota_\lambda\co G_L \to G_\lambda$ to be the projections.
The initial topology on $G$ with respect to the collection $\{\iota_\lambda\}$ gives $G_L$ the structure of a topological group; this is the standard topology given to an inverse limit of topological groups.
With this topology, we see that $G_L$ is topologically isomorphic to the intersection of the $G_\lambda$, each endowed with the compact-open topology, so
\[
G_L \cong \bigcap_{\lambda \in \Lambda} G_\lambda  = G.
\]
For the remainder of the section we will identify $G$ and $G_L$.

As $\pmcg(S) = \pi_0(G)$, we want to investigate whether $\pmcg(S)$ has the structure of an inverse limit.
Similar to the above, the maps $\iota_{\lambda,\mu}$ induce forgetful maps $\vp_{\lambda, \mu} \co \pmcg(S_\mu) \to \pmcg(S_\lambda)$.

This defines an inverse system $(\{\pmcg(S_\lambda)\}, \{\vp_{\lambda,\mu}\})$ and we can take the inverse limit 
\[
L(S) = \varprojlim_{\lambda\in \Lambda} \pmcg(S_\lambda).
\]
Let $\pi_\lambda \co L(S) \to \pmcg(S_\lambda)$ be the projections defining $L(S)$.
Giving $\pmcg(S_\lambda)$ the discrete topology for each $\lambda \in \Lambda$,  we equip $L(S)$ with the initial topology with respect to $\{\pi_\lambda\}_{\lambda\in \Lambda}$.
The universal property of inverse limits gives a continuous homomorphism $\Phi\co (\pmcg(S), \tau_w) \to L(S)$.

\begin{Prop}
\label{prop:embedding}
$\Phi$ is a topological embedding.  Further, $\Phi$ is a topological isomorphism if and only if $S$ is of finite type.
\end{Prop}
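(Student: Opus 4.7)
The plan splits the proof into three steps: establishing that $\Phi$ is a topological embedding, handling the finite-type direction of the iso claim, and handling the infinite-type direction, where I expect the main obstacle.

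For the first step, the universal property of the inverse limit applied to the compatible family $\{\vp_\lambda\}_{\lambda \in \Lambda}$ produces $\Phi$ as the unique continuous homomorphism satisfying $\pi_\lambda \circ \Phi = \vp_\lambda$ for every $\lambda$. Its kernel equals $\bigcap_\lambda \ker \vp_\lambda$, which is trivial by (the proof of) Lemma \ref{lem:Hausdorff}, so $\Phi$ is injective. Since $\tau_w$ is by definition the initial topology with respect to $\{\vp_\lambda\} = \{\pi_\lambda \circ \Phi\}$, it coincides with the pullback along $\Phi$ of the initial topology on $L(S)$; hence $\Phi$ is a homeomorphism onto its image and thus a topological embedding.

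For the second step, if $S$ is of finite type then $\Ends(S)$ is finite and therefore lies in $\Lambda$, where it is a maximum element of the directed set. Since $S_{\Ends(S)} = \overline{S} \ssm \Ends(S) = S$, the map $\vp_{\Ends(S)}$ is the identity on $\pmcg(S)$, and the projection $\pi_{\Ends(S)}\co L(S) \to \pmcg(S)$ is a continuous two-sided inverse of $\Phi$.

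For the third step, I need to produce, for $S$ of infinite type, a compatible family $(f_\lambda) \in L(S)$ not in the image of $\Phi$. The strategy is to engineer a \emph{formal infinite product} of Dehn twists about pairwise-intersecting curves. Using that $\Ends(S)$ is infinite, compact, totally disconnected, and Hausdorff, choose pairwise disjoint nonempty clopen subsets $A_1, A_2, \ldots \subset \Ends(S)$ each of size at least two, and for each $n \ge 2$ a separating simple closed curve $\alpha_n \subset S$ whose two sides carry end-sets $A_1 \cup A_n$ and its complement. For $\lambda \in \Lambda$, set
\[
f_\lambda = \prod_{n \ge 2} \vp_\lambda(T_{\alpha_n}),
\]
ordered by increasing $n$; only finitely many factors are nontrivial since $\alpha_n$ becomes peripheral or trivial in $S_\lambda$ whenever $A_n \cap \lambda$ fails to make both sides of $\alpha_n$ carry enough punctures, and compatibility under $\vp_{\lambda,\mu}$ is then routine. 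The main obstacle is ruling out a lift $f \in \pmcg(S)$: because every $\alpha_n$ separates off the common region carrying $A_1$, any two of them have positive geometric intersection, and I would argue (tracking geometric intersection numbers through the $\vp_\lambda$-compatibility using Lemma \ref{lem:intersection} together with the standard Dehn-twist intersection formula) that any candidate $f$ would force $i(f(\alpha_2), \alpha_n) > 0$ for infinitely many $n$, contradicting that $f(\alpha_2)$ is a single compact simple closed curve in $S$.
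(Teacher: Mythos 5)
Your first two steps are correct and essentially identical to the paper's: injectivity follows from Lemma \ref{lem:Hausdorff}, the embedding statement is immediate because $\tau_w$ is the initial topology with respect to $\{\pi_\lambda \circ \Phi\}$, and in the finite-type case $\Ends(S) \in \Lambda$ is a maximum so $L(S) = \pmcg(S)$.

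The third step has a genuine gap: the family $(f_\lambda)$ is not well defined. You claim that in $\prod_{n\ge 2}\vp_\lambda(T_{\alpha_n})$ only finitely many factors are nontrivial, the mechanism being that $\alpha_n$ dies in $S_\lambda$ once $\lambda$ misses $A_n$. But every $\alpha_n$ has the \emph{same} set $A_1$ on its ``small'' side, and that side alone can keep the curve essential. Concretely, take $\lambda$ containing two points of each of $A_1$, $A_2$, $A_3$. Then for every $n\ge 2$ the side of $i_\lambda(\alpha_n)$ carrying $A_1\cup A_n$ contains the two punctures coming from $\lambda\cap A_1$, and the other side contains the two punctures coming from $\lambda\cap A_2$ or $\lambda\cap A_3$ (at least one of these lies outside $A_1\cup A_n$ for each $n$). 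So $i_\lambda(\alpha_n)$ is essential in $S_\lambda$ and $\vp_\lambda(T_{\alpha_n})$ is a nontrivial Dehn twist for \emph{every} $n$; your formal product is a genuinely infinite product in the discrete group $\pmcg(S_\lambda)$ and defines no element. The pairwise intersection of the $\alpha_n$, which you need for non-realizability, is exactly what prevents the product from collapsing to a finite one. The paper avoids this tension by using a \emph{nested} (hence pairwise disjoint) sequence of curves $a_n$ in $S = \bc\ssm\bz$, each encircling the punctures $1,\dots,n+1$: for each fixed $\lambda$ the sequence $\vp_\lambda(T_{a_n})$ is \emph{eventually constant} (once $a_n$ and $a_{n+1}$ cut off the same subset of $\lambda$ they are isotopic in $S_\lambda$), so the coordinates $T_\lambda := \lim_n \vp_\lambda(T_{a_n})$ define a compatible point $\overline{T}\in L(S)$ with no product to make sense of. Non-realizability is then obtained by intersecting with an exhausting family of curves $b_n$ and applying the bigon/intersection formula to show $i(\overline T(b_1),b_n)>0$ for all $n$. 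If you want to salvage your approach you must either switch to such a ``stabilizing sequence'' construction or redesign the $\alpha_n$ so that, for each fixed $\lambda$, all but finitely many of them genuinely become inessential in $S_\lambda$ while the family still fails to be realized by a single homeomorphism.
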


\begin{proof}
Lemma \ref{lem:Hausdorff} implies that $\tau_w$ separates points and hence $\Phi$ is injective. 
It is clear from the definition that $\tau_w$ agrees with the subspace topology when $\pmcg(S)$ is viewed as a subset of $L(S)$.
Note that if $S$ is of finite type, then $L(S) = \pmcg(S)$, so for the remainder we may assume that $S$ is of infinite type.

\begin{figure}
\center
\includegraphics{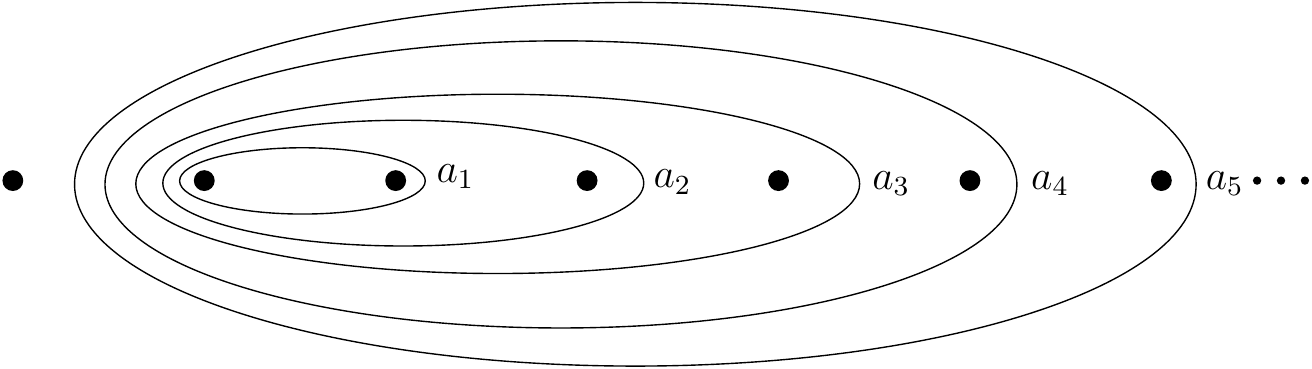}
\caption{Represented here is the surface $S=\bc\ssm\bz$.  The sequence of Dehn twists $\{T_{a_n}\}_{n\in\bn}$ converges in $L(S)$, but not in $(\pmcg(S), \tau_w)$.}
\label{fig:not-surjective}
\end{figure}

We now show that $\Phi$ fails to be onto.
We will show this in the case $S = \bc \ssm \bz$; the general case can be ascertained by replacing each integer in $\bz$ with a clopen set of $\Ends(S)$ in the general case.

For each $n \in \bn$, let $a_n$ be the boundary of a regular neighborhood of the line segment on the real axis connecting $1$ and $n+1$ as in Figure \ref{fig:not-surjective}. 
Let $T_{n}\in\pmcg(S)$ denote the Dehn twist about $a_n$.
For each $\lambda \in \Lambda$, the sequence $\{\vp_\lambda(T_n)\}$ is eventually constant, call this limit Dehn twist $T_\lambda \in \pmcg(S_\lambda)$.
It follows that the sequence $\{T_n\}_{n\in \bn}$ converges to $\overline T \in L(S)$, where $\overline T$ has coordinates $\vp_\lambda(\overline T) = T_\lambda$.

We claim $\overline T$ is not in $\pmcg(S)$; we proceed by contradiction.
Suppose $\overline T \in \pmcg(S)$ and conflate $\overline T$ with a representative homeomorphism.  
Let $b_n$ be the boundary of the ball centered at the origin with radius $n+\frac12$.
Using Lemma \ref{lem:intersection} and a similar argument as in the proof of Lemma \ref{lem:closure}, we see that $i(\overline T(b_1), b_n) > 0$.
It follows that $\overline T(b_1)$ cannot be compact, a contradiction.
\end{proof}

We point out the contrast between the pure homeomorphism group of $S$ being an inverse limit and the pure mapping class group failing to do so.
It is natural to ask if the topologies $\tau_w$ and $\tau_q$ agree as the compact-open topology and the initial topologies agree on the level of homeomorphism groups.
Proposition \ref{prop:containment} below tells us that this also fails to be the case (when $S$ is of infinite type).

Now recall that a topological group is \emph{Polish} if it is separable and completely metrizable.
Such groups have many nice properties and are studied extensively in descriptive set theory.
Of particular interest is the work of Rosendal \cite{RosendalCoarse}, which provides a theory of coarse geometry for nonlocally-compact Polish groups.

\begin{Prop}
\label{prop:polish}
$L(S)$ is Polish if and only if the cardinality of $\Ends(S)$ is countable. 
\end{Prop}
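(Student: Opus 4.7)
The plan is to handle the two directions separately and reduce everything to standard facts about inverse limits of countable discrete groups.

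For the forward direction, suppose $\Ends(S)$ is countable; then $\Lambda = \Lambda(S)$ is countable as well. Each $\pmcg(S_\lambda)$ is a finite-type pure mapping class group, hence finitely generated, and with the discrete topology it is a countable Polish group. The product $\prod_{\lambda \in \Lambda} \pmcg(S_\lambda)$ is therefore a countable product of Polish groups and is Polish, and $L(S)$ sits inside as the closed subgroup cut out by the compatibility relations $\vp_{\lambda,\mu}(x_\mu) = x_\lambda$ for $\lambda \subseteq \mu$. Since closed subgroups of Polish groups are Polish, this direction is immediate.

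For the reverse direction, assume $\Ends(S)$ is uncountable; I will show that $L(S)$ fails to be first countable at the identity, which rules out metrizability and hence Polishness. The basic open neighborhoods of the identity are the subgroups $\ker \pi_\lambda$ for $\lambda \in \Lambda$, so any candidate countable local basis at $e$ may be refined to a sequence $\{\ker \pi_{\mu_n}\}_{n \in \bn}$ with $\mu_n \in \Lambda$. Since $\bigcup_n \mu_n$ is countable, I can select two distinct ends $p, r \in \Ends(S) \setminus \bigcup_n \mu_n$ and put $\lambda = \{p, r\}$. The argument reduces to showing that the basic neighborhood $\ker \pi_\lambda$ contains none of the $\ker \pi_{\mu_n}$, which contradicts the basis property.

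To witness this, for each $n$ I exhibit an element in $\ker \pi_{\mu_n} \setminus \ker \pi_\lambda$. Take a simple closed curve $c \subset S$ bounding a disk in $\overline{S}$ whose intersection with $\Ends(S)$ is exactly $\{p, r\}$, and consider $\Phi(T_c) \in L(S)$. Since $p, r \notin \mu_n$, filling in $\Ends(S) \setminus \mu_n$ turns the disk bounded by $c$ into an honest disk in $S_{\mu_n}$, so $\pi_{\mu_n}(\Phi(T_c)) = e$; on the other hand, $c$ bounds a twice-punctured disk in $S_\lambda$, which is essential, so $\pi_\lambda(\Phi(T_c))$ is a nontrivial Dehn twist. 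The main place where care is needed is ensuring $c$ encloses two marked ends rather than one, so that the resulting curve is unambiguously essential in $S_\lambda$ regardless of any low-complexity pathologies for peripheral twists; this is exactly why I extract two points $p, r$ from the complement of $\bigcup_n \mu_n$ rather than just one.
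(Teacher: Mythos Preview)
Your proof is essentially correct and parallels the paper's argument, with one small gap in the reverse direction.

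For the forward direction you take a slightly more abstract route than the paper: rather than choosing a cofinal sequence $\{\lambda_n\}$ in $\Lambda$ and writing down an explicit complete metric on $L(S)$ (as the paper does), you observe that $L(S)$ is a closed subgroup of a countable product of countable discrete, hence Polish, groups. Both arguments are standard and equally short; yours avoids the auxiliary choice of cofinal sequence, while the paper's makes the metric visible.

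For the reverse direction your argument and the paper's (Lemma~\ref{lem:first-countable}) are nearly identical, but the paper chooses $|\mu| \geq 4$ whereas you take $|\lambda| = 2$. This matters when $\overline{S}$ is the $2$-sphere, i.e., when $S$ is planar and borderless. In that case $S_\lambda = \overline{S} \setminus \{p,r\}$ is a twice-punctured sphere, and your curve $c$ bounds a disk on \emph{both} sides, so $\pi_\lambda(\Phi(T_c))$ is trivial and the contradiction disappears. Your remark about ``low-complexity pathologies'' guards against $c$ being peripheral on the punctured side but overlooks the possibility that it is trivial from the complementary side. The fix is exactly the paper's: extract four ends from the complement of $\bigcup_n \mu_n$, set $\lambda$ equal to this $4$-element set, and let $c$ bound a disk in $\overline{S}$ meeting $\lambda$ in exactly two points; then the other side of $c$ in $S_\lambda$ carries at least two punctures, so $c$ is essential in $S_\lambda$ regardless of the topology of $\overline{S}$.
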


Before getting to the proof of Proposition \ref{prop:polish}, we separate out two lemmas that are interesting to us in their own right.
Observe that by the definition of the initial topology and the Dehn-Lickorish theorem, the first lemma is immediate.

\begin{Lem}
\label{lem:separable}
$L(S)$ is topologically generated by Dehn twists.
\end{Lem}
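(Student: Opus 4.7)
The plan is to exploit the fact that the topology on \( L(S) \) is the initial topology with respect to the projections \( \pi_\lambda \co L(S) \to \pmcg(S_\lambda) \), where each target carries the discrete topology. Since \( \Lambda \) is directed under finite union and the projections are coherent with the inverse system, a basic open neighborhood of any \( f \in L(S) \) can be reduced to one of the form \( \pi_\lambda^{-1}(\{\pi_\lambda(f)\}) \) for some single \( \lambda \in \Lambda \). To show that the Dehn twists (viewed inside \( L(S) \) as the images of Dehn twists of \( \pmcg(S) \) under \( \Phi \)) topologically generate, it therefore suffices to show that for every \( f \in L(S) \) and every \( \lambda \in \Lambda \), there is a product \( g \) of Dehn twists in \( \pmcg(S) \) with \( \vp_\lambda(g) = \pi_\lambda(f) \).

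The next step is to reduce this density problem to lifting a single Dehn twist. Since \( S_\lambda \) is finite type, the Dehn--Lickorish theorem lets us write \( \pi_\lambda(f) = T_{c_1}^{\epsilon_1} \cdots T_{c_k}^{\epsilon_k} \) for simple closed curves \( c_i \subset S_\lambda \). It is then enough to lift each \( T_{c_i} \) to a Dehn twist in \( \pmcg(S) \) whose image under \( \vp_\lambda \) is \( T_{c_i} \).

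The geometric heart of the argument is the lift itself: given a simple closed curve \( c \subset S_\lambda = \overline S \ssm \lambda \), I want a representative \( \tilde c \) of \( [c] \) lying entirely in \( S = \overline S \ssm \Ends(S) \). The only obstruction is the set \( \Ends(S) \ssm \lambda \), which sits inside \( S_\lambda \) as a closed, totally disconnected subset (being closed and totally disconnected in \( \overline S \)). A standard general-position perturbation allows a \( 1 \)-submanifold in the \( 2 \)-manifold \( S_\lambda \) to be isotoped off any such \( 0 \)-dimensional closed subset, producing the desired \( \tilde c \subset S \) isotopic to \( c \) in \( S_\lambda \). The Dehn twist \( T_{\tilde c} \in \pmcg(S) \) then satisfies \( \vp_\lambda(T_{\tilde c}) = T_c \).

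Combining these pieces: given \( f \in L(S) \) and \( \lambda \in \Lambda \), lift each factor in the Dehn--Lickorish decomposition of \( \pi_\lambda(f) \) to a Dehn twist in \( \pmcg(S) \) and take the corresponding product \( g \); then \( \Phi(g) \) lies in the basic open neighborhood \( \pi_\lambda^{-1}(\{\pi_\lambda(f)\}) \) of \( f \). This shows that the subgroup of \( L(S) \) generated by Dehn twists is dense, which is the claim. The only step with any substance is the general-position lift, but that is routine; everything else is formal manipulation of the initial topology together with Dehn--Lickorish.
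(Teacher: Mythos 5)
Your proof is correct and is exactly the argument the paper has in mind: the paper dismisses this lemma as immediate from the definition of the initial topology together with the fact that finite-type pure mapping class groups are generated by Dehn twists, and your write-up simply supplies the details (directedness of $\Lambda$ reducing a basic neighborhood to a single $\pi_\lambda^{-1}(\{\pi_\lambda(f)\})$, and isotoping curves in $S_\lambda$ off the totally disconnected set of forgotten ends so as to lift their twists to $\pmcg(S)$). The only cosmetic quibble is that for the punctured surfaces $S_\lambda$ the generation by Dehn twists requires the Dehn--Lickorish theorem together with the Birman exact sequence (as the paper notes in Section 5), not Dehn--Lickorish alone.
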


\begin{Lem}
\label{lem:first-countable}
If the cardinality of $\Ends(S)$ is uncountable, then $(\pmcg(S), \tau_w)$ is not first countable.  
\end{Lem}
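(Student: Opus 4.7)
Suppose for contradiction that $(\pmcg(S), \tau_w)$ is first countable. Since $\tau_w$ makes $\pmcg(S)$ a topological group, this gives a countable neighborhood base at the identity. Each $\vp_\lambda$ is continuous into the discrete group $\pmcg(S_\lambda)$, hence $\ker \vp_\lambda$ is open; moreover $\ker \vp_\lambda \cap \ker \vp_\mu = \ker \vp_{\lambda \cup \mu}$, so the kernels $\{\ker \vp_\lambda\}_{\lambda \in \Lambda}$ themselves form a neighborhood base at the identity, and I may assume the countable base has the form $\{\ker \vp_{\lambda_n}\}_{n \in \bn}$. Setting $\Lambda_0 = \bigcup_n \lambda_n$, this is countable, and the uncountability of $\Ends(S)$ yields two distinct ends $e_1, e_2 \in \Ends(S) \setminus \Lambda_0$. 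By the base property, the open neighborhood $\ker \vp_{\{e_1, e_2\}}$ of the identity must then contain some $\ker \vp_{\lambda_n}$, and I will contradict this for every $n$.

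For each $n$, since $\lambda_n \cup \{e_2\}$ is finite and does not contain $e_1$, the total disconnectedness of $\Ends(S)$ yields a clopen neighborhood $U_n \subset \Ends(S)$ of $e_1$ disjoint from $\lambda_n \cup \{e_2\}$. As $S$ has finite genus, standard end-space arguments produce a simple closed curve $c_n \subset S$ bounding a closed topological disk $D_n \subset \overline S$ with $D_n \cap \Ends(S) = U_n$; in particular $e_1 \in D_n$ and $D_n \cap (\lambda_n \cup \{e_2\}) = \emptyset$. In $S_{\lambda_n} = \overline S \ssm \lambda_n$, the set $D_n$ remains a disk, so $c_n$ bounds a disk in $S_{\lambda_n}$ and $T_{c_n} \in \ker \vp_{\lambda_n}$. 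In $S_{\{e_1, e_2\}}$, however, $c_n$ bounds the once-punctured disk $D_n \ssm \{e_1\}$, so $c_n$ is peripheral around the puncture $e_1$.

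The only nontrivial point left is to verify that the peripheral Dehn twist $T_{c_n}$ is nontrivial in $\pmcg(S_{\{e_1, e_2\}})$. This is a classical fact from finite-type mapping class group theory: a Dehn twist in the pure mapping class group is trivial only when its defining curve bounds a disk, and here $c_n$ bounds a once-punctured disk (not a disk) in $S_{\{e_1, e_2\}}$. Hence $T_{c_n} \notin \ker \vp_{\{e_1, e_2\}}$ for every $n$, contradicting $\ker \vp_{\lambda_n} \subseteq \ker \vp_{\{e_1, e_2\}}$. The main subtle step is ensuring the existence of the separating curve $c_n$, which requires both the finite-genus hypothesis (so that neighborhoods of ends are eventually planar) and the standard correspondence between clopen subsets of $\Ends(S)$ and separating simple closed curves.
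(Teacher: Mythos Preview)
Your proof has a genuine gap at precisely the step you flag as ``the only nontrivial point.'' You assert that a Dehn twist is trivial only when its curve bounds a disk, but this is false: a Dehn twist about a curve bounding a \emph{once-punctured} disk is also trivial. Indeed, the mapping class group of a once-punctured disk relative to its boundary is trivial (the Alexander trick on the filled-in disk with a marked point produces an isotopy fixing both the boundary and the marked point throughout). Since your curve $c_n$ bounds a once-punctured disk in $S_{\{e_1,e_2\}}$, the twist $T_{c_n}$ is the identity in $\pmcg(S_{\{e_1,e_2\}})$, and no contradiction arises. The failure is most transparent when $\overline S = S^2$: then $S_{\{e_1,e_2\}}$ is the twice-punctured sphere, whose pure mapping class group is trivial, so $\ker\vp_{\{e_1,e_2\}} = \pmcg(S)$ contains every $\ker\vp_{\lambda_n}$ trivially.

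The paper's proof follows the same overall strategy as yours --- reduce to a countable base of kernels $\ker\vp_{\lambda_n}$, pick a finite $\mu$ disjoint from $\bigcup_n \lambda_n$, and exhibit a Dehn twist in $\ker\vp_{\lambda_n}\setminus\ker\vp_\mu$ --- but takes $|\mu|\geq 4$ and arranges the separating disk $D$ to meet $\mu$ in exactly two points. Then $c=\partial D$ separates at least two punctures from at least two others in $S_\mu$, so $c$ is essential there and $T_c$ is nontrivial in $\pmcg(S_\mu)$. Your argument is repaired by this single change: use four ends outside $\Lambda_0$ rather than two.
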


The proof mimics the standard proof that an uncountable product of discrete topological spaces is not first countable.

\begin{proof}
Suppose that $\{U_n\}_{n\in \bn}$ is a countable neighborhood basis of the identity in $\pmcg(S)$.
For each $n \in \bn$ there exists a finite set $\Lambda_n  \subset \Lambda$ such that 
\[
V_n := \bigcap_{\lambda \in \Lambda_n} \ker \vp_\lambda \subset U_n.
\]
Therefore, $\{V_n\}_{n\in\bn}$ is a countable neighborhood basis for the identity.

Define $C \subset \Ends(S)$ such that $\lambda \subset C$ if and only if there exists $n\in \bn$ with $\lambda \in \Lambda_n$.
As $C$ is countable, we can find $\mu \in \Lambda$ such that $\mu \cap C = \emptyset$ and $|\mu|\geq 4$.
We claim that $\ker \vp_\mu$ is a neighborhood of the identity not containing any of  the $V_n$.
We proceed by contradiction: suppose that $V_n \subset \ker \vp_\mu$.
Pick a simple closed curve $c$ in $S$ bounding a disk $D$ in $\overline S$ disjoint from each $\lambda \in \Lambda_n$ and satisfying $|D\cap \mu| = 2$.
The Dehn twist $T_c$ about $c$ satisfies $T_c \in V_n$ and  $T_c \notin \ker\vp_\mu$.  Thus, there is no countable neighborhood base of the identity. 
\end{proof}

\begin{proof}[Proof of Proposition \ref{prop:polish}]
If $S$ is finite-type, then $L(S) = \pmcg(S)$ and has the discrete topology, so the result is immediate. 
Suppose now that $\Ends(S)$ is countably infinite.
Let $\{\lambda_n\}_{n\in \bn} \subset \Lambda$ be cofinal, so that
\[
L(S) = \varprojlim \pmcg(S_{\lambda_n}).
\]
We can then define the metric $d\co L(S) \to \br$ as follows: for any two elements $f,g \in L(S)$, set $d(f,g) = 1$ if $\pi_{\lambda_n}(f) \neq \pi_{\lambda_n}(g)$ for every $n \in \bn$, otherwise set
\[
d(f,g) = \min\{2^{-n} : \pi_{\lambda_n}(f) = \pi_{\lambda_n}(g)\}.
\]
It is a routine exercise to check that $d$ is in fact a complete metric and induces the initial topology on $L(S)$.
Lemma \ref{lem:separable} tells us that $L(S)$ is separable; hence, it is Polish.
To finish the proof, Lemma \ref{lem:first-countable} tells us that $L(S)$ is not metrizable if the cardinality of $\Ends(S)$ is uncountable. 
\end{proof}

\begin{Cor}\label{cor:baire}
If the cardinality of $\Ends(S)$ is countably infinite, then $L(S)$ is homeomorphic to the Baire space $\mathbb{N}^\mathbb{N}$.
\end{Cor}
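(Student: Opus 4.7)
The plan is to invoke the Alexandrov--Urysohn characterization of Baire space: a nonempty Polish space is homeomorphic to $\mathbb{N}^\mathbb{N}$ if and only if it is zero-dimensional and every compact subset has empty interior. Proposition \ref{prop:polish} already tells us that $L(S)$ is Polish under the countable-ends hypothesis, and $L(S)$ is obviously nonempty (it contains the identity), so only two properties remain to verify.

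For zero-dimensionality, I would observe that by definition the initial topology on $L(S)$ has a basis consisting of sets of the form $\pi_\lambda^{-1}(\{g\})$ with $\lambda \in \Lambda$ and $g \in \pmcg(S_\lambda)$. Since each $\pmcg(S_\lambda)$ carries the discrete topology, every singleton $\{g\}$ is clopen, hence each basic set is clopen in $L(S)$. Thus $L(S)$ has a basis of clopen sets.

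The main step is to show that every compact subset of $L(S)$ has empty interior. I would argue by contradiction: suppose $K\subset L(S)$ is compact and contains some basic open set $B=\pi_\lambda^{-1}(\{g\})$. Since $\Ends(S)$ is infinite we may choose $\mu\in \Lambda$ with $\mu\supsetneq \lambda$ and $|\mu\setminus\lambda|\geq 2$. Because $\pi_\mu$ is continuous and $\pmcg(S_\mu)$ is discrete, $\pi_\mu(K)$ must be finite. On the other hand, the forgetful maps $\vp_{\lambda,\mu}$ are surjective (by a standard extension argument; cf.\ Proposition \ref{prop:extension}), so the same holds for each $\pi_\mu$, giving $\pi_\mu(B)=\vp_{\lambda,\mu}^{-1}(\{g\})$. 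To reach a contradiction it then suffices to show that $\ker\vp_{\lambda,\mu}$ is infinite. For this, take any simple closed curve $c$ in $\overline{S}$ bounding a disk that contains exactly two points of $\mu\setminus\lambda$ and no points of $\lambda$; the Dehn twist $T_c$ is a nontrivial element of $\pmcg(S_\mu)$ (the disk is twice-punctured there) whose image in $\pmcg(S_\lambda)$ is trivial (the disk is unpunctured there), and $T_c$ has infinite order. Hence $\langle T_c\rangle\subset\ker\vp_{\lambda,\mu}$ is infinite, so every coset of $\ker\vp_{\lambda,\mu}$ is infinite, contradicting the finiteness of $\pi_\mu(K)\supseteq\pi_\mu(B)$.

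The main obstacle I anticipate is simply giving a crisp justification for the two supporting facts in the last paragraph, namely surjectivity of the transition maps $\vp_{\lambda,\mu}$ and infiniteness of their kernels; both are classical consequences of the Birman exact sequence (the point-pushing viewpoint), but they should be stated and cited carefully rather than hand-waved.
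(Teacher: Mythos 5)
Your proof is correct and follows essentially the same route as the paper's: both invoke the Alexandrov--Urysohn characterization, obtain zero-dimensionality from the clopen basis of the initial topology, and derive the contradiction by noting that a compact basic open set would project to a subset of a finer discrete factor $\pmcg(S_\mu)$ that is simultaneously finite (by compactness) and infinite (because the transition maps have infinite fibers). Your explicit Dehn-twist witness for the infinitude of $\ker\vp_{\lambda,\mu}$ merely fills in a detail the paper leaves implicit.
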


(Another realization of the Baire space is the space of irrational numbers as a subspace of the real line.)

\begin{proof}
First observe that $L(S)$ is a subspace of a direct product of discrete spaces; hence, it has a basis of clopen sets, so it is zero dimensional.
Given that $L(S)$ is zero-dimensional and Polish, to show that $L(S)$ is homeomorphic to the Baire space it is enough to show that every compact set has empty interior; this utilizes the topological characterization of the Baire space by Alexandrov-Urysohn (see \cite[Theorem 7.7]{KechrisClassical}).
As in the proof of Proposition \ref{prop:polish}, let $\{\lambda_n\}_{n\in\bn} \subset \Lambda$ be a cofinal subset.
With this set up, the sets of the form $\pi_n^{-1}(g)$, where $\pi_n = \pi_{\lambda_n}$ and $g\in \pmcg(S_{\lambda_n})$, give a basis for the topology of $L(S)$.

Let $C\subset L(S)$ be compact.
Assume that $C$ has nonempty interior, then there exists $n \in \mathbb{N}$ and $g\in \pmcg(S_{\lambda_n})$ such that $\pi_n^{-1}(g) \subset C$.
As $\pi_n^{-1}(g)$ is closed and $C$ is compact, we must have that $\pi_n^{-1}(g)$ is compact.  
But, then $\pi_{n+1}(\pi_n^{-1}(g))$ is compact as $\pi_{n+1}$ is continuous.
This is a contradiction as $\pi_{n+1}(\pi_n^{-1}(g))$ has infinite cardinality in a discrete space.
\end{proof}

Even when $\Ends(S)$ is uncountable, $L(S)$, as an inverse limit of discrete spaces, has the structure of a complete uniform space \cite[Exercise IV.8]{IsbellUniform}. Hence, in either case, \(L(S)\) can be viewed as the completion of $\pmcg(S)$.

\subsection{Comparing topologies}
\label{sec:comparing}

Recall from Lemma \ref{lem:first-countable} that if the cardinality of $\Ends(S)$ is uncountably infinite, then $(\pmcg(S),\tau_w)$ is not metrizable, so, in this case, $\tau_w \neq \tau_q$; moreover, we have:

\begin{Prop}
\label{prop:containment}
If $S$ is of finite type, then $\tau_w = \tau_q$; otherwise, there is a strict containment $\tau_w \subset \tau_q$.
\end{Prop}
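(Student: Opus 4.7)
The plan splits along the two claims. For the finite-type case, $\Ends(S)$ is finite, so $\lambda = \Ends(S)$ lies in $\Lambda$ and gives $S_\lambda = \overline S \ssm \Ends(S) = S$ with $\vp_\lambda = \mathrm{id}$. Since $\tau_w$ is the initial topology making each $\vp_\lambda$ continuous into a discrete target, this single $\lambda$ already forces $\tau_w$ to be discrete, matching $\tau_q$ (which is discrete for finite-type mapping class groups). So $\tau_w = \tau_q$ in this case.

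For the general containment $\tau_w \subseteq \tau_q$, I will show each forgetful map $\vp_\lambda \colon (\pmcg(S), \tau_q) \to \pmcg(S_\lambda)$ is continuous. Since the target is discrete and $(\pmcg(S), \tau_q)$ is a topological group, it suffices to produce a $\tau_q$-open neighborhood of the identity contained in $\ker \vp_\lambda$. Because $S$ has finite genus and $\lambda$ is finite, $S_\lambda$ is finite-type; the finite-type Alexander method then yields a finite collection of essential simple closed curves $c_1, \ldots, c_k$ in $S_\lambda$ whose simultaneous preservation forces a mapping class to be trivial, and these curves can be chosen to lie in $S \subset S_\lambda$. Fixing disjoint thin annular neighborhoods $N_i \subset S$ of $c_i$, the compact-open set $V = \bigcap_i \{\tilde f \in \homeo^+(S, \partial S) : \tilde f(c_i) \subset N_i\}$ descends (via the open group-quotient map $\homeo^+(S, \partial S) \to \pmcg(S)$) to a $\tau_q$-open neighborhood of the identity; any $\tilde f \in V$ sends each $c_i$ to a curve isotopic to $c_i$ in $N_i$, so its class lies in $\ker \vp_\lambda$.

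For strictness in the infinite-type case, I will construct a sequence of Dehn twists converging in $\tau_w$ but not in $\tau_q$, modeled on the example sketched just after Proposition~\ref{prop:closure2}. Since $\Ends(S)$ is an infinite compact Hausdorff zero-dimensional space, iterated bisection produces pairwise disjoint nonempty clopen subsets $W_0, W_1, W_2, \ldots \subset \Ends(S)$; finite genus of $S$ lets each such clopen subset be enclosed by a disk in $\overline S$ avoiding handles and boundary. Let $a$, $c$, and $a_n$ (for $n \geq 3$) be simple closed curves in $S$ bounding disks with $\Ends(S)$-intersections $W_0 \cup W_1$, $W_0 \cup W_1 \cup W_2$, and $W_0 \cup W_1 \cup W_n$ respectively. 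For any fixed $\lambda \in \Lambda$, disjointness of the $W_n$ and finiteness of $\lambda$ force $W_n \cap \lambda = \emptyset$ for all large $n$; at that point $\vp_\lambda(a_n)$ and $\vp_\lambda(a)$ bound disks in $S_\lambda$ with the same puncture set $\lambda \cap (W_0 \cup W_1)$ and hence are isotopic, giving $T_{a_n} \to T_a$ in $\tau_w$. Meanwhile $c$ is disjoint from $a$ (nested end-partitions) but crosses every $a_n$ essentially (non-nested partitions), so $T_a([c]) = [c] \neq T_{a_n}([c])$ for each $n \geq 3$. Openness of the quotient $\homeo^+(S, \partial S) \to \pmcg(S)$ would translate a hypothetical $\tau_q$-convergence $T_{a_n} \to T_a$ into compact-open convergence of representatives $\tilde T_{a_n} \to \tilde T_a$, forcing $\tilde T_{a_n}(c)$ into a tubular annulus of $c$ and hence $T_{a_n}([c]) = [c]$ eventually, a contradiction; Hausdorffness of $\tau_w$ (Lemma~\ref{lem:Hausdorff}) together with $\tau_w \subseteq \tau_q$ also rules out convergence to any other limit, so $\{T_{a_n}\}$ fails to converge in $\tau_q$. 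The main obstacle is realizing the prescribed clopen partitions of $\Ends(S)$ by disks in $\overline S$---this uses both the finite-genus hypothesis (so a planar region of $\overline S$ carries all of $\Ends(S)$) and the zero-dimensional compactness of the end space.
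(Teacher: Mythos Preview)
Your strictness argument contains a genuine gap. The assertion that in $S_\lambda$ the curves $a_n$ and $a$ ``bound disks with the same puncture set $\lambda \cap (W_0 \cup W_1)$ and hence are isotopic'' is false in general: already in a four-punctured sphere there are infinitely many isotopy classes of simple closed curves separating a fixed pair of punctures from the other pair. Since you specify $a$ and each $a_n$ only by the clopen set of ends the bounding disk meets, nothing in your construction forces $[a_n]_\lambda = [a]_\lambda$, and so the conclusion $\vp_\lambda(T_{a_n}) = \vp_\lambda(T_a)$ is unjustified. (The same sloppiness appears in ``$c$ is disjoint from $a$ (nested end-partitions)'': nested partitions allow the curves to be chosen disjoint, but do not make arbitrary representatives disjoint.)

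The repair is to choose the $a_n$ \emph{nested} with $a$: take $a_n$ so that $a$ and $a_n$ cobound an annular region $R_n \subset \overline S$ with $R_n \cap \Ends(S) = W_n$ (band $D_a$ to a small disk enclosing $W_n$ via an arc in $S$). Then whenever $\lambda \cap W_n = \emptyset$ the region $R_n$ is a genuine annulus in $S_\lambda$ and the isotopy $a_n \simeq a$ is immediate; the intersection argument with $c$ still goes through since the end-partitions of $a_n$ and $c$ remain non-nested. A smaller issue: your Alexander-method proof that $\ker\vp_\lambda$ is $\tau_q$-open uses only simple closed curves, but when $S$ has boundary a Dehn twist about a boundary component fixes every curve class while lying outside $\ker\vp_\lambda$; you would need arcs as well. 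The paper sidesteps this entirely by observing that $G \hookrightarrow G_\lambda \to \pmcg(S_\lambda)$ is continuous (the second map because finite-type mapping class groups are discrete), which immediately gives continuity of $q\colon G \to (\pmcg(S),\tau_w)$ and hence $\tau_w \subset \tau_q$.

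For comparison, the paper's strictness argument is rather different: it first disposes of the uncountable-ends case via non-metrizability (Lemma~\ref{lem:first-countable}), and in the countable case builds a \emph{net} $\{h_\lambda\}_{\lambda\in\Lambda}$ where each $h_\lambda$ is a Dehn twist about a curve trivial in $S_\lambda$ but filling a large compact piece, composed with a homeomorphism supported far out. Your sequence-of-Dehn-twists approach (once repaired) is closer in spirit to the example the paper sketches just after Proposition~\ref{prop:closure2}, and has the virtue of handling countable and uncountable $\Ends(S)$ uniformly.
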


\begin{proof}
We have already noted that when $S$ is of finite type both $\tau_q$ and $\tau_w$ are discrete.
Let us assume from now on that $S$ is of infinite type.
In the above notation, for $\lambda\in \Lambda$, let $\psi_\lambda\co G \to \pmcg(S_\lambda)$ be the homomorphism factoring through $G_\lambda$.
As $\psi_\lambda$ is continuous for each $\lambda\in \Lambda$, we have that the projection homomorphism $q\co G \to (\pmcg(S), \tau_w)$ is continuous. By the definition of $\tau_q$, this shows that $\tau_w\subseteq \tau_q$.

By the remark preceding the statement of Proposition \ref{prop:containment}, we may assume that the cardinality of $\Ends(S)$ is countable. 
To finish the proof, we will find a sequence that converges in $\tau_w$ but not in $\tau_q$.
Let $K_1 \subset K_2 \subset \cdots$ be an exhaustion of $S$ by essential finite-type surfaces each of whose genus agrees with $S$.
Choose a sequence of homeomorphisms $\tilde g_n \in G$ such that $\tilde g_n$ restricts to the identity on $K_n$, which implies that $\tilde g_n \to 1$ in $G$. 
If $g_n = q(\tilde g_n)$, then $g_n \to 1$ in $\tau_q$; hence, in $\tau_w$ as well.  

\begin{figure}
\center
\includegraphics{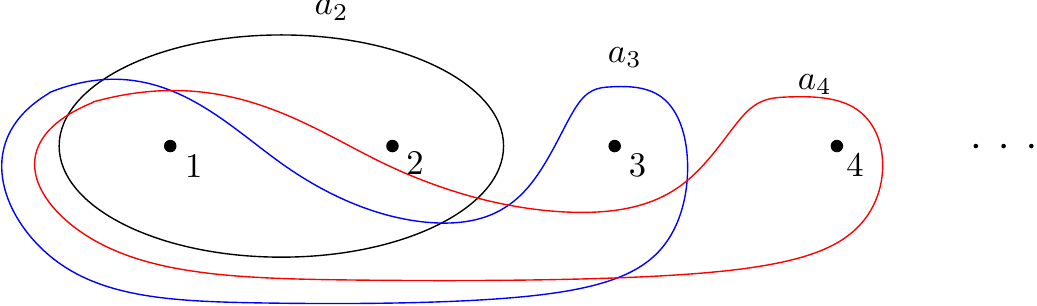}
\caption{The sequence of Dehn twists $\{T_{a_n}\}_{n=2}^\infty$ in $\pmcg(S)$, where $S = \bc\ssm\bn$, converges to the identity in $\tau_w$, but does not converge in $\tau_q$ to any homeomorphism.}
\label{fig:converge}
\end{figure}

For each $\lambda \in \Lambda$, let $n_\lambda$ be the largest integer $n$ such that $\lambda$ intersects each component of the complement of $K_{n}$ in $\overline S$.
In addition, for each $\lambda \in \Lambda$, choose an isotopy class of an essential simple closed curve in $S$, denoted by $c_\lambda$, such that $c_\lambda$ is trivial in $S_\lambda$ and $c_\lambda \cap K_{n_\lambda}$ fills $K_{n_\lambda}$.
Let $T_\lambda$ denote the Dehn twist about $c_\lambda$ and define $h_\lambda \in \pmcg(S)$ to be $T_\lambda\circ g_{n_\lambda}$. 

The map $\Lambda \to \pmcg(S)$ defined by $\lambda \mapsto h_\lambda$ gives a net in $\pmcg(S)$.
Now, the collection of sets of the form $U_\mu  = \ker \vp_\mu$ for $\mu \in \Lambda$ form a neighborhood subbasis for the identity in $\tau_w$.
If $\lambda \supset \mu$, then we claim $h_\lambda \in U_\mu$. 
First, observe that $K_{n_\mu}\subset K_{n_\lambda}$, so that $\tilde g_{n_\lambda}$ restricted to $K_{n_\mu}$ is the identity.
It follows that $\vp_\mu (g_{n_\lambda})$ is trivial.
Also,
\[
\vp_\mu(T_\lambda) = \vp_{\mu, \lambda}\circ \vp_\lambda(T_\lambda)
\]
is the identity as $T_\lambda \in \ker \vp_\lambda$ by definition. 
This shows that $h_\lambda \in U_\mu$ for all $\lambda \supset \mu$.
In particular, the net $\{h_\lambda\}_{\lambda \in \Lambda}$ converges to the identity in $\tau_w$.  

We claim that this net does not converge in $\tau_q$.
Let 
\[
U = \{ f \in \pmcg(S) : f = q(\tilde f) \text{ where } \tilde f(K_1) = K_1 \},
\]
then $U$ is an open neighborhood of the identity in $\tau_q$.
Indeed, let $V$ be an open regular neighborhood of $K_1$, then $U$ is the image, under $q$, of 
\[
U(K_1, V) = \{\tilde f \in G : \tilde f (K_1) \subset V\},
\]
which is a basis element for the compact-open topology on $G$.
Further, as $q$ is both a homomorphism and a quotient map, it is open; hence $U$ is open.
Now observe that $h_\lambda \notin U$ for any $\lambda$; this follows from the fact that given any simple closed curve $c \subset K_1$, we have $h_\lambda(c) = T_\lambda(c)$, which intersects $\partial K_1$ nontrivially.    
\end{proof}

\bibliographystyle{amsplain}
\bibliography{references}

\providecommand{\bysame}{\leavevmode\hbox to3em{\hrulefill}\thinspace}
\providecommand{\MR}{\relax\ifhmode\unskip\space\fi MR }
% \MRhref is called by the amsart/book/proc definition of \MR.
\providecommand{\MRhref}[2]{%
  \href{http://www.ams.org/mathscinet-getitem?mr=#1}{#2}
}
\providecommand{\href}[2]{#2}
\begin{thebibliography}{10}

\bibitem{AhlforsRiemann}
Lars~V. Ahlfors and Leo Sario, \emph{Riemann surfaces}, Princeton Mathematical
  Series, No. 26, Princeton University Press, Princeton, N.J., 1960.

\bibitem{AramayonaAsymptotic}
J.~{Aramayona} and L.~{Funar}, \emph{{Asymptotic mapping class groups of closed
  surfaces punctured along Cantor sets}}, Preprint at arxiv.org/abs/1701.08132
  (2017).

\bibitem{AramayonaArc}
Javier Aramayona, Ariadna Fossas, and Hugo Parlier, \emph{Arc and curve graphs
  for infinite-type surfaces}, Proc. of the AMS, to appear (2015).

\bibitem{AramayonaHomomorphisms}
Javier Aramayona and Juan Souto, \emph{Homomorphisms between mapping class
  groups}, Geom. Topol. \textbf{16} (2012), no.~4, 2285--2341.

\bibitem{ArensTopologies}
Richard Arens, \emph{Topologies for homeomorphism groups}, Amer. J. Math.
  \textbf{68} (1946), 593--610.

\bibitem{BavardIsomorphism}
J.~{Bavard}, S.~{Dowdall}, and K.~{Rafi}, \emph{{Isomorphisms between big
  mapping class groups}}, Preprint available at
  https://arxiv.org/abs/1708.08383 (2017).

\bibitem{BavardHyperbolic}
Juliette Bavard, \emph{Hyperbolicit{\'e} du graphe des rayons et
  quasi-morphismes sur un gros groupe modulaire}, Geom. Topol. \textbf{20}
  (2016), no.~1, 491--535. \MR{3470720}

\bibitem{BeguinConstruction}
Fran\c{c}ois B\'eguin, Sylvain Crovisier, and Fr\'ed\'eric Le~Roux,
  \emph{Construction of curious minimal uniquely ergodic homeomorphisms on
  manifolds: the {D}enjoy-{R}ees technique}, Ann. Sci. \'Ecole Norm. Sup. (4)
  \textbf{40} (2007), no.~2, 251--308.

\bibitem{ParisFinite}
A.~J. Berrick, V.~Gebhardt, and L.~Paris, \emph{Finite index subgroups of
  mapping class groups}, Proc. Lond. Math. Soc. (3) \textbf{108} (2014), no.~3,
  575--599.

\bibitem{Birkhoff}
Garrett Birkhoff, \emph{A note on topological groups}, Compositio Math.
  \textbf{3} (1936), 427--430.

\bibitem{BirmanAbelian}
Joan~S. Birman, Alex Lubotzky, and John McCarthy, \emph{Abelian and solvable
  subgroups of the mapping class groups}, Duke Math. J. \textbf{50} (1983),
  no.~4, 1107--1120.

\bibitem{CalegariCircular}
Danny Calegari, \emph{Circular groups, planar groups, and the {E}uler class},
  Proceedings of the {C}asson {F}est, Geom. Topol. Monogr., vol.~7, Geom.
  Topol. Publ., Coventry, 2004, pp.~431--491.

\bibitem{CalegariBig2}
\bysame, \emph{Mapping class groups: the next generation}, Geometry and the
  imagination,
  https://lamington.wordpress.com/2014/10/24/mapping-class-groups-the-next-generation/,
  2009.

\bibitem{CantwellEndperiodic}
John Cantwell and Lawrence Conlon, \emph{Endperiodic automorphisms of surfaces
  and foliations}, Preprint at https://arxiv.org/abs/1006.4525 (2010).

\bibitem{Castel}
Fabrice Castel, \emph{Geometric representations of the braid groups},
  Ast\'erisque (2016), no.~378, vi+175.

\bibitem{Primer}
Benson Farb and Dan Margalit, \emph{A primer on mapping class groups},
  Princeton Mathematical Series, vol.~49, Princeton University Press,
  Princeton, NJ, 2012.

\bibitem{FunarUniversal}
L.~Funar and C.~Kapoudjian, \emph{On a universal mapping class group of genus
  zero}, Geom. Funct. Anal. \textbf{14} (2004), no.~5, 965--1012.

\bibitem{FunarBraided}
Louis Funar and Christophe Kapoudjian, \emph{The braided {P}tolemy-{T}hompson
  group is finitely presented}, Geom. Topol. \textbf{12} (2008), no.~1,
  475--530.

\bibitem{DurhamGraphs}
M.~{Gentry Durham}, F.~{Fanoni}, and N.~G. {Vlamis}, \emph{{Graphs of curves on
  infinite-type surfaces with mapping class group actions}}, Preprint at
  https://arxiv.org/abs/1611.00841 (2016).

\bibitem{GrossmanResidual}
Edna~K. Grossman, \emph{On the residual finiteness of certain mapping class
  groups}, J. London Math. Soc. (2) \textbf{9} (1974/75), 160--164.

\bibitem{HarerVirtual}
John~L. Harer, \emph{The virtual cohomological dimension of the mapping class
  group of an orientable surface}, Invent. Math. \textbf{84} (1986), no.~1,
  157--176.

\bibitem{Hernandez2}
J.~{Hern{\'a}ndez Hern{\'a}ndez}, I.~{Morales}, and F.~{Valdez},
  \emph{{Isomorphisms between curve graphs of infinite-type surfaces are
  geometric}}, Preprint at https://arxiv.org/abs/1706.03697 (2017).

\bibitem{Hernandez}
\bysame, \emph{{The Alexander method for infinite-type surfaces}}, Preprint at
  https://arxiv.org/abs/1703.00407 (2017).

\bibitem{IsbellUniform}
J.~R. Isbell, \emph{Uniform spaces}, Mathematical Surveys, No. 12, American
  Mathematical Society, Providence, R.I., 1964.

\bibitem{IvanovAutomorphismsT}
N.~V. Ivanov, \emph{Automorphisms of {T}eichm{\"u}ller modular groups},
  Topology and geometry---{R}ohlin {S}eminar, Lecture Notes in Math., vol.
  1346, Springer, Berlin, 1988, pp.~199--270.

\bibitem{Kakutani}
Shizuo Kakutani, \emph{\"uber die {M}etrisation der topologischen {G}ruppen},
  Proc. Imp. Acad. \textbf{12} (1936), no.~4, 82--84.

\bibitem{KechrisClassical}
Alexander~S. Kechris, \emph{Classical descriptive set theory}, Graduate Texts
  in Mathematics, vol. 156, Springer-Verlag, New York, 1995.

\bibitem{KirbyProblems}
Rob Kirby, \emph{Problems in low dimensional manifold theory}, Algebraic and
  geometric topology ({P}roc. {S}ympos. {P}ure {M}ath., {S}tanford {U}niv.,
  {S}tanford, {C}alif., 1976), {P}art 2, Proc. Sympos. Pure Math., XXXII, Amer.
  Math. Soc., Providence, R.I., 1978, pp.~273--312.

\bibitem{McKenzieAutomorphism}
Ralph McKenzie, \emph{Automorphism groups of denumerable {B}oolean algebras},
  Canad. J. Math. \textbf{29} (1977), no.~3, 466--471.

\bibitem{MonkAutomorphism}
J.~Donald Monk, \emph{On the automorphism groups of denumerable {B}oolean
  algebras}, Math. Ann. \textbf{216} (1975), 5--10.

\bibitem{ParisSmall}
Luis Paris, \emph{Small index subgroups of the mapping class group}, J. Group
  Theory \textbf{13} (2010), no.~4, 613--618.

\bibitem{PowellTwo}
Jerome Powell, \emph{Two theorems on the mapping class group of a surface},
  Proc. Amer. Math. Soc. \textbf{68} (1978), no.~3, 347--350.

\bibitem{PutmanAbelian}
A.~Putman and B.~Wieland, \emph{Abelian quotients of subgroups of the mappings
  class group and higher {P}rym representations}, J. Lond. Math. Soc. (2)
  \textbf{88} (2013), no.~1, 79--96.

\bibitem{RichardsClassification}
Ian Richards, \emph{On the classification of noncompact surfaces}, Trans. Amer.
  Math. Soc. \textbf{106} (1963), 259--269.

\bibitem{RosendalCoarse}
Christian Rosendal, \emph{Coarse geometry of topological groups}, Preprint at
  \url{http://homepages.math.uic.edu/~rosendal/PapersWebsite/Coarse-Geometry-Book17.pdf}
  (2016).

\end{thebibliography}

\end{document}